\newtheorem{theorem}{Theorem}[section]
\newtheorem{lemma}[theorem]{Lemma}
\newtheorem{proposition}[theorem]{Proposition}
\newtheorem{corollary}[theorem]{Corollary}
\theoremstyle{definition}
\newtheorem{definition}[theorem]{Definition}
\theoremstyle{remark}
\newtheorem{remark}[theorem]{Remark}
\begin{document}

\title{Harmonic Bloch Space on the Real Hyperbolic Ball}

\thanks{This research is supported by Eski\c{s}ehir Technical University
Research Fund under grant 23ADP054.}

\author{A. Ers$\dot{\hbox{\i}}$n \"Ureyen}
\address{Department of Mathematics, Faculty of Science,
Eski\c{s}ehir Technical University, 26470, Eski\c{s}ehir,
Turkey}
\email{aeureyen@eskisehir.edu.tr}

\date{\today}

\subjclass[2010]{Primary 31C05; Secondary 46E22}

\keywords{real hyperbolic ball, hyperbolic harmonic function,
Bloch space, Bergman projection, atomic decomposition}

\begin{abstract}
We study the Bloch and the little Bloch spaces of harmonic
functions on the real hyperbolic ball.
We show that the Bergman projections from $L^\infty(\mathbb B)$ to
$\mathcal B$, and from $C_0(\mathbb B)$ to $\mathcal B_0$ are onto.
We verify that the dual space of the hyperbolic harmonic Bergman
space $\mathcal B^1_\alpha$ is $\mathcal B$ and its predual
is $\mathcal B_0$.
Finally, we obtain an atomic decomposition of Bloch functions
as a series of Bergman reproducing kernels.
\end{abstract}

\maketitle

\section{Introduction}

For $n\geq 2$ and $x,y\in\mathbb R^n$, let
$\langle x,y\rangle=x_1y_1+\dots+x_ny_n$ be the Euclidean inner
product and $\lvert x\rvert=\sqrt{\langle x,x\rangle}$ the
corresponding norm.
Let $\mathbb B=\mathbb B_n=\{x\in\mathbb R^n : \lvert x\rvert <1\}$
be the unit ball and $\mathbb S=\partial\mathbb B$ the unit sphere.

The hyperbolic ball is $\mathbb B$ equipped with the hyperbolic metric
\begin{equation*}
ds^2=\frac{4}{(1-\lvert x\rvert^2)^2}\sum_{i=1}^n dx_i^2.
\end{equation*}
For a $C^2$ function $f$, the hyperbolic (invariant) Laplacian
$\Delta_h$ is defined by
\begin{equation*}
\Delta_h f(a)=\Delta(f\circ\varphi_a)(0)
\qquad (a\in\mathbb B),
\end{equation*}
where $\Delta=\partial^2/\partial x_1^2+\dots+\partial^2/\partial x_n^2$
is the Euclidean Laplacian and $\varphi_a$ is the involutory M\"obius
transformation given in \eqref{definevarphia} that exchanges $a$ and $0$.
Up to a factor $1/4$, $\Delta_h$ is the Laplace-Beltrami operator
associated with the hyperbolic metric.
A straightforward calculation shows
\begin{equation*}
\Delta_hf(a)=(1-\lvert a\rvert^2)^2\Delta f(a)
+2(n-2)(1-\lvert a\rvert^2)\langle a,\nabla f(a)\rangle,
\end{equation*}
where
$\nabla=\big(\partial/\partial x_1,\dots,\partial/\partial x_n\big)$
is the Euclidean gradient.
We refer the reader to \cite[Chapter 3]{St1} for details.

A $C^2$ function $f\colon\mathbb B\to\mathbb C$ is called
hyperbolic harmonic or $\mathcal H$-harmonic on $\mathbb B$
if $\Delta_h f(x)=0$ for all $x\in\mathbb B$.
We denote by $\mathcal H(\mathbb B)$ the space of all
$\mathcal H$-harmonic functions equipped with the topology of
uniform convergence on compact subsets.

Let $\nu$ be the Lebesgue measure on $\mathbb B$ normalized so that
$\nu(\mathbb B)=1$ and for $\alpha>-1$, let
$d\nu_\alpha(x)=(1-\lvert x\rvert^2)^\alpha d\nu(x)$.
For $0<p<\infty$, denote the Lebesgue classes with respect to
$d\nu_\alpha$ by $L^p_\alpha(\mathbb B)$.
The $\mathcal H$-harmonic weighted Bergman space
$\mathcal B^p_\alpha$ is the subspace
$L^p_\alpha(\mathbb B)\cap\mathcal H(\mathbb B)$.
When $p=2$, $\mathcal B^2_\alpha$ is a reproducing kernel Hilbert space
and for each $x\in\mathbb B$, there exists
$\mathcal R_\alpha(x,\cdot)\in\mathcal B^2_\alpha$ such that
\begin{equation}\label{Reproduce}
f(x)=\int_{\mathbb B} f(y)\overline{\mathcal R_\alpha(x,y)}\,d\nu_\alpha(y)
\qquad (f\in \mathcal B^2_\alpha).
\end{equation}
The reproducing kernel $\mathcal R_\alpha$ is real-valued and the
conjugation above can be deleted.
$\mathcal R_\alpha(x,y)=\mathcal R_\alpha(y,x)$ and so $\mathcal R_\alpha$
is $\mathcal H$-harmonic as a function of each variable.
We refer the reader to \cite{Sou} and \cite[Chapter 10]{St1} for details.

For $\alpha>-1$ and $\phi\in L^1_\alpha$, the Bergman projection
operator $P_\alpha$ is defined by
\begin{equation*}
P_\alpha\phi(x)=\int_{\mathbb B} \mathcal R_\alpha(x,y)\phi(y)\,d\nu_\alpha(y).
\end{equation*}
In \cite{U1}, estimates for the reproducing kernels have been obtained,
and it is shown that $P_\gamma:L^p_\alpha\to\mathcal B^p_\alpha$ is
bounded if and only $\alpha+1<p(\gamma+1)$.
Further properties of the spaces $\mathcal B^p_\alpha$ including their
atomic decomposition have been obtained in \cite{U2}.

The purpose of this paper is to consider the $p=\infty$, i.e., the
Bloch space, case.
For a $C^1$ function $f$, the hyperbolic gradient $\nabla^h$ is defined by
\begin{equation*}
\nabla^h f(a)=-\nabla(f\circ\varphi_a)(0)=(1-\lvert a\rvert^2)\nabla f(a).
\end{equation*}
The $\mathcal H$-harmonic Bloch space $\mathcal B$ consists of all
$f\in\mathcal H(\mathbb B)$ such that
\begin{equation}\label{defineBloch}
p_{\mathcal B}(f)
=\sup_{x\in\mathbb B}\lvert\nabla^h f(x)\rvert
=\sup_{x\in\mathbb B}(1-\lvert x\rvert^2)\lvert\nabla f(x)\rvert<\infty.
\end{equation}
$p_{\mathcal B}$ is a seminorm and
$\|f\|_{\mathcal B}=\lvert f(0)\rvert+p_{\mathcal B}(f)$ is a norm
on $\mathcal B$.
The little Bloch space $\mathcal B_0$ is the subspace
consisting of functions $f$ with
$\lim_{\lvert x\rvert\to 1^-}
\lvert(1-\lvert x\rvert^2)\lvert\nabla f(x)\rvert=0$.

The properties we state below for the $\mathcal H$-harmonic Bloch space
$\mathcal B$ are similar to the holomorphic or the harmonic case.
However, we would like to point out that there are differences between
these and the $\mathcal H$-harmonic case.
For example, it is well known that polynomials are dense in the holomorphic
little Bloch space and similarly harmonic polynomials are dense
in the harmonic little Bloch space.
However, this is not true in the $\mathcal H$-harmonic case.
In fact, when the dimension $n$ is odd, there are not any non-constant
$\mathcal H$-harmonic polynomials (see Lemma \ref{Lpolyhar}).
Besides, some elementary properties of harmonic (or holomorphic)
functions do not hold for $\mathcal H$-harmonic functions.
For example, if $f$ is harmonic, then the partial derivative
$\partial f/\partial x_i$ and the dilation $f_r(x)=f(rx)$
are also harmonic.
However, neither of these are true for $\mathcal H$-harmonic functions.
Therefore, even if the final results are similar, some of the proofs
in the harmonic (or holomorphic) case do not directly carry over to
the $\mathcal H$-harmonic case.

Our first result is about projections onto $\mathcal B$ and $\mathcal B_0$.
Let $L^\infty(\mathbb B)$ be the Lebesgue space of essentially bounded
functions, $C(\overline{\mathbb B})$ be the space of functions continuous
on $\overline{\mathbb B}$, and $C_0(\mathbb B)$ be its subspace consisting
of functions vanishing on $\partial\mathbb B$.

\begin{theorem}\label{Tproj}
For every $\alpha>-1$, $P_\alpha$ maps $L^\infty(\mathbb B)$ boundedly
onto $\mathcal B$.
It also maps $C(\overline{\mathbb B})$ and $C_0(\mathbb B)$ boundedly
onto $\mathcal B_0$.
\end{theorem}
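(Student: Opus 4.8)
The plan is to establish boundedness and surjectivity of $P_\alpha$ separately, deducing the little Bloch statements by localizing each step. For boundedness onto $\mathcal B$, I would take the pointwise size estimates for $\mathcal R_\alpha(x,y)$ from \cite{U1} and, by differentiating its explicit series, upgrade them to a bound for $\nabla_x\mathcal R_\alpha(x,y)$ that gains one power of the distance-type quantity governing $\mathcal R_\alpha$; this sharp form is essential, since the crude interior gradient estimate for $\mathcal H$-harmonic functions loses a logarithm and would not even place $P_\alpha\phi$ in $\mathcal B$. These estimates license differentiation under the integral sign, so $P_\alpha\phi\in\mathcal H(\mathbb B)$ and $\nabla^h P_\alpha\phi(x)=(1-\lvert x\rvert^2)\int_{\mathbb B}\nabla_x\mathcal R_\alpha(x,y)\phi(y)\,d\nu_\alpha(y)$, while a Forelli--Rudin type estimate gives $(1-\lvert x\rvert^2)\int_{\mathbb B}\lvert\nabla_x\mathcal R_\alpha(x,y)\rvert\,d\nu_\alpha(y)\le C$ uniformly in $x$, whence $p_{\mathcal B}(P_\alpha\phi)\le C\|\phi\|_\infty$. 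For the little Bloch part, $1\in\mathcal B^2_\alpha$ gives $P_\alpha 1=1$, hence $\int_{\mathbb B}\nabla_x\mathcal R_\alpha(x,y)\,d\nu_\alpha(y)=0$. Given $\phi\in C(\overline{\mathbb B})$ and $\delta>0$, I would split the integral for $\nabla^h P_\alpha\phi(x)$ over the hyperbolic ball of radius $\delta$ about $x$ and its complement, write $\phi(y)=(\phi(y)-\phi(x/\lvert x\rvert))+\phi(x/\lvert x\rvert)$ on the first piece, and use the vanishing of $\int\nabla_x\mathcal R_\alpha\,d\nu_\alpha$ on the constant term, uniform continuity of $\phi$ on the difference term, and the kernel bound times $(1-\lvert x\rvert^2)$ on the complement; letting $\lvert x\rvert\to1$ and then $\delta\to0$ shows $P_\alpha\phi\in\mathcal B_0$. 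The same computation with boundary value $0$ gives $P_\alpha(C_0(\mathbb B))\subseteq\mathcal B_0$, and boundedness into $\mathcal B_0$ is automatic since $\mathcal B_0$ is closed in $\mathcal B$.

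The heart of the proof is surjectivity onto $\mathcal B$. Because neither $\partial_i$ nor dilation preserves $\mathcal H$-harmonicity, the familiar device of approximating a Bloch function by its dilations is unavailable, so I would instead build a bounded symbol directly from $\nabla^h f$. For $f\in\mathcal B$ write $Rf(y)=\langle y,\nabla f(y)\rangle$ for the radial derivative; then $(1-\lvert y\rvert^2)\lvert Rf(y)\rvert\le\lvert y\rvert\,\lvert\nabla^h f(y)\rvert\le p_{\mathcal B}(f)$, so $\psi_f:=c_{n,\alpha}\,(1-\lvert\cdot\rvert^2)Rf\in L^\infty(\mathbb B)$ with $\|\psi_f\|_\infty\lesssim p_{\mathcal B}(f)$. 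The claim is that for a suitable $c_{n,\alpha}$ one has $P_\alpha\psi_f=f-\lambda(f)$, where $\lambda(f)$ is a scalar depending linearly and boundedly on $f$; since $P_\alpha 1=1$, this gives $f=P_\alpha(\psi_f+\lambda(f))$ with bounded symbol, hence surjectivity. To prove the claim, integrating $\langle x,\nabla f(tx)\rangle$ in $t$ over $(0,1)$ (which recovers $f(x)-f(0)$) yields the growth bound $\lvert f(x)\rvert\le\|f\|_{\mathcal B}\bigl(1+C\log\frac{1}{1-\lvert x\rvert^2}\bigr)$, so $f\in L^1_{\alpha'}$ for every $\alpha'>-1$, and taking $\alpha'<\alpha$ we may assume $\mathcal R_\alpha$ reproduces $f$ (Bergman theory, \cite{U1,U2}). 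This growth together with $\mathcal R_\alpha(x,\cdot)\in L^2_\alpha$ justifies writing $P_\alpha\psi_f(x)=c_{n,\alpha}\int_{\mathbb B}\mathcal R_\alpha(x,y)(1-\lvert y\rvert^2)^{\alpha+1}Rf(y)\,d\nu(y)$ and integrating by parts in the radial variable; the boundary terms at $\lvert y\rvert=1$ and $y=0$ vanish because $\alpha+1>0$, and one obtains $P_\alpha\psi_f(x)=-c_{n,\alpha}\int_{\mathbb B}f(y)\,\mathcal K(x,y)\,d\nu_\alpha(y)$ with $\mathcal K(x,y)=(1-\lvert y\rvert^2)R_y\mathcal R_\alpha(x,y)+\bigl(n(1-\lvert y\rvert^2)-2(\alpha+1)\lvert y\rvert^2\bigr)\mathcal R_\alpha(x,y)$.

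It then remains to prove the differential recurrence $\mathcal K(x,y)=-c_{n,\alpha}^{-1}\mathcal R_\alpha(x,y)+\mathcal K_0(x,y)$ in which $\int_{\mathbb B}f(y)\mathcal K_0(x,y)\,d\nu_\alpha(y)$ collapses to a number $\lambda(f)$ independent of $x$ (for instance a multiple of $f(0)$), after which the reproducing property closes the argument. This recurrence is the main obstacle. In the holomorphic case its analogue is the trivial $R_w(1-\langle z,w\rangle)^{-t}=t\langle z,w\rangle(1-\langle z,w\rangle)^{-t-1}$, whereas here $\mathcal R_\alpha$ is a genuine hypergeometric/Gegenbauer-type series in $\lvert x\rvert^2$, $\lvert y\rvert^2$ and $\langle x,y\rangle$, so one must extract the recurrence from contiguous-function relations for that series and then verify that the remainder $\mathcal K_0$ pairs with $f$ to give a constant. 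Should a clean recurrence be hard to exhibit, an alternative is to pass first to a large index $\beta$, use $P_\beta f=f$, and convert $\mathcal R_\beta$ into $\mathcal R_\alpha$ by a two-parameter Forelli--Rudin identity, reading off the bounded symbol from the resulting iterated integral; this route is close to the atomic-decomposition machinery of \cite{U2}.

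Finally, for surjectivity onto $\mathcal B_0$: if $f\in\mathcal B_0$ then $\lvert\nabla^h f(y)\rvert\to0$ as $\lvert y\rvert\to1$, so $\psi_f$ is continuous on $\mathbb B$ and extends continuously to $\overline{\mathbb B}$ with boundary value $0$, i.e. $\psi_f\in C_0(\mathbb B)$. To also realize the constant $\lambda(f)$ with a symbol in $C_0(\mathbb B)$, note that by rotation invariance of $\mathcal R_\alpha$, $P_\alpha$ of a radial function in $C_0(\mathbb B)$ is a radial $\mathcal H$-harmonic function, hence a combination of $1$ and the unique (up to scalar) nonconstant radial $\mathcal H$-harmonic function; imposing two linear conditions on the infinite-dimensional family of radial symbols, one finds $\chi\in C_0(\mathbb B)$ with $P_\alpha\chi\equiv1$. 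Then $f=P_\alpha(\psi_f+\lambda(f)\chi)$ with $\psi_f+\lambda(f)\chi\in C_0(\mathbb B)\subseteq C(\overline{\mathbb B})$, which gives that $P_\alpha$ maps both $C(\overline{\mathbb B})$ and $C_0(\mathbb B)$ onto $\mathcal B_0$ and completes the proof.
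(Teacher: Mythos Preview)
Your boundedness arguments are essentially the paper's (Lemma~\ref{LEstR}(b) plus Lemma~\ref{LIntonSandB}), and your route to $P_\alpha(C(\overline{\mathbb B}))\subset\mathcal B_0$, while different from the paper's Stone--Weierstrass argument, is plausible.

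The surjectivity argument, however, has a genuine gap: the identity $P_\alpha\psi_f=f-\lambda(f)$ with $\lambda(f)$ a \emph{scalar} is false. Carry out your integration by parts term by term in the expansion
$\mathcal R_\alpha(x,y)=\sum_m c_m(\alpha)S_m(\lvert x\rvert)S_m(\lvert y\rvert)Z_m(x,y)$.
Writing $f=\sum_m S_m(\lvert x\rvert)q_m(x)$, one finds that the $m$-th coefficient of $P_\alpha\psi_f$ (with $c_{n,\alpha}=1/(\alpha+1)$) equals
\[
1-\frac{n/2+\alpha+1}{\alpha+1}\cdot\frac{c_m(\alpha)}{c_m(\alpha+1)}
\]
times $S_m(\lvert x\rvert)q_m(x)$. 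The ratio $c_m(\alpha)/c_m(\alpha+1)$ is \emph{not} constant in $m$: it equals $(\alpha+1)/(n/2+\alpha+1)$ at $m=0$ (so the $m=0$ term vanishes, as it must) but tends to $0$ as $m\to\infty$ by \eqref{cmasym}. Hence the ``remainder'' $\mathcal K_0$ does not pair with $f$ to a constant; what your computation actually produces is
\[
P_\alpha\psi_f \;=\; f \;-\; \tfrac{n/2+\alpha+1}{\alpha+1}\,D^{-1}_{\alpha+1}f,
\]
a fractional \emph{integral} of $f$, not $f$ minus a number. No contiguous-function miracle repairs this, because the obstruction is precisely that the Bergman coefficients $c_m(\alpha)$ are only known asymptotically, not by a closed formula.

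This is exactly why the paper introduces the operators $D^t_s$ with multipliers $c_m(s+t)/c_m(s)$ tailored to the $\mathcal H$-harmonic kernel: then the bounded symbol is $\phi(x)=(1-\lvert x\rvert^2)^tD^t_\alpha f(x)$, Proposition~\ref{PDstchar} shows $\phi\in L^\infty$ (resp.\ $C_0$) when $f\in\mathcal B$ (resp.\ $\mathcal B_0$), and $P_\alpha\phi=f$ holds \emph{exactly} by the algebraic inverse relation $D^{-t}_{\alpha+t}D^t_\alpha=\mathrm{Id}$ together with Lemma~\ref{LDstInt}. Your suggested fallback (``a two-parameter Forelli--Rudin identity'') is, in this setting, nothing other than the relation $D^t_\alpha\mathcal R_\alpha=\mathcal R_{\alpha+t}$; there is no closed integral identity to substitute for it because $\mathcal R_\alpha$ has no closed form.
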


It has already been verified in \cite[Theorem 1.5]{U1} that
$P_\alpha\colon L^\infty(\mathbb B)\to\mathcal B$ is bounded and the main
aspect of the above theorem is the surjectivity.
To achieve this we first characterize $\mathcal B$ and $\mathcal B_0$ in
terms of certain fractional differential operators that are defined in
Section \ref{SDiffOp}.
These operators are compatible with $\mathcal H$-harmonic functions and
the reproducing kernels, and to understand the properties of $\mathcal B$,
they are more adequate than $\nabla^h$ or $\nabla$ used
in \eqref{defineBloch}.

We next consider the duality problem.
For $1<p<\infty$, the dual of the hyperbolic Bergman space
$\mathcal B^p_\alpha$ can be identified with $\mathcal B^{p'}_\alpha$,
where $p'=p/(p-1)$ is the conjugate exponent of $p$
(see \cite[Corollary 1.4]{U1}).
We complete the missing $p=1$ case.

\begin{theorem}\label{Tdual}
For $\alpha>-1$, the dual of $\mathcal B^1_\alpha$ can be
identified with $\mathcal B$ under the pairing
\begin{equation}\label{pairing}
\langle f,g\rangle_\alpha
=\lim_{r\to 1^-}\int_{r\mathbb B}f(x)g(x)\,d\nu_\alpha(x).
\end{equation}
More precisely, to each $\Lambda\in(\mathcal B^1_\alpha)^*$,
there corresponds a unique $g\in\mathcal B$ with
$\|g\|_{\mathcal B}$ equivalent to $\|\Lambda\|$ such that
$\Lambda(f)=\langle f,g\rangle_\alpha$.
Similarly, for every $\alpha>-1$ the dual of $\mathcal B_0$
can be identified with $\mathcal B^1_\alpha$ under the
pairing \eqref{pairing}.
\end{theorem}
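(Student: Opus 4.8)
The first task is to make sense of the right‑hand side of \eqref{pairing} on $\mathcal B^1_\alpha\times\mathcal B$: since a function in $\mathcal B^1_\alpha$ may grow like $(1-\lvert x\rvert^2)^{-(n+\alpha)}$ while a Bloch function grows only logarithmically, the integrals $\int_{r\mathbb B}fg\,d\nu_\alpha$ need not converge absolutely as $r\to1^-$, and the limit must be extracted from cancellation. My plan is to supply this through the fractional differential operators $D_s$ of Section \ref{SDiffOp}. Using the reproducing formula $f=P_\gamma f$ valid on $\mathcal B^1_\alpha$ for $\gamma>\alpha$ (from \cite{U1,U2}), a relation of the type $D_s\mathcal R_\gamma(x,\cdot)=c\,\mathcal R_{\gamma+s}(x,\cdot)$, Fubini's theorem, and the kernel estimates of \cite{U1}, I would establish a Littlewood--Paley type identity
\begin{equation*}
\langle f,g\rangle_\alpha=c_{s,\alpha}\int_{\mathbb B}D_sf(x)\,D_sg(x)\,(1-\lvert x\rvert^2)^{2s}\,d\nu_\alpha(x),\qquad f\in\mathcal B^1_\alpha,\ g\in\mathcal B,
\end{equation*}
whose right‑hand side converges absolutely and thus defines the pairing. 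Together with the characterization of $\mathcal B$ by $\sup_x(1-\lvert x\rvert^2)^s\lvert D_sf(x)\rvert<\infty$ from Section \ref{SDiffOp}, this yields $\lvert\langle f,g\rangle_\alpha\rvert\lesssim\|f\|_{\mathcal B^1_\alpha}\|g\|_{\mathcal B}$ once one checks $\int_{\mathbb B}\lvert D_sf\rvert(1-\lvert x\rvert^2)^s\,d\nu_\alpha\lesssim\|f\|_{\mathcal B^1_\alpha}$, which follows from the same reproducing formula and a Forelli--Rudin estimate. Equivalently, writing $g=P_\alpha\psi$ with $\psi\in L^\infty(\mathbb B)$, $\|\psi\|_\infty\lesssim\|g\|_{\mathcal B}$ by Theorem \ref{Tproj}, one can show directly that $\langle f,g\rangle_\alpha=\int_{\mathbb B}f\psi\,d\nu_\alpha$, so $\lvert\langle f,g\rangle_\alpha\rvert\le\|\psi\|_\infty\|f\|_{L^1_\alpha}\lesssim\|g\|_{\mathcal B}\|f\|_{\mathcal B^1_\alpha}$. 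Either way, $g\mapsto\langle\cdot,g\rangle_\alpha$ maps $\mathcal B$ boundedly into $(\mathcal B^1_\alpha)^*$.

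For surjectivity onto $(\mathcal B^1_\alpha)^*$: given $\Lambda$, I would extend it by Hahn--Banach to $L^1_\alpha(\mathbb B)$ (in which $\mathcal B^1_\alpha$ is closed); since $(L^1_\alpha)^*=L^\infty(\mathbb B)$ isometrically, there is $\psi\in L^\infty$ with $\|\psi\|_\infty=\|\Lambda\|$ and $\Lambda(f)=\int_{\mathbb B}f\psi\,d\nu_\alpha$ for $f\in\mathcal B^1_\alpha$. Put $g:=P_\alpha\psi\in\mathcal B$, so $\|g\|_{\mathcal B}\lesssim\|\Lambda\|$ by Theorem \ref{Tproj}. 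For $f=\mathcal R_\gamma(w,\cdot)$ (which lies in $\mathcal B^1_\alpha\cap\mathcal B^2_\alpha$ for every $w$), both $\Lambda(f)$ and $\langle f,g\rangle_\alpha$ equal $\int_{\mathbb B}\mathcal R_\gamma(w,y)\psi(y)\,d\nu_\alpha(y)$ --- the first by the formula for $\Lambda$, the second by Fubini together with $P_\alpha\mathcal R_\gamma(w,\cdot)=\mathcal R_\gamma(w,\cdot)$ (here $w$ is fixed, so the kernels and the relevant double integral are bounded). Since finite linear combinations of such kernels are dense in $\mathcal B^1_\alpha$ by the atomic decomposition of \cite{U2} and both functionals are continuous, $\Lambda=\langle\cdot,g\rangle_\alpha$. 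For uniqueness and the norm equivalence, testing $\langle\cdot,g\rangle_\alpha$ against $\mathcal R_\alpha(w,\cdot)\in\mathcal B^1_\alpha$ and using $g=P_\alpha\psi_0$ (Theorem \ref{Tproj}) gives $\langle\mathcal R_\alpha(w,\cdot),g\rangle_\alpha=g(w)$, so $g$ is determined and $\|g\|_{\mathcal B}\asymp\|\Lambda\|$.

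For $(\mathcal B_0)^*\cong\mathcal B^1_\alpha$, one direction is immediate: $\mathcal B_0$ carries the $\mathcal B$‑norm, so the estimate above shows each $\phi\in\mathcal B^1_\alpha$ gives a bounded functional on $\mathcal B_0$, injective because $\langle\mathcal R_\alpha(w,\cdot),\phi\rangle_\alpha=\phi(w)$ for $\phi\in\mathcal B^1_\alpha$ (now via $\phi=P_\gamma\phi$, $\gamma>\alpha$) while $\mathcal R_\alpha(w,\cdot)\in\mathcal B_0$. Conversely, given $\Lambda\in(\mathcal B_0)^*$: by the Section \ref{SDiffOp} characterization, $g\mapsto\big((1-\lvert x\rvert^2)^sD_sg(x),\,g(0)\big)$ embeds $\mathcal B_0$ isomorphically onto a closed subspace of $C_0(\mathbb B)\oplus\mathbb C$, so Hahn--Banach and the Riesz representation give a finite complex Borel measure $\mu$ on $\mathbb B$ and a constant with $\Lambda(g)=\int_{\mathbb B}(1-\lvert x\rvert^2)^sD_sg(x)\,d\mu(x)+c\,g(0)$. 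I would then set $\phi(x):=c'\int_{\mathbb B}\mathcal R_\beta(x,y)(1-\lvert y\rvert^2)^{\beta-\alpha}\,d\mu(y)+c''$ for a fixed $\beta>\alpha$; the estimate $\int_{\mathbb B}\lvert\mathcal R_\beta(x,y)\rvert\,d\nu_\alpha(x)\lesssim(1-\lvert y\rvert^2)^{\alpha-\beta}$ gives $\phi\in\mathcal B^1_\alpha$ with $\|\phi\|_{\mathcal B^1_\alpha}\lesssim\lvert\mu\rvert(\mathbb B)+1\lesssim\|\Lambda\|$, and testing against the kernels $\mathcal R_\gamma(w,\cdot)\in\mathcal B_0$ (using Fubini and the identities of Section \ref{SDiffOp}, together with the fact that $\int_{\mathbb B}g\,d\nu_\alpha$ is a constant multiple of $g(0)$ by the mean value property of $\mathcal H$‑harmonic functions, which absorbs the $g(0)$ term) one checks $\langle g,\phi\rangle_\alpha=\Lambda(g)$ on a dense subset of $\mathcal B_0$, hence everywhere. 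Uniqueness of $\phi$ follows from the injectivity above.

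The routine part is the Banach‑space bookkeeping: Hahn--Banach, the duals of $L^1$ and $C_0$, and the surjectivity of $P_\alpha$ supplied by Theorem \ref{Tproj}. The real obstacle is the content of the first paragraph --- defining the pairing \eqref{pairing} on $\mathcal B^1_\alpha\times\mathcal B$ and proving the basic estimate. Because dilations do not preserve $\mathcal H$‑harmonicity one cannot reduce to $f_r(x)=f(rx)$, and one is instead forced to rely on $P_\gamma f=f$ on $\mathcal B^1_\alpha$ and on the fractional operators $D_s$, which are tailored to interact cleanly with $\mathcal R_\alpha$, to convert a conditionally convergent integral into an absolutely convergent one. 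Controlling the resulting double integrals --- where the borderline exponents in the kernel estimates of \cite{U1} produce logarithmic factors that must be tracked carefully --- is where the difficulty lies; once that is in place, both duality statements follow along the lines above.
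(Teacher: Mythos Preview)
Your strategy is correct and matches the paper's in its essentials, but two tactical points deserve adjustment. First, the symmetric Littlewood--Paley identity you propose does not hold with a single constant $c_{s,\alpha}$; what the operators $D^t_s$ actually deliver (via $D^t_\alpha\mathcal R_\alpha=\mathcal R_{\alpha+t}$) is the \emph{asymmetric} formula
\[
\langle f,g\rangle_\alpha=\int_{\mathbb B}f(x)\,(1-\lvert x\rvert^2)^tD^t_\alpha g(x)\,d\nu_\alpha(x),
\]
which is precisely your ``alternative'' route with the specific preimage $\psi=(1-\lvert x\rvert^2)^tD^t_\alpha g$ supplied by Proposition~\ref{PDstchar}. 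The paper proves this (Lemma~\ref{Lpairingabs}) by writing $f=P_{\alpha+t}f$, swapping integrals on $r\mathbb B$, and passing to the limit via a uniform-in-$r$ bound on $(1-\lvert y\rvert^2)^t\int_{r\mathbb B}\mathcal R_{\alpha+t}(x,y)g(x)\,d\nu_\alpha(x)$ (Lemma~\ref{LEstrB}); this is exactly where the borderline logarithm you flag is absorbed.

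Second, the paper avoids Hahn--Banach throughout by composing $\Lambda$ with a bounded projection: for $(\mathcal B^1_\alpha)^*$ one takes $\Lambda\circ P_\gamma\in(L^1_\alpha)^*$ with $\gamma>\alpha$, obtains $\psi\in L^\infty$ by Riesz, sets $g=P_\alpha\psi$, and recognizes the resulting expression for $\Lambda(f)$ directly as $\int f\,(1-\lvert x\rvert^2)^{\gamma-\alpha}D^{\gamma-\alpha}_\alpha g\,d\nu_\alpha=\langle f,g\rangle_\alpha$ for \emph{every} $f\in\mathcal B^1_\alpha$, so no density argument is needed. The same device handles $(\mathcal B_0)^*$: compose with $P_\alpha\colon C_0(\mathbb B)\to\mathcal B_0$ (onto by Theorem~\ref{Tproj}) to obtain a measure $\mu$, and set $f(x)=\int\mathcal R_{\alpha+t}(x,y)(1-\lvert y\rvert^2)^t\,d\mu(y)$. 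Your Hahn--Banach route with verification on kernels and density via atomic decomposition would also work, but the composition-with-projection trick is more constructive and sidesteps both the embedding into $C_0(\mathbb B)\oplus\mathbb C$ and the separate handling of the $g(0)$ term.
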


For an unbounded $g\in\mathcal B$ and $f\in\mathcal B^1_\alpha$, the
integral $\int_{\mathbb B}f(x)g(x)d\nu_\alpha(x)$ may not absolutely
converge; however the limit in \eqref{pairing} always exists.
In the case of the \textit{holomorphic} Bloch space on the unit
ball of $\mathbb C^n$, $g(z)=\log 1/(1-z_1)$ is an unbounded
Bloch function.
On the other hand, it is not obvious whether there exists an unbounded
$\mathcal H$-harmonic Bloch function.
We give an example in Lemma \ref{Lunbdd}.

As our final result we prove atomic decomposition of
$\mathcal H$-harmonic Bloch functions.
Atomic decomposition of \textit{harmonic} Bergman and Bloch functions
have been obtained in \cite{CR} (see also \cite{CL}).
In the $\mathcal H$-harmonic case, see \cite{Ja2} for atomic
decomposition of Hardy spaces and \cite{U2} for Bergman spaces.

For $a,b\in\mathbb B$, the pseudo-hyperbolic metric is
$\rho(a,b)=\lvert\varphi_a(b)\rvert$.
For $0<r<1$, let $E_r(a)=\{\,x\in\mathbb B:\rho(x,a)<r\,\}$ be the
pseudo-hyperbolic ball with center $a$ and radius $r$.
A sequence $\{a_m\}$ of points of $\mathbb B$ is called $r$-separated
if $\rho(a_m,a_k)\geq r$ for $m\neq k$.
An $r$-separated sequence is called an $r$-lattice if
$\bigcup_{m=1}^\infty E_r(a_m)=\mathbb B$.
Denote by $\ell^\infty$, the space of bounded sequences with norm
$\|\{\lambda_m\}\|_{\ell^\infty}=\sup_{m\geq 1}\lvert\lambda_m\rvert$,
and by $c_0$ the subspace consisting of sequences that converge to $0$.

\begin{theorem}\label{TAtomic}
Let $\alpha>-1$.
There is an $r_0<1/2$ depending only on $n$ and $\alpha$ such that
if $\{a_m\}$ is an $r$-lattice with $r<r_0$, then for every
$f\in\mathcal B$ (resp. $\mathcal B_0$), there exits
$\{\lambda_m\}\in\ell^\infty$ (resp. $c_0$) such that
\begin{equation}\label{atomicrepr}
f(x)=\sum_{m=1}^\infty\lambda_m
\frac{\mathcal R_\alpha(x,a_m)}
{\|\mathcal R_\alpha(\cdot,a_m)\|_{\mathcal B}}
\qquad (x\in\mathbb B),
\end{equation}
where the series converges absolutely and uniformly on compact
subsets of $\mathbb B$ and the norm $\|\{\lambda_m\}\|_{\ell^\infty}$
is equivalent to the norm $\|f\|_{\mathcal B}$.
\end{theorem}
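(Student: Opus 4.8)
The plan is to follow the by-now-standard scheme for atomic decomposition via the sampling/reconstruction operator, adapted to the $\mathcal H$-harmonic setting using the fractional differential operators of Section~\ref{SDiffOp} in place of $\nabla^h$. Fix a large $\beta$ (depending on $n$ and $\alpha$) so that the Bergman projection $P_\beta$ reproduces $\mathcal B$ from $L^\infty(\mathbb B)$; this is available from Theorem~\ref{Tproj} and its proof. Given an $r$-lattice $\{a_m\}$ with associated pairwise-disjoint "Voronoi-type" Borel sets $D_m$ satisfying $E_{r/2}(a_m)\subset D_m\subset E_r(a_m)$ and $\bigcup_m D_m=\mathbb B$, define the discretization operator
\begin{equation*}
T\{\lambda_m\}(x)=\sum_{m=1}^\infty \lambda_m\,\nu_\beta(D_m)\,\frac{\mathcal R_\beta(x,a_m)}{\|\mathcal R_\beta(\cdot,a_m)\|_{\mathcal B}}.
\end{equation*}
First I would show $T\colon\ell^\infty\to\mathcal B$ (and $c_0\to\mathcal B_0$) is bounded: apply a fractional derivative $D$ (the operator characterizing $\mathcal B$ from Section~\ref{SDiffOp}) termwise, use the kernel estimates from \cite{U1} for $D_x\mathcal R_\beta(x,a_m)$, the fact that $\nu_\beta(D_m)\asymp (1-|a_m|^2)^{n+\beta}$, and the finite-overlap / integral-comparison lemma for $r$-lattices to dominate the sum by a convergent Bergman-type integral; the $c_0\to\mathcal B_0$ statement follows because tails are uniformly small off large pseudohyperbolic balls.

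Second, I would compare $T$ with the "continuous" reproduction. Writing $f=P_\beta\psi$ with $\psi\in L^\infty$, $\|\psi\|_\infty\lesssim\|f\|_{\mathcal B}$, we have $f(x)=\int_{\mathbb B}\mathcal R_\beta(x,y)\psi(y)\,d\nu_\beta(y)=\sum_m\int_{D_m}\mathcal R_\beta(x,y)\psi(y)\,d\nu_\beta(y)$, and the natural candidate coefficients are $\lambda_m=\psi(a_m)\|\mathcal R_\beta(\cdot,a_m)\|_{\mathcal B}\big/\nu_\beta(D_m)\cdot\nu_\beta(D_m)$, i.e. essentially $\lambda_m\approx \psi(a_m)\|\mathcal R_\beta(\cdot,a_m)\|_{\mathcal B}$. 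Since $\|\mathcal R_\beta(\cdot,a_m)\|_{\mathcal B}\asymp (1-|a_m|^2)^{-(n+\beta)}$ (from the kernel estimates), these $\lambda_m$ are bounded, and in $c_0$ when $f\in\mathcal B_0$. The error operator is
\begin{equation*}
(f-Tc)(x)=\sum_m\int_{D_m}\bigl(\mathcal R_\beta(x,y)-\mathcal R_\beta(x,a_m)\bigr)\psi(y)\,d\nu_\beta(y),
\end{equation*}
and one estimates $D_x$ of each integrand using the Lipschitz-type bound $|\mathcal R_\beta(x,y)-\mathcal R_\beta(x,a_m)|\lesssim \rho(y,a_m)\,\sup_{E_r(a_m)}|\nabla^h_y\mathcal R_\beta(x,\cdot)|$ together with the gradient kernel estimates from \cite{U1}; summing via the $r$-lattice integral comparison yields $p_{\mathcal B}(f-Tc)\le C r\,\|f\|_{\mathcal B}$ with $C$ independent of $r$.

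Third, choose $r_0$ so that $Cr_0<1/2$ (shrinking further if needed to stay below $1/2$ as required and to keep the first step's constants in force). Then $S:=I-T\circ(\text{sampling})$ has operator norm $<1$ on $\mathcal B$, so $I-S$ is invertible, $T$ composed with sampling-then-$(I-S)^{-1}$ gives a bounded right inverse, and every $f\in\mathcal B$ is $f=T\{\mu_m\}$ for some $\{\mu_m\}\in\ell^\infty$ with $\|\{\mu_m\}\|_{\ell^\infty}\asymp\|f\|_{\mathcal B}$; absorbing the harmless factor $\nu_\beta(D_m)\|\mathcal R_\beta(\cdot,a_m)\|_{\mathcal B}\asymp 1$ into the coefficients puts this in the form~\eqref{atomicrepr}. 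The same Neumann-series argument restricts to $\mathcal B_0$ and $c_0$ because $T$ and $S$ preserve these subspaces. Absolute and locally uniform convergence of the final series is immediate from the kernel estimates on compact subsets, where $\mathcal R_\beta(x,a_m)$ decays geometrically in $|a_m|$.

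The main obstacle I anticipate is the Lipschitz estimate for the kernel in the error term: in the harmonic or holomorphic case one differentiates the explicit kernel, but here one must work through the fractional operators of Section~\ref{SDiffOp} and the less explicit form of $\mathcal R_\beta$, verifying that the mean-value/gradient bound $|\mathcal R_\beta(x,y)-\mathcal R_\beta(x,a_m)|\lesssim\rho(y,a_m)(1-|a_m|^2)^{-(n+\beta)}\,(\text{something integrable in }x\text{ against }D_x)$ holds uniformly — this relies on the kernel and derivative-of-kernel estimates of \cite{U1} being strong enough to survive one more $\rho$-derivative, and on the compatibility of $D$ with $\mathcal R_\beta$. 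A secondary technical point is making the choice of $r_0$ genuinely depend only on $n,\alpha$ (not on the lattice), which follows once all constants in the estimates are traced to the kernel bounds and the universal finite-overlap constant of $r$-lattices.
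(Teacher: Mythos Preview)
Your overall scheme---define the atomic operator $T$, a sampling operator, bound the error $I-T\circ(\text{sampling})$ by $Cr$, and invert by Neumann series---is exactly the paper's. Two points in the execution need correcting.

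First, you work with $\mathcal R_\beta$ for an auxiliary ``large $\beta$'' and then claim the result takes the form~\eqref{atomicrepr}; but \eqref{atomicrepr} involves $\mathcal R_\alpha$, not $\mathcal R_\beta$, and absorbing constants into the coefficients cannot change the kernel. Since Theorem~\ref{Tproj} already holds for \emph{every} $\alpha>-1$, no largeness is needed: just take $\beta=\alpha$. The paper does this directly, working with $\mathcal R_\alpha$ and applying $D^t_\alpha$ (any $t>0$) so that the estimates live at the level of $\mathcal R_{\alpha+t}$.

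Second, your error formula is missing a term. With the point-sampled coefficients $\lambda_m\approx\psi(a_m)\|\mathcal R_\alpha(\cdot,a_m)\|_{\mathcal B}$ that you describe, the difference $f-T\lambda$ is
\begin{equation*}
\sum_m\int_{D_m}\bigl[\mathcal R_\alpha(x,y)\psi(y)-\mathcal R_\alpha(x,a_m)\psi(a_m)\bigr]\,d\nu_\alpha(y),
\end{equation*}
which splits into your kernel-difference term \emph{plus} the term $\sum_m\int_{D_m}\mathcal R_\alpha(x,a_m)\bigl(\psi(y)-\psi(a_m)\bigr)\,d\nu_\alpha(y)$. This second piece is not small for a generic $\psi\in L^\infty$; controlling it forces you to take the canonical $\psi(y)=(1-|y|^2)^tD^t_\alpha f(y)$ from Proposition~\ref{PDstchar} and then to bound $|D^t_\alpha f(y)-D^t_\alpha f(a_m)|$ via the reproducing formula for $D^t_\alpha f$ together with one more application of the kernel Lipschitz estimate. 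This is precisely the paper's term $h_2$, and it is where the anticipated ``one more $\rho$-derivative'' is actually spent. (If instead you replace $\psi(a_m)$ by the average $\nu_\alpha(D_m)^{-1}\int_{D_m}\psi\,d\nu_\alpha$, your one-term error formula becomes correct---but then the sampling map must in any case be defined through the canonical $\psi$, both for linearity on $\mathcal B$ and for the $\mathcal B_0\to c_0$ assertion.)
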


By Lemma \ref{LkernelinB}, the norm
$\|\mathcal R_\alpha(\cdot,a_m)\|_{\mathcal B}$ is equivalent
to $(1-\lvert a_m\rvert^2)^{-(\alpha+n)}$ and the theorem remains true
if one uses the representation
\begin{equation}\label{Repr2}
f(x)=\sum_{m=1}^\infty\lambda_m(1-\lvert a_m\rvert^2)^{\alpha+n}
\,\mathcal R_\alpha(x,a_m)
\qquad (x\in\mathbb B)
\end{equation}
instead of \eqref{atomicrepr}.

\section{Preliminaries}

We denote positive constants whose exact values are inessential by
the letter $C$.
The value of $C$ may be different in different occurrences.
For two positive expressions $X$ and $Y$, we write $X\lesssim Y$
to mean $X\leq C Y$.
If both $X\leq CY$ and $Y\leq CX$, we write $X\sim Y$.

For $x,y\in\mathbb B$, we define
\begin{equation*}
[x,y]:=\sqrt{1-2\langle x,y\rangle+\lvert x\rvert^2\lvert y\rvert^2}.
\end{equation*}
$[x,y]$ is symmetric, $[x,0]=1$, and if $y\neq 0$, then
$[x,y]=\bigl\lvert \lvert y\rvert x-y/\lvert y\rvert\bigr\rvert$.
Therefore
\begin{equation}\label{xybig}
[x,y]\geq 1-\lvert x\rvert\lvert y\rvert
\qquad (x,y\in\mathbb B).
\end{equation}

Denote by $\mathcal M(\mathbb B)$ the group of M\"obius
transformations that preserve $\mathbb B$.
For $a\in\mathbb B$, the canonical M\"obius transformation
that exchanges $a$ and $0$ is given by
\begin{equation}\label{definevarphia}
\varphi_a(x)
=\frac{a\lvert x-a\rvert^2+(1-\lvert a\rvert^2)(a-x)}{[x,a]^2}
\qquad (x\in\mathbb B).
\end{equation}
It is an involution, $\varphi_a^{-1}=\varphi_a$, and
for all $x\in\mathbb B$, the identity
\begin{equation}\label{MobiusIdnt}
1-\lvert\varphi_a(x)\rvert^2
=\frac{(1-\lvert a\rvert^2)(1-\lvert x\rvert^2)}{[x,a]^2}
\end{equation}
holds.
The determinant of the Jacobian matrix of $\varphi_a$ satisfies
(\cite[Theorem 3.3.1]{St1})
\begin{equation}\label{Jacob}
\lvert \det J \varphi_a(x)\rvert
=\frac{(1-\lvert\varphi_a(x)\rvert^2)^n}{(1-\lvert x\rvert^2)^n}.
\end{equation}
The equality
\begin{equation}\label{avarphia}
[a,\varphi_a(x)]=\frac{1-\lvert a\rvert^2}{[x,a]}
\end{equation}
follows from \eqref{MobiusIdnt}
(see \cite[Theorem 1.1]{RK} or \cite[Lemma 2.1]{U2}).

For $a,b\in\mathbb B$, the pseudo-hyperbolic metric
$\rho(a,b)=\lvert\varphi_a(b)\rvert$ satisfies the
equality
\begin{equation}\label{phmetric}
\rho(a,b)=\frac{\lvert a-b\rvert}{[a,b]}.
\end{equation}
The pseudo-hyperbolic ball
$E_r(a)=\{\,x\in\mathbb B:\rho(x,a)<r\,\}$ is also a Euclidean ball
with (see \cite[Theorem 2.2.2]{St1})
\begin{equation}\label{Ercenter}
  \text{center}=\frac{(1-r^2)a}{1-\lvert a\rvert^2r^2}
\quad \text{and}\quad
\text{radius}=\frac{(1-\lvert a\rvert^2)r}{1-\lvert a\rvert^2r^2}.
\end{equation}

For a proof of the following lemma,
see \cite[Lemma 2.1 and 2.2]{CKL}.
\begin{lemma}\label{Lratiobracket}
(i) For all $a,b\in\mathbb B$,
\begin{equation*}
\frac{1-\rho(a,b)}{1+\rho(a,b)}
\leq\frac{1-\lvert a\rvert}{1-\lvert b\rvert}
\leq\frac{1+\rho(a,b)}{1-\rho(a,b)}.
\end{equation*}
(ii) For all $a,b,x\in\mathbb B$,
\begin{equation*}
\frac{1-\rho(a,b)}{1+\rho(a,b)}
\leq\frac{[x,a]}{[x,b]}
\leq\frac{1+\rho(a,b)}{1-\rho(a,b)}.
\end{equation*}
\end{lemma}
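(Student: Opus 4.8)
The plan is to reduce both parts to a single algebraic identity and two elementary estimates for the bracket. Expanding the definitions (or combining \eqref{phmetric} with \eqref{MobiusIdnt} evaluated at $x=b$) gives
\[
[a,b]^2-\lvert a-b\rvert^2=(1-\lvert a\rvert^2)(1-\lvert b\rvert^2),
\]
which in particular shows $[a,b]>\lvert a-b\rvert$ and, together with $\rho(a,b)=\lvert a-b\rvert/[a,b]$, yields
\[
\frac{1+\rho(a,b)}{1-\rho(a,b)}=\frac{[a,b]+\lvert a-b\rvert}{[a,b]-\lvert a-b\rvert},\qquad
[a,b]-\lvert a-b\rvert=\frac{(1-\lvert a\rvert^2)(1-\lvert b\rvert^2)}{[a,b]+\lvert a-b\rvert}.
\]
The two estimates I will use are $[a,b]\ge 1-\lvert a\rvert\lvert b\rvert$ (this is \eqref{xybig}) and the reverse triangle inequality $\lvert a-b\rvert\ge\bigl\lvert\,\lvert a\rvert-\lvert b\rvert\,\bigr\rvert$. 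Since $\rho$ is symmetric, interchanging $a$ and $b$ swaps each lower bound with the corresponding upper bound, so in both (i) and (ii) it suffices to prove the upper inequality.

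For (i), the bound is trivial when $\lvert a\rvert\ge\lvert b\rvert$, since then the middle term is $\le 1$. When $\lvert a\rvert<\lvert b\rvert$, I clear denominators, substitute the second displayed identity for $[a,b]-\lvert a-b\rvert$, and simplify; the claim becomes
\[
(1-\lvert a\rvert)\sqrt{(1+\lvert a\rvert)(1+\lvert b\rvert)}\le[a,b]+\lvert a-b\rvert.
\]
The two elementary estimates give $[a,b]+\lvert a-b\rvert\ge(1-\lvert a\rvert\lvert b\rvert)+(\lvert b\rvert-\lvert a\rvert)=(1-\lvert a\rvert)(1+\lvert b\rvert)$, and $(1-\lvert a\rvert)(1+\lvert b\rvert)$ dominates the left-hand side because $1+\lvert b\rvert\ge\sqrt{(1+\lvert a\rvert)(1+\lvert b\rvert)}$ when $\lvert b\rvert\ge\lvert a\rvert$.

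For (ii) the extra point $x$ is handled by the M\"obius composition identity
\[
[\varphi_b(x),\varphi_b(a)]=\frac{(1-\lvert b\rvert^2)\,[x,a]}{[x,b]\,[a,b]},
\]
whose special case $x=0$ recovers \eqref{avarphia} and whose general form is standard (it follows by the same computation that gives \eqref{MobiusIdnt}). Rearranged, $[x,a]/[x,b]=\bigl([a,b]/(1-\lvert b\rvert^2)\bigr)[\varphi_b(x),\varphi_b(a)]$. Since $\varphi_b(x)\in\mathbb B$ and $\lvert\varphi_b(a)\rvert=\rho(a,b)=:\rho$, the elementary upper estimate $[y,c]\le 1+\lvert y\rvert\lvert c\rvert\le 1+\lvert c\rvert$ gives $[\varphi_b(x),\varphi_b(a)]\le 1+\rho$ for every $x$, hence
\[
\frac{[x,a]}{[x,b]}\le\frac{[a,b]\,(1+\rho)}{1-\lvert b\rvert^2}.
\]
It then remains to verify $\tfrac{[a,b](1+\rho)}{1-\lvert b\rvert^2}\le\tfrac{1+\rho}{1-\rho}$, i.e.\ $[a,b](1-\rho)\le 1-\lvert b\rvert^2$. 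Because $[a,b](1-\rho)=[a,b]-\lvert a-b\rvert=(1-\lvert a\rvert^2)(1-\lvert b\rvert^2)/([a,b]+\lvert a-b\rvert)$, this is equivalent to $1-\lvert a\rvert^2\le[a,b]+\lvert a-b\rvert$, which the same two estimates give after splitting into the cases $\lvert a\rvert\ge\lvert b\rvert$ (use $[a,b]\ge 1-\lvert a\rvert\lvert b\rvert\ge 1-\lvert a\rvert^2$) and $\lvert a\rvert<\lvert b\rvert$ (use $[a,b]+\lvert a-b\rvert\ge(1-\lvert a\rvert)(1+\lvert b\rvert)\ge 1-\lvert a\rvert^2$).

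The only genuinely three-point part of the argument is (ii), and the main obstacle there is that $[x,a]/[x,b]$ is not M\"obius invariant, so one cannot simply normalize $b=0$. The composition identity circumvents this: it transfers the entire $x$-dependence into the single factor $[\varphi_b(x),\varphi_b(a)]$, whose supremum over $x\in\mathbb B$ is the elementary quantity $1+\rho$, after which only the two-point inequality $1-\lvert a\rvert^2\le[a,b]+\lvert a-b\rvert$—of exactly the same flavor as in (i)—remains.
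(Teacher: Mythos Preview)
Your argument is correct in both parts. The key identity $[a,b]^2-\lvert a-b\rvert^2=(1-\lvert a\rvert^2)(1-\lvert b\rvert^2)$, the reduction of (i) to $(1-\lvert a\rvert)\sqrt{(1+\lvert a\rvert)(1+\lvert b\rvert)}\le[a,b]+\lvert a-b\rvert$, and the M\"obius composition identity $[\varphi_b(x),\varphi_b(a)]=(1-\lvert b\rvert^2)[x,a]/([x,b][a,b])$ all check out, and the case splits at the end of each part close cleanly.

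There is nothing to compare against in the paper itself: the lemma is not proved here but quoted from \cite[Lemmas 2.1 and 2.2]{CKL}. Your self-contained derivation is therefore a genuine addition rather than a variant of an existing argument. The use of the composition identity in (ii) is a nice device; it makes the three-point estimate a direct consequence of the two-point one, so the whole lemma rests on the single algebraic fact $[a,b]^2-\lvert a-b\rvert^2=(1-\lvert a\rvert^2)(1-\lvert b\rvert^2)$ together with the obvious bounds $1-\lvert a\rvert\lvert b\rvert\le[a,b]\le 1+\lvert a\rvert\lvert b\rvert$ and the reverse triangle inequality.
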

The hyperbolic metric on $\mathbb B$ is given by
\begin{equation*}
\beta(a,b)
=\log\frac{1+\rho(a,b)}{1-\rho(a,b)}
\qquad (a,b\in\mathbb B).
\end{equation*}
Both metrics $\rho$ and $\beta$ are M\"obius invariant.

Finally, we mention two integral estimates.
Let $\sigma$ be the normalized surface measure on $\mathbb S$.
For a proof of the following lemma see \cite[Proposition 2.2]{LS}.

\begin{lemma}\label{LIntonSandB}
Let $s>-1$ and $t\in\mathbb R$.
For all $x\in\mathbb B$,
\begin{equation*}
\int_{\mathbb S}\frac{d\sigma(\zeta)}{\lvert x-\zeta\rvert^{n-1+t}}
\sim\int_{\mathbb B}\frac{(1-\lvert y\rvert^2)^s}{[x,y]^{n+s+t}}
d\nu(y)
\sim
\begin{cases}
\dfrac{1}{(1-\lvert x\rvert^2)^{t}},&\text{if $t>0$};\\
1+\log\dfrac{1}{1-\lvert x\rvert^2},&\text{if $t=0$};\\
1,&\text{if $t<0$},
\end{cases}
\end{equation*}
where the implied constants depend only on $n,s,t$.
\end{lemma}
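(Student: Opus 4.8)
The plan is to prove the chain of equivalences by first establishing the estimate for the surface integral over $\mathbb S$ against the case function on the right, and then deducing the estimate for the volume integral over $\mathbb B$ from the surface estimate by passing to polar coordinates. Since $[x,y]\geq 1-\lvert x\rvert\lvert y\rvert>0$ and $\lvert x-\zeta\rvert\geq 1-\lvert x\rvert>0$, all three quantities are finite, and on any compact subset of $\mathbb B$ they are bounded above and below by positive constants; hence the entire content lies in the behaviour as $\lvert x\rvert\to 1^-$.

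For the surface integral, rotational invariance of $\sigma$ lets me take $x=re_n$ with $r=\lvert x\rvert$. Writing $\zeta=(\sqrt{1-u^2}\,\omega,u)$ with $\omega\in\mathbb S^{n-2}$ and $u=\langle\zeta,e_n\rangle\in[-1,1]$, the surface measure decomposes and, since $\lvert re_n-\zeta\rvert^2=1-2ru+r^2$, integrating out $\omega$ yields
\[
\int_{\mathbb S}\frac{d\sigma(\zeta)}{\lvert x-\zeta\rvert^{n-1+t}}=c_n\int_{-1}^{1}\frac{(1-u^2)^{(n-3)/2}}{(1-2ru+r^2)^{(n-1+t)/2}}\,du.
\]
The integrand concentrates, as $r\to 1$, near $u=1$, where the exact identity $1-2ru+r^2=(1-r)^2+2r(1-u)$ holds and $(1-u^2)^{(n-3)/2}\sim(1-u)^{(n-3)/2}$. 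The substitution $1-u=(1-r)^2w/(2r)$ rescales the near-diagonal part to $(1-r)^{-t}\int_0^{M}w^{(n-3)/2}(1+w)^{-(n-1+t)/2}\,dw$ with $M\sim(1-r)^{-2}$; the tail exponent equals $(t+2)/2$, so the $w$-integral converges as $M\to\infty$ precisely when $t>0$ (giving $(1-r)^{-t}\sim(1-\lvert x\rvert^2)^{-t}$), diverges logarithmically when $t=0$ (producing $\log\frac{1}{1-\lvert x\rvert^2}$), and is dominated by $u$ bounded away from $1$ when $t<0$ (staying $\sim 1$). The matching lower bounds come from restricting the $u$-integral to $1-u\leq(1-r)^2$ when $t\geq 0$ and to a fixed set disjoint from $u=1$ when $t<0$.

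For the volume integral I use polar coordinates $y=\rho\eta$, $\rho\in[0,1)$, $\eta\in\mathbb S$, so that $d\nu(y)=n\rho^{n-1}\,d\rho\,d\sigma(\eta)$, together with the identity $[x,y]=\bigl\lvert\lvert y\rvert x-y/\lvert y\rvert\bigr\rvert$ recorded before \eqref{xybig}, which gives $[x,\rho\eta]=\lvert\rho x-\eta\rvert$. Hence
\[
\int_{\mathbb B}\frac{(1-\lvert y\rvert^2)^s}{[x,y]^{n+s+t}}\,d\nu(y)=n\int_0^1(1-\rho^2)^s\rho^{n-1}\Bigl(\int_{\mathbb S}\frac{d\sigma(\eta)}{\lvert\rho x-\eta\rvert^{n+s+t}}\Bigr)d\rho.
\]
The inner integral is a surface integral of the type already settled, with base point $\rho x$ of modulus $\rho\lvert x\rvert$ and with $n-1+t$ replaced by $n+s+t$, i.e. with parameter $\tau:=s+t+1$ in place of $t$. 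Substituting its two-sided estimate and using that $\sim$ passes through the positive $\rho$-integration reduces the problem to the radial integral $\int_0^1(1-\rho^2)^s\rho^{n-1}g_\tau(\rho\lvert x\rvert)\,d\rho$, where $g_\tau$ is the corresponding case function. When $\tau>0$ the inner factor is $(1-\rho^2\lvert x\rvert^2)^{-(s+t+1)}$, and the substitution $u=\rho^2$ followed by the rescaling $1-\rho^2\lvert x\rvert^2\approx(1-\rho^2)+(1-\lvert x\rvert^2)$ near $\rho=1$ turns the integral into $(1-\lvert x\rvert^2)^{-t}\int_0^{M}v^s(1+v)^{-(s+t+1)}\,dv$ with $M\sim(1-\lvert x\rvert^2)^{-1}$; here $s>-1$ secures convergence at $0$, and the upper limit reproduces the same trichotomy in the sign of $t$ (convergent for $t>0$, logarithmic for $t=0$, of order $(1-\lvert x\rvert^2)^{t}$ for $t<0$), so the product matches the claimed case function. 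In the remaining regimes $\tau\leq 0$ one has $t<0$, the inner factor is $1+\log\frac{1}{1-\rho^2\lvert x\rvert^2}$ or $1$, and since $s>-1$ keeps $(1-\rho^2)^s\rho^{n-1}$ integrable the radial integral stays bounded above and below, matching the value $1$.

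The main obstacle is the asymptotic analysis of the two one-dimensional integrals, carried out uniformly in $x$ across the three sign regimes. The delicate points are isolating the near-diagonal mass and bounding the complementary region separately, controlling the factor $(1-u^2)^{(n-3)/2}\sim(1-u)^{(n-3)/2}$ and the approximation $1-\rho^2\lvert x\rvert^2\approx(1-\rho^2)+(1-\lvert x\rvert^2)$, and verifying that the convergence of the auxiliary $w$- and $v$-integrals is governed by exactly the thresholds $t\gtrless 0$. A secondary hazard is bookkeeping the two parameter shifts, first $t\mapsto\tau=s+t+1$ at the surface step and then back to $t$ after the radial integration; once these are organized, the upper and lower bounds both follow by restricting each integral to the region carrying the dominant mass.
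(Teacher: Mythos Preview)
The paper does not actually prove this lemma; it simply cites \cite[Proposition~2.2]{LS} and moves on. Your proposal supplies a direct proof, and the strategy---reduce the sphere integral to a one-variable integral via rotational invariance, analyze the near-diagonal mass by the substitution adapted to $(1-r)^2+2r(1-u)$, and then recover the ball integral by polar coordinates using $[x,\rho\eta]=\lvert\rho x-\eta\rvert$---is the standard Forelli--Rudin argument and is correct. The bookkeeping of the parameter shift $t\mapsto\tau=s+t+1$ and back is handled properly, and the threshold analysis of the auxiliary $w$- and $v$-integrals matches the trichotomy exactly; note that when $\tau>0$ but $t<0$ your radial step correctly produces $(1-\lvert x\rvert^2)^{-t}\cdot(1-\lvert x\rvert^2)^{t}\sim 1$, so all sign regimes of $t$ are covered.
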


\section{Reproducing Kernels and Fractional Differential Operators}
\label{SDiffOp}

In this section we review the properties of the reproducing
kernels and define a family of differential operators $D^t_s$.

Denote by $H_m(\mathbb R^n)$ the space of all homogeneous
(Euclidean) harmonic polynomials of degree $m$ on $\mathbb R^n$.
It is finite dimensional with $\text{dim}\, H_m\sim m^{n-2}$ $(m\geq 1)$.
By homogeneity, $q_m\in H_m(\mathbb R^n)$ is determined by its
restriction on $\mathbb S$ which is called a spherical harmonic
and the space of spherical harmonics of degree $m$ is denoted
by $H_m(\mathbb S)$.
Spherical harmonics of different degrees are orthogonal
on $L^2(\mathbb S)$,
\begin{equation}\label{orthog}
\int_{\mathbb S} q_m(\zeta)q_k(\zeta)\,d\sigma(\zeta)=0
\qquad (m\neq k, q_m\in H_m(\mathbb S), q_k\in H_k(\mathbb S)).
\end{equation}
Point evaluation functionals are bounded on $H_m(\mathbb S)$ and
so for every $\eta\in\mathbb S$, there exists
$Z_m(\eta,\cdot)\in H_m(\mathbb S)$, called the zonal harmonic
of degree $m$ with pole $\eta$, such that for all
$q_m\in H_m(\mathbb S)$,
\begin{equation}\label{zonal}
q_m(\eta)=\int_{\mathbb S}q_m(\zeta)Z_m(\eta,\zeta)\,d\sigma(\zeta).
\end{equation}
$Z_m(\cdot,\cdot)$ is real valued, symmetric, and homogeneous of degree
$m$ in each variable.
On the diagonal, $Z_m(\zeta,\zeta)=\text{dim}\,H_m$, and in general
$\lvert Z_m(\eta,\zeta)\rvert\leq Z_m(\zeta,\zeta)$.
Thus
\begin{equation}\label{Zmless}
\lvert Z_m(\eta,\zeta)\rvert\lesssim m^{n-2}
\qquad(m\geq 1).
\end{equation}
For details we refer the reader to \cite[Chapter 5]{ABR}.

For $q_m\in H_m(\mathbb R^n)$, the solution of the $\mathcal H$-harmonic
Dirichlet problem on $\mathbb B$ with boundary data
$q_m\vert_{\mathbb S}$ is given by (\cite[Theorem 6.1.1]{St1})
\begin{equation}\label{solnd}
g(x)=S_m(\lvert x\rvert)q_m(x)
\qquad (x\in\overline{\mathbb B}).
\end{equation}
That is, $g$ is $\mathcal H$-harmonic on $\mathbb B$, continuous on
$\overline{\mathbb B}$ and equals $q_m$ on $\mathbb S$.
Here, the factor $S_m(r)$ $(0\leq r\leq 1)$ is given by
\begin{equation}\label{Smr}
S_m(r)
=\frac{F(m,1-\tfrac{1}{2}n;m+\tfrac{1}{2}n;r^2)}
{F(m,1-\tfrac{1}{2}n;m+\tfrac{1}{2}n;1)},
\end{equation}
where
\begin{equation}\label{hyperg}
F(a,b;c;z)=\sum_{k=0}^\infty\frac{(a)_k(b)_k}{(c)_k k!}\,z^k
\end{equation}
is the Gauss hypergeometric function.
$S_m$ depends also on the dimension $n$ but we do not write this
for shortness.
When the dimension $n$ is even the hypergeometric series terminates
and $S_m$ is a polynomial, but this is not true in odd dimensions.
$S_m(r)$ is a decreasing function of $r$, and is normalized so that
$S_m(1)=1$.
$S_0\equiv 1$ and when $m\geq 1$, the estimate
(\cite[Proposition I.6]{Sou} or \cite[Lemma 2.6]{St3})
\begin{equation}\label{Smbound}
1\leq S_m(r)\leq Cm^{n/2-1}
\qquad (0\leq r\leq 1)
\end{equation}
holds, where $C=C(n)$ is a constant depending only on $n$.

Every $\mathcal H$-harmonic function on $\mathbb B$ can be written
as a series of terms of the form \eqref{solnd}.
More precisely, for every $f\in\mathcal H(\mathbb B)$, there exists
a unique sequence of polynomials $q_m\in H_m(\mathbb R^n)$ such that
(see \cite{Ja1}, \cite{JP2}, \cite{Min}, \cite[Theorem 6.3.1]{St1})
\begin{equation*}
f(x)=\sum_{m=0}^\infty S_m(\lvert x\rvert)q_m(x)
\qquad (x\in\mathbb B),
\end{equation*}
where the series converges absolutely and uniformly on compact
subsets of $\mathbb B$.

The hyperbolic Poisson kernel and its series expansion are
given by \cite[Theorem 6.2.2]{St1}
\begin{equation}\label{Poisson}
P_h(x,\zeta)
=\frac{(1-\lvert x\rvert^2)^{n-1}}{\lvert x-\zeta\rvert^{2(n-1)}}
=\sum_{m=0}^\infty S_m(\lvert x\rvert)Z_m(x,\zeta)
\qquad (x\in\mathbb B,\zeta\in\mathbb S).
\end{equation}
For the Bergman reproducing kernels $\mathcal R_\alpha(x,y)$
a closed formula is not known, however, the following series
expansion holds (\cite[Corollary III.5]{Sou},
\cite[Theorem 5.3]{St2})
\begin{equation}\label{KernelExp}
\mathcal R_\alpha(x,y)
=\sum_{m=0}^\infty c_m(\alpha)S_m(\lvert x\rvert)
S_m(\lvert y\rvert)Z_m(x,y)
\qquad (\alpha>-1, x,y\in\mathbb B),
\end{equation}
where the coefficients $c_m(\alpha)$ are determined by
\begin{equation}\label{cm}
\frac{1}{c_m(\alpha)}
=n\int_0^1 r^{2m+n-1}S_m^2(r)(1-r^2)^\alpha\,dr.
\end{equation}
A formula for the above integral is not known either.
However, the estimate
\begin{equation}\label{cmasym}
c_m(\alpha)\sim m^{\alpha+1}\qquad (m\to\infty)
\end{equation}
holds (see (\cite[Theorem III.6]{Sou}) from which it follows
that the series in \eqref{KernelExp} converges absolutely and
uniformly on $K\times\overline{\mathbb B}$ for every compact
$K\subset\mathbb B$.

It is clear from \eqref{cm} that $c_m(\alpha)>0$.
Using these coefficients we define a family of fractional
differential operators following \cite{JP1} and \cite{GKU}.

\begin{definition}
Let $s>-1$ and $s+t>-1$.
If $f\in\mathcal H(\mathbb B)$ has the series expansion
$f(x)=\sum_{m=0}^\infty S_m(\lvert x\rvert)q_m(x)$,
then define
\begin{equation}\label{DefineDst}
D^t_s f(x)
=\sum_{m=0}^\infty\frac{c_m(s+t)}{c_m(s)}S_m(\lvert x\rvert)q_m(x).
\end{equation}
\end{definition}

The operator $D^t_s$ multiplies the $m^{\text{th}}$ term of the
series expansion of $f$ with the coefficient
$c_m(s+t)/c_m(s)\sim m^t$ by \eqref{cmasym}.
Similar types of operators are frequently used in the theory of
holomorphic and harmonic Bergman spaces and act as
differential operators of order $t$ (integral if $t<0$).
For $\mathcal H$-harmonic functions slightly different operators with
multipliers $\Gamma(m+s)/\Gamma(m+\beta+s)$ are used in
\cite{RKSL} and Hardy-Littlewood inequalities are obtained.

\begin{lemma}\label{LDstcont}
For $f\in\mathcal H(\mathbb B)$, the series in \eqref{DefineDst}
absolutely and uniformly converges on compact subsets of
$\mathbb B$ and so $D^t_sf\in\mathcal H(\mathbb B)$.
In addition,
$D^t_s:\mathcal H(\mathbb B)\to\mathcal H(\mathbb B)$ is
continuous when $\mathcal H(\mathbb B)$ is equipped with the
topology of uniform convergence on compact subsets.
\end{lemma}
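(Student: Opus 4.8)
\emph{Proof plan.} Everything will be reduced to the single quantitative estimate: for every $g\in\mathcal H(\mathbb B)$ and every pair $0<\rho<r_0<1$,
\[
\sup_{\lvert x\rvert\le\rho}\lvert D^t_sg(x)\rvert\le C\sup_{\lvert y\rvert=r_0}\lvert g(y)\rvert,
\]
with $C=C(\rho,r_0,n,s,t)$ independent of $g$; in particular the series in \eqref{DefineDst} converges absolutely and uniformly on $\rho\mathbb B$. Granting this, the first assertion of the lemma follows because every compact subset of $\mathbb B$ lies in some $\rho\mathbb B$ with $\rho<1$, and then any $r_0\in(\rho,1)$ makes the estimate applicable: the series converges absolutely and uniformly on compacta, each partial sum is a finite sum of terms of the form \eqref{solnd} and hence $\mathcal H$-harmonic, and a locally uniform limit of $\mathcal H$-harmonic functions is $\mathcal H$-harmonic (by the invariant mean value property, see \cite{St1}). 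The continuity of $D^t_s$ is then also immediate: $D^t_s$ is linear, and the estimate shows $\sup_{\rho\mathbb B}\lvert D^t_sf\rvert\le C\sup_{r_0\mathbb B}\lvert f\rvert$ for all $f\in\mathcal H(\mathbb B)$ and $0<\rho<r_0<1$, so each of the seminorms $f\mapsto\sup_{\rho\mathbb B}\lvert f\rvert$ defining the topology of $\mathcal H(\mathbb B)$ is dominated after applying $D^t_s$ by another such seminorm of $f$.

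To prove the estimate, fix $g\in\mathcal H(\mathbb B)$ with expansion $g(x)=\sum_{m=0}^\infty S_m(\lvert x\rvert)q_m(x)$, $q_m\in H_m(\mathbb R^n)$. By homogeneity, on $\lvert x\rvert=r_0$ we have $g(r_0\zeta)=\sum_m S_m(r_0)r_0^m q_m(\zeta)$, a series of spherical harmonics of distinct degrees converging uniformly on $\mathbb S$; so \eqref{orthog} gives
\[
\int_{\mathbb S}\lvert g(r_0\zeta)\rvert^2\,d\sigma(\zeta)=\sum_{m=0}^\infty S_m(r_0)^2r_0^{2m}\lVert q_m\rVert_{L^2(\mathbb S)}^2 .
\]
Since $S_m(r_0)\ge1$, this yields $\lVert q_m\rVert_{L^2(\mathbb S)}\le r_0^{-m}\bigl(\int_{\mathbb S}\lvert g(r_0\zeta)\rvert^2\,d\sigma\bigr)^{1/2}\le r_0^{-m}\sup_{r_0\mathbb S}\lvert g\rvert$. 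Next, the reproducing property \eqref{zonal} with Cauchy--Schwarz gives $\lvert q_m(\eta)\rvert\le\lVert q_m\rVert_{L^2(\mathbb S)}\lVert Z_m(\eta,\cdot)\rVert_{L^2(\mathbb S)}$ for $\eta\in\mathbb S$, and $\lVert Z_m(\eta,\cdot)\rVert_{L^2(\mathbb S)}^2=Z_m(\eta,\eta)=\dim H_m\lesssim m^{n-2}$. Hence
\[
\lVert q_m\rVert_{L^\infty(\mathbb S)}\lesssim m^{(n-2)/2}\,r_0^{-m}\sup_{r_0\mathbb S}\lvert g\rvert .
\]

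Now assemble the bound on $D^t_sg$. Since $s>-1$ and $s+t>-1$, the integrals in \eqref{cm} defining $c_m(s)$ and $c_m(s+t)$ are finite and positive, so the multiplier $c_m(s+t)/c_m(s)$ is well defined, and by \eqref{cmasym} it satisfies $c_m(s+t)/c_m(s)\lesssim m^t$ for $m\ge1$. Using also $S_m(\rho)\le Cm^{n/2-1}$ from \eqref{Smbound} and, by homogeneity, $\lvert q_m(x)\rvert\le\rho^m\lVert q_m\rVert_{L^\infty(\mathbb S)}$ for $\lvert x\rvert\le\rho$, the $m$-th term of the series \eqref{DefineDst} is bounded on $\rho\mathbb B$ by
\[
C\,m^t\cdot m^{n/2-1}\rho^m\cdot m^{(n-2)/2}r_0^{-m}\sup_{r_0\mathbb S}\lvert g\rvert
= C\,m^{t+n-2}\Bigl(\frac{\rho}{r_0}\Bigr)^m\sup_{r_0\mathbb S}\lvert g\rvert .
\]
As $\rho/r_0<1$ the geometric factor dominates, $\sum_m m^{t+n-2}(\rho/r_0)^m<\infty$, and summing over $m$ gives both the absolute and uniform convergence on $\rho\mathbb B$ and the claimed estimate (the $m=0$ term being handled directly from the $L^2$ identity above).

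The only genuinely substantive step is the second paragraph, namely controlling the homogeneous components $q_m$ of an $\mathcal H$-harmonic function by its size on a slightly larger sphere; the orthogonality \eqref{orthog} together with $S_m\ge1$ and the zonal reproducing property \eqref{zonal} are exactly what is needed. Once the coefficient bound $\lVert q_m\rVert_{L^\infty(\mathbb S)}\lesssim m^{(n-2)/2}r_0^{-m}\sup_{r_0\mathbb S}\lvert g\rvert$ is in place, the remainder is routine bookkeeping with the known growth rates $c_m(\alpha)\sim m^{\alpha+1}$, $S_m(r)\le Cm^{n/2-1}$, and $\dim H_m\sim m^{n-2}$.
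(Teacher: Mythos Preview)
Your proof is correct and follows essentially the approach the paper indicates: the paper does not give a detailed argument but refers to \cite[Theorems~3.1 and 3.2]{GKU}, noting only that the extra factor $S_m(r)$ is handled by \eqref{Smbound}. Your proof is a clean, self-contained realization of precisely this scheme---bounding the spherical-harmonic components $q_m$ of $g$ via orthogonality on an intermediate sphere (using $S_m\ge1$), passing from $L^2$ to $L^\infty$ via the zonal reproducing property \eqref{zonal}, and then summing with the multipliers $c_m(s+t)/c_m(s)\lesssim m^t$ from \eqref{cmasym} and $S_m\le Cm^{n/2-1}$ from \eqref{Smbound} against the geometric factor $(\rho/r_0)^m$.
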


This lemma can be verified in the same way as
\cite[Theorems 3.1 and 3.2]{GKU}.
An additional term $S_m(r)$ appears but it can easily be
handled with the estimate \eqref{Smbound}.

The operator $D^t_s$ is invertible with
\begin{equation}\label{DstInverse}
D^{-t}_{s+t}D^t_s=D^t_sD^{-t}_{s+t}=\textrm{Id}.
\end{equation}
The role of $s$ is minor and one reason for its inclusion is to
simplify the action of $D^t_s$ on the reproducing kernel $\mathcal R_s$,
\begin{equation}\label{DstRs}
D^t_s\mathcal R_s(x,y)=\mathcal R_{s+t}(x,y).
\end{equation}

If $f\in\mathcal H(\mathbb B)$ is also integrable, then
$D^t_s$ can be written as an integral operator.

\begin{lemma}\label{LDstInt}
Let $s>-1$, $s+t>-1$ and $f\in L^1_s(\mathbb B)$.
\begin{enumerate}
\item[(i)] $D^t_sP_sf(x)
=D^t_s\int_{\mathbb B} \mathcal R_s(x,y)f(y)d\nu_s(y)
=\int_{\mathbb B}\mathcal R_{s+t}(x,y)f(y)d\nu_s(y).$
\item[(ii)] If $f$ is also in $\mathcal H(\mathbb B)$, then
$D^t_sf(x)=\int_{\mathbb B} \mathcal R_{s+t}(x,y)f(y)d\nu_s(y).$
\end{enumerate}
\end{lemma}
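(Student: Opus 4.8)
The plan is to prove (i) first and then derive (ii) as an immediate consequence of the identity \eqref{DstRs}. For (i), I would start from the series expansion \eqref{KernelExp} of the kernel $\mathcal R_s(x,y)$ and compute $P_sf(x)=\int_{\mathbb B}\mathcal R_s(x,y)f(y)\,d\nu_s(y)$ by interchanging the sum and the integral. This interchange is justified because, for fixed $x$, the series in \eqref{KernelExp} converges absolutely and uniformly for $y$ in the closed ball (as noted after \eqref{cmasym}), the bounds \eqref{Smbound} and \eqref{Zmless} give $|c_m(s)S_m(|x|)S_m(|y|)Z_m(x,y)|\lesssim m^{s+1}m^{n/2-1}m^{n/2-1}|x|^m=C_m(x)|y|^m$ with $\sum_m C_m(x)|x|^{m}<\infty$ when $|y|\le 1$ is dominated suitably — more carefully, one uses that $|Z_m(x,y)|\le |x|^m|y|^m Z_m(\zeta,\eta)|_{\mathbb S}\lesssim m^{n-2}|x|^m$ for $|y|\le 1$, so the $m$-th term is bounded by $C m^{s+n-2+n/2-1}|x|^m$, summable against $f\in L^1_s$. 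Thus $P_sf(x)=\sum_{m=0}^\infty c_m(s)S_m(|x|)\, p_m(x)$, where $p_m(x)=S_m(|x|)^{-1}\!\int_{\mathbb B}S_m(|y|)Z_m(x,y)f(y)\,d\nu_s(y)$; one checks that $c_m(s)S_m(|x|)p_m(x)$ is, up to the normalizing factor, the $m$-th term $S_m(|x|)q_m(x)$ in the $\mathcal H$-harmonic expansion of $P_sf$, i.e.\ $q_m(x)=c_m(s)p_m(x)$ lies in $H_m(\mathbb R^n)$ because $Z_m(x,y)$ is a homogeneous harmonic polynomial of degree $m$ in $x$. (That $P_sf\in\mathcal H(\mathbb B)$ and the uniqueness of this expansion are guaranteed by the facts recalled in Section \ref{SDiffOp}.)

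Once the $\mathcal H$-harmonic expansion $P_sf(x)=\sum_m S_m(|x|)q_m(x)$ with $q_m=c_m(s)p_m$ is in hand, the definition \eqref{DefineDst} gives
\begin{equation*}
D^t_s P_sf(x)=\sum_{m=0}^\infty\frac{c_m(s+t)}{c_m(s)}S_m(|x|)q_m(x)
=\sum_{m=0}^\infty c_m(s+t)S_m(|x|)\,p_m(x),
\end{equation*}
and reassembling $p_m$ into the integral and interchanging sum and integral once more — valid by the same domination argument, now with $c_m(s+t)\sim m^{s+t+1}$ in place of $c_m(s)$ and noting $s+t>-1$ so the estimate \eqref{cmasym} still applies — yields
\begin{equation*}
D^t_s P_sf(x)=\int_{\mathbb B}\Big(\sum_{m=0}^\infty c_m(s+t)S_m(|x|)S_m(|y|)Z_m(x,y)\Big)f(y)\,d\nu_s(y)
=\int_{\mathbb B}\mathcal R_{s+t}(x,y)f(y)\,d\nu_s(y),
\end{equation*}
where the bracketed series is exactly $\mathcal R_{s+t}(x,y)$ by \eqref{KernelExp} with parameter $s+t$. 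This proves (i). For (ii): if $f\in L^1_s\cap\mathcal H(\mathbb B)$ then $f$ is reproduced by $\mathcal R_s$, i.e.\ $P_sf=f$ — this is the reproducing property \eqref{Reproduce}; strictly speaking \eqref{Reproduce} is stated for $\mathcal B^2_s$, but it extends to $L^1_s\cap\mathcal H(\mathbb B)$ by the same series computation, since $P_sf$ and $f$ have the same $\mathcal H$-harmonic expansion (one compares the $m$-th coefficients using the zonal reproducing identity \eqref{zonal} together with the normalization \eqref{cm} of $c_m(s)$), or one simply cites \cite{U1,U2}. Substituting $P_sf=f$ into (i) gives $D^t_sf(x)=\int_{\mathbb B}\mathcal R_{s+t}(x,y)f(y)\,d\nu_s(y)$.

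The main obstacle is the bookkeeping in the two Fubini interchanges: one must produce, for each fixed $x\in\mathbb B$, an $L^1_s(d\nu_s)$-integrable-in-$y$ majorant for the partial sums of $\sum_m c_m(s{+}t)S_m(|x|)S_m(|y|)Z_m(x,y)f(y)$ that is \emph{uniform in the truncation index}. The clean way is to bound $|S_m(|x|)S_m(|y|)Z_m(x,y)|$: writing $Z_m(x,y)=|x|^m|y|^m Z_m(x/|x|,y/|y|)$ (for $x,y\ne 0$) and using $|Z_m(\zeta,\eta)|\le Z_m(\eta,\eta)=\dim H_m\lesssim m^{n-2}$ from \eqref{Zmless} together with $1\le S_m\le Cm^{n/2-1}$ from \eqref{Smbound}, the $m$-th term is $\le C\,m^{s+t+1}m^{n/2-1}m^{n/2-1}m^{n-2}|x|^m|f(y)|=C'\,m^{\,2n+s+t-3}|x|^m|f(y)|$, and $\sum_m m^{2n+s+t-3}|x|^m<\infty$ since $|x|<1$; this majorant is $|x|$-dependent but $y$-integrable against $d\nu_s$ because $f\in L^1_s$, and it dominates every partial sum, so dominated convergence applies. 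Everything else is a direct unwinding of the definitions \eqref{KernelExp}, \eqref{cm}, \eqref{DefineDst} and the orthogonality relation \eqref{orthog}; no new ideas beyond the estimates already collected in Section \ref{SDiffOp} are required.
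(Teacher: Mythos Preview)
Your proposal is correct and follows essentially the same route as the paper: for fixed $x$ you interchange the series \eqref{KernelExp} with the integral (the paper justifies this simply by the uniform convergence in $y$ noted after \eqref{cmasym}, while you write out explicit termwise bounds), identify the resulting series as the $\mathcal H$-harmonic expansion of $P_sf$, apply $D^t_s$ termwise, and reassemble; for (ii) both you and the paper invoke $P_sf=f$ for $f\in L^1_s\cap\mathcal H(\mathbb B)$, the paper citing \cite[Lemma~7.1]{U1}. One small slip: your $p_m(x)=S_m(|x|)^{-1}\int_{\mathbb B}S_m(|y|)Z_m(x,y)f(y)\,d\nu_s(y)$ should not carry the factor $S_m(|x|)^{-1}$---the integral itself is already the homogeneous harmonic polynomial in $x$, so with your definition $q_m=c_m(s)p_m$ would \emph{not} lie in $H_m(\mathbb R^n)$; drop that factor (as the paper does in \eqref{SeriesofPs}) and everything is clean.
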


\begin{proof}
For fixed $x\in\mathbb B$, the series in \eqref{KernelExp} uniformly
converges for $y\in\mathbb B$.
Thus
\begin{align}\label{SeriesofPs}
\int_{\mathbb B} \mathcal R_s(x,y)f(y)\,d\nu_s(y)
&=\sum_{m=0}^\infty c_m(s)S_m(\lvert x\rvert)
\int_{\mathbb B}Z_m(x,y)S_m(\lvert y\rvert)f(y)\,d\nu_s(y)\notag\\
&=:\sum_{m=0}^\infty c_m(s)S_m(\lvert x\rvert)q_m(x).
\end{align}
The function $q_m$ is in $H_m(\mathbb R^n)$ and the series
in \eqref{SeriesofPs} absolutely and uniformly converges on
compact subsets of $\mathbb B$.
This follows from \eqref{Smbound} and \eqref{cmasym}, and
the fact that
$\lvert Z_m(x,y)\rvert\lesssim \lvert x\rvert^m m^{n-2}$ by \eqref{Zmless}.
Thus, the series in \eqref{SeriesofPs} is the (unique) series expansion
and by \eqref{DefineDst},
$D^t_s\int_{\mathbb B} \mathcal R_s(x,y)f(y)d\nu_s(y)
=\sum_{m=0}^\infty c_m(s+t)S_m(\lvert x\rvert)q_m(x)$.
This series equals
$\int_{\mathbb B}\mathcal R_{s+t}(x,y)f(y)d\nu_s(y)$
by the same reasoning.

If $f\in L^1_s\cap\mathcal H(\mathbb B)$, then $P_sf=f$ because
the reproducing property in \eqref{Reproduce} holds also for
$f\in \mathcal B^1_\alpha$ (\cite[Lemma 7.1]{U1}).
\end{proof}

The following upper estimates of the reproducing kernels
$\mathcal R_\alpha$ have been obtained in
\cite[Theorem 1.2]{U1}.
Here, $\nabla_x$ means that the gradient is taken with respect to $x$.
\begin{lemma}\label{LEstR}
Let $\alpha>-1$.
There exists a constant $C=C(n,\alpha)>0$ such that
for all $x,y\in\mathbb B$,
\begin{enumerate}
  \item[(a)] $\lvert\mathcal R_\alpha(x,y)\rvert
  \leq\dfrac{C}{[x,y]^{\alpha+n}}$,
  \item[(b)] $\lvert\nabla_x \mathcal R_\alpha(x,y)\rvert
  \leq\dfrac{C}{[x,y]^{\alpha+n+1}}$.
\end{enumerate}
\end{lemma}

These estimates lead to the following projection theorem
(see \cite[Theorem 1.1]{U1}.

\begin{lemma}\label{Lproject}
Let $1\leq p<\infty$ and $\alpha,\gamma>-1$.
The operator $P_\gamma\colon L^p_\alpha\to\mathcal B^p_\alpha$
is bounded if and only if $\alpha+1<p(\gamma+1)$.
In this case $P_\gamma f=f$ for
$f\in L^p_\alpha\cap\mathcal H(\mathbb B)$.
\end{lemma}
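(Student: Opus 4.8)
The plan is to obtain sufficiency from Schur's test, reading off the sharp exponent from the Forelli--Rudin estimate of Lemma~\ref{LIntonSandB}, and to treat the reproducing identity and the necessity separately. First I would check that the range is correct: for fixed $x$ the series \eqref{KernelExp} converges uniformly in $y$, so $\mathcal R_\gamma(x,\cdot)$ is bounded and $P_\gamma\phi$ is well defined; since $\mathcal R_\gamma(\cdot,y)$ is $\mathcal H$-harmonic and, by Lemma~\ref{LEstR} and \eqref{KernelExp}, locally uniformly controlled in $x$, the function $P_\gamma\phi$ is a locally uniform superposition of $\mathcal H$-harmonic functions and hence lies in $\mathcal H(\mathbb B)$. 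Once the $L^p_\alpha$ bound is in hand this gives $P_\gamma\phi\in\mathcal B^p_\alpha$. By Lemma~\ref{LEstR}(a),
\[
\lvert P_\gamma\phi(x)\rvert\le C\int_{\mathbb B}\frac{\lvert\phi(y)\rvert}{[x,y]^{\gamma+n}}\,d\nu_\gamma(y)=:C\,T\lvert\phi\rvert(x),
\]
so it is enough to bound the positive operator $T$ on $L^p_\alpha$.

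For $1<p<\infty$ I would apply Schur's test to $T$ on $L^p(\nu_\alpha)$ with the weight $h(y)=(1-\lvert y\rvert^2)^{-b}$, where $b$ is chosen at the end. Writing $d\nu_\gamma=(1-\lvert y\rvert^2)^{\gamma-\alpha}\,d\nu_\alpha$, the two Schur integrals reduce to
\[
\int_{\mathbb B}\frac{(1-\lvert y\rvert^2)^{\gamma-bp'}}{[x,y]^{\gamma+n}}\,d\nu(y)
\quad\text{and}\quad
(1-\lvert y\rvert^2)^{\gamma-\alpha}\int_{\mathbb B}\frac{(1-\lvert x\rvert^2)^{\alpha-bp}}{[x,y]^{\gamma+n}}\,d\nu(x).
\]
Each is evaluated by Lemma~\ref{LIntonSandB} in the $t>0$ regime, returning $(1-\lvert x\rvert^2)^{-bp'}=h(x)^{p'}$ and $(1-\lvert y\rvert^2)^{-bp}=h(y)^{p}$ respectively, provided the hypotheses $s>-1$, $t>0$ hold in both applications. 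These amount to $0<b$, $(\alpha-\gamma)/p<b$, $b<(\gamma+1)/p'$, and $b<(\alpha+1)/p$; all but one comparison of a lower with an upper bound reduce to $\alpha>-1$ or $\gamma>-1$, while the single remaining comparison $(\alpha-\gamma)/p<(\gamma+1)/p'$ is equivalent, after clearing denominators, to $\alpha+1<p(\gamma+1)$. Hence an admissible $b$ exists exactly under the hypothesis, and Schur's test gives $\|T\|_{L^p_\alpha\to L^p_\alpha}<\infty$. The case $p=1$ I would do by Tonelli: $\|T\lvert\phi\rvert\|_{L^1_\alpha}=\int_{\mathbb B}\lvert\phi(y)\rvert(1-\lvert y\rvert^2)^{\gamma}\bigl(\int_{\mathbb B}[x,y]^{-(\gamma+n)}(1-\lvert x\rvert^2)^{\alpha}\,d\nu(x)\bigr)\,d\nu(y)$, and Lemma~\ref{LIntonSandB} shows the inner integral is $\lesssim(1-\lvert y\rvert^2)^{\alpha-\gamma}$ precisely when $\gamma>\alpha$, i.e.\ $\alpha+1<\gamma+1$, yielding the bound.

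For the reproducing identity, if $f\in L^p_\alpha\cap\mathcal H(\mathbb B)$ then the hypothesis $\alpha+1<p(\gamma+1)$ forces $f\in L^1_\gamma$ (Hölder together with Lemma~\ref{LIntonSandB}), so $P_\gamma f$ is defined. Expanding $f=\sum_m S_m(\lvert\cdot\rvert)q_m$ and inserting \eqref{KernelExp}, I would integrate termwise as in the proof of Lemma~\ref{LDstInt}: for the product of the degree-$m$ piece of the kernel with $f$, only the degree-$m$ part of $f$ contributes after integrating over spheres (by the orthogonality \eqref{orthog} and the zonal reproducing property \eqref{zonal}), and the surviving radial integral is exactly $1/c_m(\gamma)$ by \eqref{cm}. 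This collapses the double series to $\sum_m S_m(\lvert x\rvert)q_m(x)=f(x)$; equivalently one cites the extension of \eqref{Reproduce} to $\mathcal B^1_\gamma$ used in the proof of Lemma~\ref{LDstInt}.

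The necessity is the delicate direction, and I expect it to be the main obstacle. Two features block the easy arguments. First, radial test functions are useless: integrating $\mathcal R_\gamma(x,\cdot)$ against any radial density kills every term of \eqref{KernelExp} with $m\ge1$ by \eqref{orthog}, so $P_\gamma$ maps radial functions to constants. Second, a useful test function must therefore feel $\mathcal R_\gamma(x,y)$ with $x,y$ in \emph{different} boundary regions, where Lemma~\ref{LEstR} supplies only an upper bound and the real, symmetric kernel changes sign; moreover a purely near-diagonal test (for instance, indicators of a pseudo-hyperbolic ball) produces a ratio $\|P_\gamma\phi\|/\|\phi\|$ of order $1$ and cannot detect unboundedness. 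Thus the heart of the matter is quantitative off-diagonal control of $\mathcal R_\gamma$, which has to be extracted from the series \eqref{KernelExp} with $c_m(\gamma)>0$, the asymptotics \eqref{cmasym}, the bound \eqref{Smbound}, and the Poisson expansion \eqref{Poisson}. Granting such control, I would run the standard Forelli--Rudin scheme: test on functions concentrating toward $\partial\mathbb B$ and adjusted to the sign of the kernel, and let the concentration approach the boundary to force $\|P_\gamma\phi\|_{L^p_\alpha}/\|\phi\|_{L^p_\alpha}\to\infty$ on the complementary range $\alpha+1\ge p(\gamma+1)$, contradicting boundedness.
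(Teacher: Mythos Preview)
The paper does not prove this lemma: it is quoted from \cite[Theorem~1.1]{U1} (and the reproducing identity on $\mathcal B^1$ from \cite[Lemma~7.1]{U1}), so there is no in-paper argument to compare against. That said, your sufficiency argument is the standard one and is almost certainly what is done in \cite{U1}: the Schur-test computation you wrote out is correct, the four inequalities on $b$ collapse exactly to $\alpha+1<p(\gamma+1)$, and the $p=1$ case by Tonelli is fine. Your H\"older step showing $\mathcal B^p_\alpha\subset\mathcal B^1_\gamma$ under the hypothesis is also correct, after which the reproducing identity follows either from the series manipulation you describe or by direct citation.

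On necessity, you have correctly located the obstruction: radial tests are killed by orthogonality, and near-diagonal tests give only $O(1)$ ratios, so one needs genuine lower bounds on $\|\mathcal R_\gamma(x,\cdot)\|$ (or on an integrated version thereof) as $\lvert x\rvert\to1^-$. Such lower bounds are established in \cite{U1} (cf.\ also the diagonal estimate $\mathcal R_\gamma(a,a)\sim(1-\lvert a\rvert^2)^{-(\gamma+n)}$ invoked in the proof of Lemma~\ref{LkernelinB}); once you have, say, $\|\mathcal R_\gamma(x,\cdot)\|_{L^{p'}_\alpha}\to\infty$ on the complementary range, testing $P_\gamma$ against suitable $L^p_\alpha$ functions via duality finishes the job. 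Your sketch stops short of this, but the diagnosis is accurate and the missing ingredient is external to the present paper anyway.
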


\section{Elementary Properties of the Bloch Space}

We first mention a few basic basic properties of $\mathcal B$ and
$\mathcal B_0$.
The verifications are omitted as they are straightforward and are
similar to the holomorphic or the Euclidean harmonic case.
The space $\mathcal B$ is a Banach space with respect to the norm
$\|\cdot\|_{\mathcal B}$ and $\mathcal B_0$ is a closed subspace of
$\mathcal B$.
The seminorm $p_{\mathcal B}$ is M\"obius invariant, i.e.,
$p_{\mathcal B}(f\circ\psi)=p_{\mathcal B}(f)$ for all
$\psi\in\mathcal M(\mathbb B)$.
If $f\in\mathcal B$, then for every $x,y\in\mathbb B$,
\begin{equation}\label{BlochDif}
\lvert f(x)-f(y)\rvert
\leq \frac{1}{2}\,p_{\mathcal B}(f)\beta(x,y).
\end{equation}
In particular, taking $y=0$ and using
$\beta(x,0)\leq 1+\log 1/(1-\lvert x\rvert)$ shows
\begin{equation}\label{Blochpoint}
\lvert f(x)\rvert
\leq\|f\|_{\mathcal B}\Big(1+\log\frac{1}{1-\lvert x\rvert}\Big)
\qquad (f\in\mathcal B, x\in \mathbb B).
\end{equation}

There are various results in \cite{GJ} and \cite{Ja2} that show
that $\mathcal H$-harmonic functions can have different behaviours
depending on whether the dimension $n$ is odd or even.
We show here one more difference.
Let $q_m\in H_m(\mathbb S)$.
If the dimension $n$ is even, $S_m(\lvert x\rvert)$ is a
polynomial and the Poisson extension
$P_h[q_m](x)=S_m(\lvert x\rvert)q_m(x)$ is
an $\mathcal H$-harmonic polynomial.
This is not true when the dimension $n$ is odd.
In fact, in this case a non-constant polynomial can not be
$\mathcal H$-harmonic on $\mathbb B$.

\begin{lemma}\label{Lpolyhar}
In odd dimensions, there are no non-constant polynomials in
$\mathcal H(\mathbb B)$.
\end{lemma}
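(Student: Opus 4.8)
The plan is to suppose, for contradiction, that $p$ is a non-constant polynomial that is $\mathcal H$-harmonic on $\mathbb B$, and extract a contradiction from the asymptotic behaviour of the factors $S_m$ in odd dimensions. First I would write $p=\sum_{m=0}^{M} p_m$ as a sum of homogeneous parts with $p_M\neq 0$ and $M\geq 1$. Decompose each homogeneous piece into solid (Euclidean) harmonics: $p_m(x)=\sum_{j} \lvert x\rvert^{2j} q_{m-2j}(x)$ with $q_k\in H_k(\mathbb R^n)$, so that $p$ can be rewritten as a \emph{finite} sum $p(x)=\sum_{k=0}^{M} r_k(\lvert x\rvert) q_k(x)$, where each $r_k$ is a polynomial in $\lvert x\rvert^2$ coming from collecting the radial factors, and the top one satisfies $r_M\equiv\text{const}\neq 0$ (since the degree-$M$ harmonic component of $p_M$ is $q_M$, which is nonzero because $p_M$ is a nonzero homogeneous polynomial whose harmonic projection of top degree is itself). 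Since $p$ is $\mathcal H$-harmonic and bounded (hence in every $\mathcal B^2_\alpha$, or simply continuous on $\overline{\mathbb B}$), it is the Poisson extension of its boundary values, and uniqueness of the expansion $f(x)=\sum_{m} S_m(\lvert x\rvert) q_m(x)$ forces $r_k(\lvert x\rvert)=S_k(\lvert x\rvert)$ for each $k$ with $q_k\neq 0$; in particular $S_M(\lvert x\rvert)$ must be a polynomial in $\lvert x\rvert$.

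The contradiction then comes from the hypergeometric representation \eqref{Smr}: for $m\geq 1$ and $n$ odd, $F(m,1-\tfrac n2;m+\tfrac n2;z)$ is a hypergeometric series in which the parameter $b=1-\tfrac n2$ is a negative \emph{half-integer}, never a non-positive integer, so the series does not terminate and $S_m$ is not a polynomial — indeed its only singularity is at $z=1$, and as $r\to 1^-$ it behaves like a constant (it is normalized to $S_m(1)=1$), but $F(m,1-\tfrac n2;m+\tfrac n2;z)$ has a genuine algebraic branch point at $z=1$ contributing a term $\sim (1-z)^{(n/2)-1}$, which for odd $n$ is a non-integer power. More concretely, I would invoke the standard connection formula for ${}_2F_1$ near $z=1$: since $c-a-b = (m+\tfrac n2)-m-(1-\tfrac n2)=n-1$ is a positive integer, the relevant singular term in the expansion of $F$ about $z=1$ is of the form $(1-z)^{c-a-b}\log(1-z)$ when $c-a-b$ is a positive integer — wait, more carefully, one should track whether the logarithmic term actually appears; the cleaner route is simply: a polynomial in $\lvert x\rvert$ that is a function of $\lvert x\rvert^2$ is a polynomial in $r^2$, hence $F(m,1-\tfrac n2;m+\tfrac n2;r^2)$ would be a polynomial in $r^2$, i.e. the hypergeometric series would terminate, which happens iff some parameter in the numerator is a non-positive integer; for $m\geq 1$ neither $m$ nor $1-\tfrac n2$ (with $n$ odd, $\tfrac n2\notin\mathbb Z$) is a non-positive integer, so the series is infinite and $S_m$ is genuinely non-polynomial. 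This contradicts $r_M=S_M$ being a polynomial, so $M=0$ and $p$ is constant.

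The main obstacle is the bookkeeping in the first paragraph: one must be careful that reducing $p$ to the form $\sum_k r_k(\lvert x\rvert)q_k(x)$ with the $q_k$'s genuinely of different degrees lets us apply uniqueness of the $\mathcal H$-harmonic expansion term-by-term, and in particular that the top-degree term survives with a nonzero constant radial factor $r_M$ forced to equal $S_M$. Everything after that is a clean statement about when a Gauss hypergeometric series terminates. I would also remark that in \emph{even} dimensions $b=1-\tfrac n2$ \emph{is} a non-positive integer exactly when $n\geq 2$ is even, so the series terminates and $S_m$ is a polynomial — this is the dividing line and explains why the lemma is special to odd $n$.
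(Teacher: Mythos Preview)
Your overall strategy matches the paper's: decompose $p$ into harmonic components, compare with the $\mathcal H$-harmonic expansion $\sum_k S_k(\lvert x\rvert)Q_k(x)$ obtained as the Poisson extension of $p\vert_{\mathbb S}$, and force some $S_i$ with $i\geq 1$ to be a polynomial, contradicting non-termination of the hypergeometric series in odd dimensions. But the argument as written has a real gap at the step where you single out the top degree. You assert that the harmonic component $q_M$ of $p_M$ is nonzero ``because $p_M$ is a nonzero homogeneous polynomial whose harmonic projection of top degree is itself''; that justification is simply false (take $p_M=\lvert x\rvert^2$, whose degree-$2$ harmonic part vanishes). Worse, under your standing hypothesis the conclusion $q_M\neq 0$ is itself false: equating the degree-$M$ spherical components of the two expressions for $p(r\zeta)$ gives $q_M(\zeta)=S_M(r)\,q_M(\zeta)$ for all $0<r<1$, and since $S_M$ is non-constant for $M\geq 1$ and $n\geq 3$, this forces $q_M\equiv 0$. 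So working only at the top degree cannot succeed.

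The repair, which is exactly what the paper does, is to note that a non-constant $\mathcal H$-harmonic $p$ cannot have constant boundary values, so in $p\vert_{\mathbb S}=\sum_k Q_k$ some $Q_i$ with $i\geq 1$ is nonzero; comparing degree-$i$ components at a point $\eta$ with $Q_i(\eta)\neq 0$ then yields $S_i(r)Q_i(\eta)=\sum_j r^{2j}q_i^{(i+2j)}(\eta)$, a polynomial in $r$, and your termination argument finishes. This also shows why your intermediate form $p(x)=\sum_k r_k(\lvert x\rvert)q_k(x)$ is not available: for $k<M$ the degree-$k$ piece is $\sum_j \lvert x\rvert^{2j}q_k^{(k+2j)}(x)$, a combination of \emph{different} degree-$k$ harmonics that need not factor as a radial function times a single spherical harmonic. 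The paper sidesteps this by integrating $p(r\zeta)$ against the zonal harmonic $Z_i(\eta,\cdot)$ on $\mathbb S$, which extracts the degree-$i$ component cleanly without assuming any such factorization.
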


\begin{proof}
Suppose that $p$ is a polynomial of degree $M\geq1$ and
$p\in\mathcal H(\mathbb B)$.
Then $p=\sum_{m=0}^M p^{(m)}$, where $p^{(m)}$ is a homogeneous
polynomial of degree $m$ and
by \cite[Theorem 5.7]{ABR}, each $p^{(m)}$ can be written in the form
\begin{equation*}
p^{(m)}(x)=\sum_{j=0}^{[m/2]}\lvert x\rvert^{2j}q_{m-2j}^{(m)}(x),
\end{equation*}
with $q_{m-2j}^{(m)}\in H_{m-2j}(\mathbb R^n)$.
Thus
\begin{equation}\label{poly1}
p(x)=\sum_{m=0}^M\sum_{j=0}^{[m/2]}\lvert x\rvert^{2j}q_{m-2j}^{(m)}(x)
=\sum_{k=0}^M\sum_{j=0}^{[(M-k)/2]}\lvert x\rvert^{2j}q_{k}^{(k+2j)}(x),
\end{equation}
where we make the change of index $k=m-2j$.
On the boundary $\mathbb S$,
\begin{equation*}
p(\zeta)=\sum_{k=0}^M\sum_{j=0}^{[(M-k)/2]}q_{k}^{(k+2j)}(\zeta)
=:\sum_{k=0}^M Q_k(\zeta)
\qquad (\zeta\in\mathbb S),
\end{equation*}
with $Q_k\in H_k(\mathbb S)$, and since $p$ is $\mathcal H$-harmonic,
$p=P_h[p\vert_{\mathbb S}]$ and so
\begin{equation}\label{poly2}
p(x)=\sum_{k=0}^M S_k(\lvert x\rvert)Q_k(x)
\qquad (x\in\mathbb B).
\end{equation}
Because $p$ is non-constant, there exist $1\leq i\leq M$ and
$\eta\in\mathbb S$ such that $Q_i(\eta)\neq 0$.
For $0\leq r\leq 1$, we compute the integral
\begin{equation*}
I=\int_{\mathbb S}p(r\zeta)Z_i(\eta,\zeta)\,d\sigma(\zeta)
\end{equation*}
in two ways, using \eqref{poly1} and then \eqref{poly2}.
First, by \eqref{poly1},
\begin{equation*}
I=\sum_{k=0}^M\sum_{j=0}^{[(M-k)/2]}r^{k+2j}
\int_{\mathbb S}q_{k}^{(k+2j)}(\zeta)Z_i(\eta,\zeta)\,d\sigma(\zeta)
=r^{i}\sum_{j=0}^{[(M-i)/2]} r^{2j}q_i^{(i+2j)}(\eta),
\end{equation*}
by \eqref{orthog} and \eqref{zonal}.
Next, by \eqref{poly2},
\begin{equation*}
I=\sum_{k=0}^M S_k(r) r^k
\int_{\mathbb S}Q_k(\zeta)Z_i(\eta,\zeta)\,d\sigma(\zeta)
=r^{i}S_i(r)Q_i(\eta).
\end{equation*}
Combining these we see that
$S_i(r)Q_i(\eta)=\sum_{j=0}^{[(M-i)/2]} r^{2j}q_i^{(i+2j)}(\eta)$
for all $0<r\leq 1$.
Because $Q_i(\eta)\neq 0$, this shows that $S_i(r)$ is a
polynomial of $r$.
This is a contradiction because when the dimension $n$ is odd,
$S_i$ is not a polynomial for $i\geq 1$ since its
hypergeometric series do not terminate.
\end{proof}

Let $q_m\in H_m(\mathbb R^n)$.
When the dimension $n$ is even,
$P_h[q_m\vert_{\mathbb S}](x)=S_m(\lvert x\rvert)q_m(x)$ is
a polynomial and so is in $\mathcal B_0$.
It is also in $\mathcal B_0$ when the dimension is odd.
This follows from the following two elementary facts about
hypergeometric series.
First,
\begin{equation}\label{DerivHyp}
\frac{d}{dz}F(a,b;c;z)=\frac{ab}{c}F(a+1,b+1;c+1;z)
\end{equation}
and second, if $\Re\{c-a-b\}>0$, then $F(a,b;c;z)$ uniformly converges
and so is bounded on the closed disk $\{z:\lvert z\rvert\leq 1\}$.

\begin{lemma}\label{LSmpminB0}
If $q_m\in H_m(\mathbb R^n)$, then
$P_h[q_m\vert_{\mathbb S}](x)=S_m(\lvert x\rvert)q_m(x)$ is in $\mathcal B_0$.
\end{lemma}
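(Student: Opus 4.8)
The plan is to bound the Euclidean (hence hyperbolic) gradient of $g(x):=S_m(\lvert x\rvert)q_m(x)=P_h[q_m\vert_{\mathbb S}](x)$ directly and to show that it decays like $1-\lvert x\rvert^2$ near the boundary. First I would write $S_m(r)=\widetilde S_m(r^2)$, where $\widetilde S_m(t)=F(m,1-\tfrac n2;m+\tfrac n2;t)\big/F(m,1-\tfrac n2;m+\tfrac n2;1)$, so that $g(x)=\widetilde S_m(\lvert x\rvert^2)\,q_m(x)$ is visibly smooth on $\mathbb B$, and compute
\begin{equation*}
\nabla g(x)=2\,\widetilde S_m'(\lvert x\rvert^2)\,q_m(x)\,x+\widetilde S_m(\lvert x\rvert^2)\,\nabla q_m(x).
\end{equation*}
Since $q_m$ and $\nabla q_m$ are polynomials they are bounded on $\overline{\mathbb B}$, and $\widetilde S_m=S_m$ is bounded by \eqref{Smbound}; hence the whole matter reduces to showing that $\widetilde S_m'$ stays bounded on $[0,1]$.

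To control $\widetilde S_m'$ I would invoke \eqref{DerivHyp} with $a=m$, $b=1-\tfrac n2$, $c=m+\tfrac n2$, which gives
\begin{equation*}
\widetilde S_m'(t)=\frac{1}{F(m,1-\tfrac n2;m+\tfrac n2;1)}\cdot\frac{m\bigl(1-\tfrac n2\bigr)}{m+\tfrac n2}\,F\bigl(m+1,\,2-\tfrac n2;\,m+\tfrac n2+1;\,t\bigr).
\end{equation*}
For the shifted parameters one has $(m+\tfrac n2+1)-(m+1)-(2-\tfrac n2)=n-2$. When $n\ge 3$ this is positive, so by the stated convergence criterion the series $F(m+1,2-\tfrac n2;m+\tfrac n2+1;\cdot)$ converges uniformly on $\{\lvert z\rvert\le 1\}$ and is in particular bounded on $[0,1]$; when $n=2$ the numerical factor $m(1-\tfrac n2)/(m+\tfrac n2)$ vanishes (indeed $S_m\equiv 1$ in that case and $g$ is a harmonic polynomial). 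Either way $\lvert\widetilde S_m'(t)\rvert\le C(n,m)$ for $t\in[0,1]$.

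Putting the two steps together, for every $x\in\mathbb B$ one obtains $(1-\lvert x\rvert^2)\lvert\nabla g(x)\rvert\le C(n,m)(1-\lvert x\rvert^2)$, which is bounded (so $p_{\mathcal B}(g)<\infty$) and tends to $0$ as $\lvert x\rvert\to 1^-$; together with $g\in\mathcal H(\mathbb B)$ from \eqref{solnd} this gives $g\in\mathcal B_0$. The one point needing attention is that the exponent $\Re\{c-a-b\}$ remains $\ge 0$ after the single differentiation, i.e. that $n-2\ge 0$; apart from that there is no real obstacle, the two cited hypergeometric facts being exactly what is needed.
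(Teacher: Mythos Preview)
Your proof is correct and follows essentially the same route as the paper: compute the gradient of $S_m(\lvert x\rvert)q_m(x)$, apply the hypergeometric derivative formula \eqref{DerivHyp}, and use the boundedness criterion $\Re\{c-a-b\}>0$ (here $n-2>0$) for the resulting $F(m+1,2-\tfrac n2;m+\tfrac n2+1;\cdot)$, treating $n=2$ separately via the vanishing prefactor. The only cosmetic difference is your introduction of $\widetilde S_m(t)=S_m(\sqrt t)$ to streamline the chain rule, whereas the paper differentiates with respect to $x_i$ directly; the substance is identical.
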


\begin{proof}
The case $n=2$ is obvious and we assume $n\geq 3$.
By \eqref{DerivHyp},
\begin{align*}
\frac{\partial}{\partial x_i}S_m(\lvert x\rvert)q_m(x)
={}&2x_i\frac{m(1-\tfrac{1}{2}n)}{m+\tfrac{1}{2}n}\,
\frac{F(m+1,2-\tfrac{1}{2}n;m+\tfrac{1}{2}n+1;\lvert x\rvert^2)}
{F(m,1-\tfrac{1}{2}n;m+\tfrac{1}{2}n;1)}\,q_m(x)\\
&+S_m(\lvert x\rvert)\frac{\partial}{\partial x_i}q_m(x).
\end{align*}
The hypergeometric function in the first term is bounded since
$\Re\{c-a-b\}=n-2>0$.
Since the second term is also bounded, the result follows.
\end{proof}

We next show that Bergman projections of polynomials are in $\mathcal B_0$.

\begin{lemma}\label{LBergmanpoly}
$P_\alpha p\in\mathcal B_0$ for every polynomial $p$ and $\alpha>-1$.
\end{lemma}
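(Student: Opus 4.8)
The plan is to reduce to the building blocks handled by Lemma \ref{LSmpminB0}. By linearity it suffices to treat a single homogeneous term, and by the decomposition $p(x)=\sum_{j}\lvert x\rvert^{2j}q_{m-2j}(x)$ with $q_{m-2j}\in H_{m-2j}(\mathbb R^n)$ (cf. \cite[Theorem 5.7]{ABR}, as used in the proof of Lemma \ref{Lpolyhar}), it is enough to show that $P_\alpha\big(\lvert x\rvert^{2j}q_k(x)\big)\in\mathcal B_0$ for each $k$ and $j$ with $q_k\in H_k(\mathbb R^n)$. First I would compute this projection explicitly using the series expansion \eqref{KernelExp} of $\mathcal R_\alpha$ together with the zonal-harmonic reproducing identity \eqref{zonal} and the orthogonality \eqref{orthog}. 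Writing the integral in polar coordinates $y=r\zeta$, the $\zeta$-integral $\int_{\mathbb S}Z_m(x,\zeta)q_k(\zeta)\,d\sigma(\zeta)$ vanishes unless $m=k$ and otherwise reproduces $q_k(x)$ (up to the homogeneity factor $\lvert x\rvert^{\,?}$ absorbed into $Z_k$), so only the $m=k$ term of \eqref{KernelExp} survives. The upshot is that $P_\alpha\big(\lvert x\rvert^{2j}q_k(x)\big)=\lambda\, S_k(\lvert x\rvert)q_k(x)$ for an explicit constant $\lambda=\lambda(k,j,\alpha,n)$ given by a one-dimensional radial integral of the form $n\int_0^1 r^{2k+2j+n-1}S_k(r)\,(1-r^2)^\alpha\,dr$ times $c_k(\alpha)$; this constant is finite since $S_k$ is bounded on $[0,1]$ by \eqref{Smbound} and $\alpha>-1$.

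Once this identity is in hand, the conclusion is immediate: $S_k(\lvert x\rvert)q_k(x)=P_h[q_k\vert_{\mathbb S}](x)$ is in $\mathcal B_0$ by Lemma \ref{LSmpminB0}, and $\mathcal B_0$ is a vector space, so the finite linear combination $P_\alpha p$ is in $\mathcal B_0$ as well. I would also note in passing that $P_\alpha p\in\mathcal H(\mathbb B)$ automatically, since $P_\alpha$ maps into $\mathcal H$-harmonic functions (each term of the resulting series is of the form \eqref{solnd}).

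The only real point requiring care is the computation of the projection itself — justifying the interchange of the sum in \eqref{KernelExp} with the integral over $\mathbb B$, and correctly tracking the homogeneity so that $Z_k(x,\zeta)$ evaluated against $q_k$ on the sphere returns $q_k(x)$ rather than $q_k(x/\lvert x\rvert)$. The interchange is legitimate because for fixed $x\in\mathbb B$ the series in \eqref{KernelExp} converges uniformly in $y\in\mathbb B$ (as recorded after \eqref{cmasym}), and $\lvert x\rvert^{2j}q_k(x)$ is bounded on $\mathbb B$; this is exactly the same maneuver already performed in the proof of Lemma \ref{LDstInt}. So I do not expect a genuine obstacle here — the lemma follows by a direct calculation plus an appeal to Lemma \ref{LSmpminB0}.
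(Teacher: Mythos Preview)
Your proposal is correct and follows essentially the same route as the paper's proof: reduce by linearity and the decomposition $p=\sum_j\lvert x\rvert^{2j}q_{m-2j}$ to terms of the form $\lvert x\rvert^{2j}q_k$, then compute $P_\alpha(\lvert x\rvert^{2j}q_k)$ by inserting the series expansion \eqref{KernelExp}, integrating in polar coordinates, and using \eqref{orthog}--\eqref{zonal} to collapse the sum to the single term $CS_k(\lvert x\rvert)q_k(x)$, which lies in $\mathcal B_0$ by Lemma~\ref{LSmpminB0}. The justifications you flag (uniform convergence of the kernel series for the interchange, and the homogeneity bookkeeping for $Z_k$) are exactly the ones the paper uses.
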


\begin{proof}
We can assume $p$ is homogeneous.
By \cite[Theorem 5.7]{ABR} again, $p$ can be written in the form
$p=q_m+\lvert x\rvert^2 q_{m-2}+\cdots+\lvert x\rvert^{2k} q_{m-2k}$,
where $k=[m/2]$ and $q_j\in H_j(\mathbb R^n)$.
Thus, it suffices to show that $P_\alpha(\lvert x\rvert^k q_j)\in\mathcal B_0$
for every $k\geq 0$ and $q_j\in H_j(\mathbb R^n)$.
Now, using the uniform convergence of the series  in \eqref{KernelExp}, integrating
in polar coordinates and then using \eqref{orthog} and \eqref{zonal}, we obtain
\begin{align*}
P_{\alpha}&\bigl(\lvert x\rvert^k q_j\bigr)(x)
=\int_\mathbb B\mathcal R_\alpha(x,y)\lvert y\rvert^k q_j(y)\,d\nu_\alpha(y)\\
&=\sum_{m=0}^\infty c_m(\alpha)S_m(\lvert x\rvert)
\int_0^1 nr^{n-1}S_m(r)r^{k+m+j}(1-r^2)^\alpha
\int_{\mathbb S} Z_m(x,\zeta)q_j(\zeta)\,d\sigma(\zeta)dr\\
&=c_j(\alpha)S_j(\lvert x\rvert)q_j(x)\int_0^1 nr^{n-1}S_j(r)r^{k+2j}(1-r^2)^\alpha\,dr\\
&=CS_j(\lvert x\rvert)q_j(x),
\end{align*}
which belongs to $\mathcal B_0$ by Lemma \ref{LSmpminB0}.
\end{proof}

\begin{remark}\label{Rpolyproj}
The above proof shows also that for every polynomial $p$ and $\alpha>-1$,
$P_\alpha p\in\text{span}\bigl\{S_m(\lvert x\rvert)q_m(x)
\,\vert\, q_m\in H_m(\mathbb R^n), m=0,1,2\dotsc \bigr\}$.
\end{remark}

\section{Projections onto the Bloch and the Little Bloch Space}

The aim of this is section is to prove Theorem \ref{Tproj}.
The main issue is to show that the projection
$P_\alpha\colon L^\infty\to\mathcal B$ is onto.
To achieve this we first characterize $\mathcal B$ and $\mathcal B_0$
in terms of the differential operators $D^t_s$.

\begin{lemma}\label{LPintoB}
For every $\alpha>-1$, $P_\alpha\colon L^\infty(\mathbb B)\to\mathcal B$
is bounded.
In addition, if $f\in C(\overline{\mathbb B})$,
then $P_\alpha f\in\mathcal B_0$.
\end{lemma}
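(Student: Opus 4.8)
The plan is to bound $\nabla P_\alpha\phi$ pointwise using the kernel estimates of Lemma \ref{LEstR} and the integral estimate of Lemma \ref{LIntonSandB}, and then to obtain the little Bloch assertion by a density argument. For the first claim, fix $\phi\in L^\infty(\mathbb B)$ (so $\phi\in L^1_\alpha$ since $\nu_\alpha$ is finite). Substituting the series \eqref{KernelExp} into the definition of $P_\alpha\phi$ and interchanging the sum and the integral — legitimate, for $x$ in a fixed compact subset of $\mathbb B$, by the uniform convergence of \eqref{KernelExp} in $y\in\mathbb B$ (cf.\ \eqref{cmasym}) together with $\phi\in L^\infty$ and $\nu_\alpha(\mathbb B)<\infty$ — one obtains, exactly as in the proof of Lemma \ref{LDstInt}, a representation $P_\alpha\phi(x)=\sum_{m\geq 0}c_m(\alpha)S_m(\lvert x\rvert)q_m(x)$ with $q_m\in H_m(\mathbb R^n)$, the series converging uniformly on compact subsets of $\mathbb B$; in particular $P_\alpha\phi\in\mathcal H(\mathbb B)$. (One may also simply quote \cite[Theorem 1.5]{U1} for this.) Differentiating under the integral sign gives $\nabla_x P_\alpha\phi(x)=\int_{\mathbb B}\nabla_x\mathcal R_\alpha(x,y)\,\phi(y)\,d\nu_\alpha(y)$, so by Lemma \ref{LEstR}(b),
\[
(1-\lvert x\rvert^2)\lvert\nabla P_\alpha\phi(x)\rvert
\leq C\|\phi\|_\infty\,(1-\lvert x\rvert^2)
\int_{\mathbb B}\frac{(1-\lvert y\rvert^2)^\alpha}{[x,y]^{\alpha+n+1}}\,d\nu(y).
\]
Applying Lemma \ref{LIntonSandB} with $s=\alpha>-1$ and $t=1>0$ (so that $n+s+t=\alpha+n+1$), the last integral is $\lesssim(1-\lvert x\rvert^2)^{-1}$; hence $p_{\mathcal B}(P_\alpha\phi)\leq C\|\phi\|_\infty$. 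Since $[0,y]=1$ and $\nu_\alpha$ is finite, also $\lvert P_\alpha\phi(0)\rvert\leq C\|\phi\|_\infty$, and therefore $\|P_\alpha\phi\|_{\mathcal B}\leq C\|\phi\|_\infty$.

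For the little Bloch assertion, recall that $\mathcal B_0$ is a closed subspace of $\mathcal B$ and that $P_\alpha p\in\mathcal B_0$ for every polynomial $p$ by Lemma \ref{LBergmanpoly}. Let $f\in C(\overline{\mathbb B})$. By the Stone--Weierstrass theorem there is a sequence of polynomials $p_k$ with $\|f-p_k\|_{L^\infty(\mathbb B)}\to 0$, and by the boundedness just proved, $\|P_\alpha f-P_\alpha p_k\|_{\mathcal B}\leq C\|f-p_k\|_{L^\infty}\to 0$. As each $P_\alpha p_k$ lies in the closed subspace $\mathcal B_0$, so does its limit $P_\alpha f$.

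The only slightly delicate point in the argument is the justification of differentiation under the integral sign and of the membership $P_\alpha\phi\in\mathcal H(\mathbb B)$; both are routine, since for $x$ in a compact set $K\subset\mathbb B$ one has $[x,y]\geq 1-\lvert x\rvert\geq\delta_K>0$ uniformly in $y\in\mathbb B$ by \eqref{xybig}, so Lemma \ref{LEstR} (or the series representation above) furnishes the required integrable majorants on $K$. Everything else is bookkeeping with Lemmas \ref{LEstR} and \ref{LIntonSandB} and the density of polynomials in $C(\overline{\mathbb B})$.
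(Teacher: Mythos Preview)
Your proof is correct and follows exactly the same approach as the paper: the boundedness into $\mathcal B$ comes from Lemma~\ref{LEstR}(b) combined with Lemma~\ref{LIntonSandB}, and the little Bloch assertion from the Stone--Weierstrass theorem together with Lemma~\ref{LBergmanpoly} and the closedness of $\mathcal B_0$. The paper's proof is more terse, but the underlying argument is identical.
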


\begin{proof}
The estimate in Lemma \ref{LEstR}(b) together with Lemma \ref{LIntonSandB}
immediately implies that $P_\alpha$ maps $L^\infty(\mathbb B)$ boundedly
into $\mathcal B$ (see \cite[Theorem 1.5]{U1}).
That $P_\alpha$ maps $C(\overline{\mathbb B})$ into $\mathcal B_0$ follows
from the Stone-Weierstrass theorem and Lemma \ref{LBergmanpoly}.
\end{proof}

We next obtain two estimates.
One is similar to Lemma \ref{LIntonSandB}, and
the other to \cite[Lemma 4.3]{U2}, but include an extra term
$\beta(x,y)$, the hyperbolic distance between $x$ and $y$.

\begin{lemma}\label{Lest1}
Let $s>-1$ and $t>0$.
\begin{enumerate}
\item[(i)] There exists a constant $C=C(n,s,t)>0$
such that for all $x\in\mathbb B$,
\begin{equation*}
\int_{\mathbb B}\frac{\beta(x,y)\,(1-\lvert y\rvert^2)^s}{[x,y]^{n+s+t}}\,
d\nu(y)\leq\frac{C}{(1-\lvert x\rvert^2)^t}.
\end{equation*}
\item[(ii)] Given $\varepsilon>0$, there exists $0<r_\varepsilon<1$ such that
for all $r$ with $r_\varepsilon<r<1$ and all $x\in\mathbb B$,
\begin{equation*}
\int_{\mathbb B\backslash E_r(x)}
\frac{\beta(x,y)\,(1-\lvert y\rvert^2)^s}{[x,y]^{n+s+t}}\,d\nu(y)
<\frac{\varepsilon}{(1-\lvert x\rvert^2)^t}.
\end{equation*}
\end{enumerate}
\end{lemma}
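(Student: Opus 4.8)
The plan is to remove the extra factor $\beta(x,y)$ by trading it against an arbitrarily small power of $1-|x|^2$ and $1-|y|^2$, and then to invoke the $\beta$-free estimate of Lemma~\ref{LIntonSandB}. The key preliminary is a pointwise bound on $\beta$: combining $\rho(x,y)=|\varphi_x(y)|$ with \eqref{MobiusIdnt} gives
\[
1-\rho(x,y)^2=\frac{(1-|x|^2)(1-|y|^2)}{[x,y]^2}\qquad(x,y\in\mathbb B),
\]
and since $\beta(x,y)=\log\frac{1+\rho(x,y)}{1-\rho(x,y)}\le\log\frac{4}{1-\rho(x,y)^2}$ while $\log(1/u)\le\epsilon^{-1}u^{-\epsilon}$ for $0<u\le1$, it follows that for every $\epsilon>0$ there is a constant $C_\epsilon$ with
\[
\beta(x,y)\le C_\epsilon\bigl(1-\rho(x,y)^2\bigr)^{-\epsilon}=C_\epsilon\,\frac{[x,y]^{2\epsilon}}{(1-|x|^2)^\epsilon(1-|y|^2)^\epsilon}\qquad(x,y\in\mathbb B).
\]

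For part~(i) I would fix $\epsilon$ with $0<\epsilon<\min\{t,s+1\}$, substitute this bound, and regroup so that the resulting $[x,y]$-power and $(1-|y|^2)$-weight again fit the hypotheses of Lemma~\ref{LIntonSandB}:
\[
\int_{\mathbb B}\frac{\beta(x,y)(1-|y|^2)^s}{[x,y]^{n+s+t}}\,d\nu(y)\le\frac{C_\epsilon}{(1-|x|^2)^\epsilon}\int_{\mathbb B}\frac{(1-|y|^2)^{s-\epsilon}}{[x,y]^{n+(s-\epsilon)+(t-\epsilon)}}\,d\nu(y).
\]
Here $s-\epsilon>-1$ and $t-\epsilon>0$, so Lemma~\ref{LIntonSandB} bounds the last integral by $C(1-|x|^2)^{-(t-\epsilon)}$; the two powers of $1-|x|^2$ multiply to the required $(1-|x|^2)^{-t}$, proving (i).

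For part~(ii) I would fix $\eta$ with $0<\eta<\min\{t,s+1\}$ and split off from the kernel a factor that is small precisely on the complement of $E_r(x)$:
\[
\frac{(1-|y|^2)^s}{[x,y]^{n+s+t}}=\Bigl(\frac{(1-|x|^2)(1-|y|^2)}{[x,y]^2}\Bigr)^{\eta}\cdot\frac{(1-|y|^2)^{s-\eta}}{(1-|x|^2)^{\eta}\,[x,y]^{n+(s-\eta)+(t-\eta)}}.
\]
On $\mathbb B\backslash E_r(x)$ one has $\rho(x,y)\ge r$, so the first factor equals $(1-\rho(x,y)^2)^{\eta}\le(1-r^2)^{\eta}$; pulling it out and applying part~(i) with $s-\eta$ and $t-\eta$ in place of $s$ and $t$ (admissible since $s-\eta>-1$ and $t-\eta>0$) gives
\[
\int_{\mathbb B\backslash E_r(x)}\frac{\beta(x,y)(1-|y|^2)^s}{[x,y]^{n+s+t}}\,d\nu(y)\le C\,\frac{(1-r^2)^{\eta}}{(1-|x|^2)^t},
\]
with $C$ independent of $r$ and $x$. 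Given $\varepsilon>0$, choosing $r_\varepsilon<1$ with $C(1-r_\varepsilon^2)^{\eta}<\varepsilon$ finishes the argument, since $(1-r^2)^{\eta}<(1-r_\varepsilon^2)^{\eta}$ for $r_\varepsilon<r<1$.

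The one point requiring care is that $\beta(x,y)$ must not be discarded through the crude triangle-type estimate $\beta(x,y)\lesssim1+\log\frac{1}{1-|x|^2}+\log\frac{1}{1-|y|^2}$: the term $\log\frac{1}{1-|x|^2}$ would destroy the sharp power $(1-|x|^2)^{-t}$ on the right-hand side. Absorbing $\beta$ into an arbitrarily small power $(1-\rho^2)^{-\epsilon}$ and using that Lemma~\ref{LIntonSandB} still outputs the exact power $(1-|x|^2)^{-(t-\epsilon)}$ is what closes the bookkeeping — the $\epsilon$ lost there being precisely the $(1-|x|^2)^{-\epsilon}$ pulled out of $\beta$; one only needs $\epsilon$ (respectively $\eta$) below both $t$ and $s+1$ so that the shifted parameters stay in the range where Lemma~\ref{LIntonSandB} and part~(i) apply.
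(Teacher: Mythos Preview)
Your argument is correct, and it is genuinely different from the paper's. The paper proves both parts simultaneously by performing the M\"obius change of variable $y=\varphi_x(z)$: this maps $\mathbb B\backslash E_r(x)$ to $\mathbb B\backslash\mathbb B_r$, turns $\beta(x,y)$ into $\beta(0,z)$ by M\"obius invariance, and (via \eqref{MobiusIdnt}, \eqref{Jacob}, \eqref{avarphia}) produces the integral
\[
I_r(x)=(1-|x|^2)^t\int_{\mathbb B\backslash E_r(x)}\frac{\beta(x,y)(1-|y|^2)^s}{[x,y]^{n+s+t}}\,d\nu(y)
=\int_{\mathbb B\backslash\mathbb B_r}\frac{\beta(0,z)(1-|z|^2)^s}{[x,z]^{n+s-t}}\,d\nu(z),
\]
after which $\beta(0,z)\le 1+\log\frac{1}{1-|z|}$ and a polar-coordinates application of Lemma~\ref{LIntonSandB} show $I_r(x)$ is dominated by a single convergent one-variable integral over $[r,1]$, independent of $x$. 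Your route avoids the change of variable entirely: you control $\beta$ pointwise by a small negative power of $1-\rho^2$ and then factor $(1-\rho^2)^\eta$ out of the kernel algebraically, reducing everything to two direct invocations of Lemma~\ref{LIntonSandB}. The paper's approach makes the $x$-uniformity in (ii) transparent (it becomes the tail of a fixed convergent integral), while yours is more elementary and self-contained, needing neither the Jacobian identity \eqref{Jacob} nor \eqref{avarphia}. Your closing remark is also apt: the ``crude'' bound $\beta(x,y)\lesssim 1+\log\frac{1}{1-|x|}+\log\frac{1}{1-|y|}$ would fail here, whereas the paper sidesteps this precisely because after the substitution $\beta$ depends only on $z$, not on $x$.
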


\begin{proof}
The proof is similar to the proof of \cite[Lemma 4.3]{U2}, requiring
only a minor modification.
For $0\leq r<1$, let
\begin{equation*}
I_r(x)=(1-\lvert x\rvert^2)^t
\int_{\mathbb B\backslash E_r(x)}
\frac{\beta(x,y)\,(1-\lvert y\rvert^2)^s}{[x,y]^{n+s+t}}\,d\nu(y),
\end{equation*}
where for $r=0$, $\mathbb B\backslash E_0(x)=\mathbb B$.
In the integral make the change of variable $y=\varphi_x(z)$.
Note that
$\varphi_x^{-1}(\mathbb B\backslash E_r(x))=\mathbb B\backslash\mathbb B_r$.
Employing \eqref{MobiusIdnt}, \eqref{Jacob} and \eqref{avarphia},
and the fact that $\beta(x,\varphi_x(z))=\beta(0,z)$
by the M\"obius-invariance of $\beta$, we obtain that
\begin{equation*}
I_r(x)=
\int_{\mathbb B\backslash \mathbb B_r}
\frac{\beta(0,z)\,(1-\lvert z\rvert^2)^s}{[x,z]^{n+s-t}}\,d\nu(z).
\end{equation*}
Next, using $\beta(0,z)\leq 1+\log1/(1-\lvert z\rvert)$ and
integrating in polar coordinates shows
\begin{equation*}
I_r(x)\leq\int_r^1 n\tau^{n-1}\Big(1+\log\frac{1}{1-\tau}\Big)\,(1-\tau^2)^s
\int_{\mathbb S}\frac{d\sigma(\zeta)}{\lvert \tau x-\zeta\rvert^{n+s-t}}\,d\tau.
\end{equation*}
Estimating the inner integral with Lemma \ref{LIntonSandB} in three cases
and using the inequality $1-\tau^2\lvert x\rvert^2\geq 1-\tau^2$, we see that
\begin{equation*}
\int_{\mathbb S} \frac{d\sigma(\zeta)}{\lvert \tau x-\zeta\rvert^{n+s-t}}
\leq C g(\tau):=
\begin{cases}
\dfrac{1}{(1-\tau^2)^{1+s-t}},&\text{if $1+s-t>0$};\\
1+\log\dfrac{1}{1-\tau^2},&\text{if $1+s-t=0$};\\
1,&\text{if $1+s-t<0$},
\end{cases}
\end{equation*}
where $C$ depends only on $n,s,t$.
Thus
\begin{equation*}
I_r(x)
\leq C\int_r^1 n\tau^{n-1}\Big(1+\log\frac{1}{1-\tau}\Big)
\,(1-\tau^2)^s g(\tau)\,d\tau.
\end{equation*}
Because $s>-1$ and $t>0$, in all the three cases, the above
integral is finite when $r=0$.
This proves both parts of the lemma.
\end{proof}

We prove one more estimate.
In the lemma below we consider the integral over $r\mathbb B$
for $0<r\leq 1$, not just $\mathbb B$.
This generality will be needed later.

\begin{lemma}\label{LEstrB}
Let $s>-1$ and $t>0$.
There exists a constant $C=C(n,s,t)>0$ such that for all $0<r\leq 1$
and $f\in\mathcal B$,
\begin{equation*}
(1-\lvert x\rvert^2)^t\,\Bigl\lvert\int_{r\mathbb B}
\mathcal R_{s+t}(x,y) f(y)\,d\nu_{s}(y)\Bigr\rvert
\leq C\|f\|_{\mathcal B}.
\end{equation*}
\end{lemma}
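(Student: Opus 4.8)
The plan is to estimate the integral by splitting it according to the Bloch growth estimate and the kernel estimates, handling the bounded part and the unbounded (logarithmic) part separately. First I would write $\int_{r\mathbb B}\mathcal R_{s+t}(x,y)f(y)\,d\nu_s(y)$ and use the bound $|\mathcal R_{s+t}(x,y)|\le C/[x,y]^{n+s+t}$ from Lemma \ref{LEstR}(a). For the function $f$, the natural estimate is the pointwise bound \eqref{Blochpoint}, namely $|f(y)|\le\|f\|_{\mathcal B}\bigl(1+\log\frac{1}{1-|y|}\bigr)$. Since $1-|y|\ge\frac12(1-|y|^2)$, we have $1+\log\frac{1}{1-|y|}\le C\bigl(1+\log\frac{1}{1-|y|^2}\bigr)\lesssim 1+\beta(0,y)$, and more usefully one can relate this to the integrand appearing in Lemma \ref{Lest1}. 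So the bound becomes, enlarging $r\mathbb B$ to $\mathbb B$,
\begin{equation*}
(1-\lvert x\rvert^2)^t\Bigl\lvert\int_{r\mathbb B}\mathcal R_{s+t}(x,y)f(y)\,d\nu_s(y)\Bigr\rvert
\le C\|f\|_{\mathcal B}(1-\lvert x\rvert^2)^t\int_{\mathbb B}\frac{1+\log\frac{1}{1-\lvert y\rvert^2}}{[x,y]^{n+s+t}}(1-\lvert y\rvert^2)^s\,d\nu(y).
\end{equation*}
The constant term $1$ is controlled by Lemma \ref{LIntonSandB} (with the "$t>0$" case giving exactly $(1-|x|^2)^{-t}$), and the logarithmic term is controlled by Lemma \ref{Lest1}(i) after noting $1+\log\frac{1}{1-|y|^2}\lesssim 1+\beta(0,y)$, which is dominated by a constant plus $\beta(x,y)$ only after we also absorb the discrepancy — so a cleaner route is to use $\beta(0,y)\le\beta(0,x)+\beta(x,y)$, and observe $\beta(0,x)=\log\frac{1+|x|}{1-|x|}\lesssim 1+\log\frac{1}{1-|x|^2}$. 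This extra factor $1+\log\frac{1}{1-|x|^2}$ multiplied by the $\int 1/[x,y]^{n+s+t}(\cdots)$ integral (which is $\sim(1-|x|^2)^{-t}$) would produce an unwanted log in $x$; hence the $\beta(0,x)$ contribution must instead be paired with the \emph{gradient} formulation.

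To avoid the spurious logarithm, I would not use \eqref{Blochpoint} directly but rather exploit that the reproducing kernel integral of a Bloch function can be re-expressed using the Bloch seminorm. The key idea is that $\int_{\mathbb B}\mathcal R_{s+t}(x,y)f(y)\,d\nu_s(y)=D^t_s f(x)$ when $f\in\mathcal B\subset L^1_s$ (by Lemma \ref{LDstInt}(ii), valid since \eqref{Blochpoint} gives $f\in L^1_s$ for any $s>-1$), provided $r=1$. For general $r<1$, write the integral over $r\mathbb B$ as the integral over $\mathbb B$ minus the integral over $\mathbb B\setminus r\mathbb B$; but actually the statement only claims an upper bound, so it suffices to bound $\int_{r\mathbb B}$ by $\int_{\mathbb B}$ of the absolute value — the point of allowing $r\le1$ is merely that the estimate is uniform, and the absolute-value bound over $\mathbb B$ dominates the one over $r\mathbb B$. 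So the real task reduces to: $(1-|x|^2)^t\int_{\mathbb B}\frac{|f(y)|(1-|y|^2)^s}{[x,y]^{n+s+t}}\,d\nu(y)\le C\|f\|_{\mathcal B}$.

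For this, the honest approach is to separate $f(y)=f(0)+\bigl(f(y)-f(0)\bigr)$. The first piece contributes $|f(0)|(1-|x|^2)^t\int_{\mathbb B}\frac{(1-|y|^2)^s}{[x,y]^{n+s+t}}\,d\nu(y)\le C|f(0)|\le C\|f\|_{\mathcal B}$ by Lemma \ref{LIntonSandB}. For the second piece, \eqref{BlochDif} gives $|f(y)-f(0)|\le\frac12 p_{\mathcal B}(f)\beta(0,y)$, and now $\beta(0,y)$ is exactly the type of factor handled by Lemma \ref{Lest1}(i) with $x$ replaced by $0$ — but the kernel is $[x,y]$, not $[0,y]$. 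To reconcile, I would instead use \eqref{BlochDif} in the form $|f(y)-f(x)|\le\frac12 p_{\mathcal B}(f)\beta(x,y)$, writing $f(y)=f(x)+(f(y)-f(x))$: the $f(x)$ piece gives $|f(x)|(1-|x|^2)^t\int\frac{(1-|y|^2)^s}{[x,y]^{n+s+t}}\,d\nu(y)\le C|f(x)|$, but $|f(x)|$ is not bounded by $\|f\|_{\mathcal B}$ — it carries a log. The resolution: combine both tricks. Use $f(y)-f(0)$ but bound $\beta(0,y)\le\beta(0,x)+\beta(x,y)$; the $\beta(x,y)$ part is handled by Lemma \ref{Lest1}(i) directly, giving $\le C p_{\mathcal B}(f)$; the $\beta(0,x)$ part is a constant in $y$, giving $p_{\mathcal B}(f)\beta(0,x)(1-|x|^2)^t\int\frac{(1-|y|^2)^s}{[x,y]^{n+s+t}}\,d\nu(y)\le Cp_{\mathcal B}(f)\beta(0,x)$; and finally the $f(0)$ part gives $\le C|f(0)|$. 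Summing, the bound is $C\bigl(|f(0)|+p_{\mathcal B}(f)(1+\beta(0,x))\bigr)$, which still has the log in $x$. The main obstacle is exactly this: naively the estimate seems to pick up a $\log\frac{1}{1-|x|^2}$ factor, and the only way to kill it is a genuine cancellation. I expect the correct proof uses the identity $D^t_s f(x)=\int_{\mathbb B}\mathcal R_{s+t}(x,y)f(y)\,d\nu_s(y)$ together with the fact (provable via the series expansion \eqref{DefineDst} and the Bloch characterization to be established, or by a direct argument) that $D^t_s$ applied to a constant is that same constant times $c_0(s+t)/c_0(s)$, so that $\int_{\mathbb B}\mathcal R_{s+t}(x,y)\,d\nu_s(y)$ is a \emph{constant} (independent of $x$); subtracting $f(x)$ times this constant, the kernel integral of $f(y)-f(x)$ is then $D^t_s f(x)-(\text{const})f(x)$, which one estimates by $\int\frac{\beta(x,y)(1-|y|^2)^s}{[x,y]^{n+s+t}}\,d\nu(y)\le C(1-|x|^2)^{-t}p_{\mathcal B}(f)$ via Lemma \ref{Lest1}(i), with no residual log because the troublesome $f(x)$ term has been explicitly cancelled by the constant-function integral. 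The bound over $r\mathbb B$ then follows since $\bigl|\int_{r\mathbb B}\mathcal R_{s+t}(x,y)f(y)\,d\nu_s(y)\bigr|$ differs from $|D^t_s f(x)|$ by $\bigl|\int_{\mathbb B\setminus r\mathbb B}\cdots\bigr|$, which is itself $\le C\|f\|_{\mathcal B}(1-|x|^2)^{-t}$ by the same absolute-value estimate restricted to that region.
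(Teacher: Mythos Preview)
Your eventual core idea is correct and matches the paper: split $f(y)=f(x)+\bigl(f(y)-f(x)\bigr)$, handle the difference term via \eqref{BlochDif} and Lemma~\ref{Lest1}(i), and use that the kernel integrated against a constant gives something bounded in $x$ (so the $f(x)$ piece contributes only $C\lvert f(x)\rvert$, which after multiplication by $(1-\lvert x\rvert^2)^t$ is bounded thanks to \eqref{Blochpoint}).

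The gap is in your final sentence, where you reduce $r<1$ to $r=1$ by writing $\int_{r\mathbb B}=\int_{\mathbb B}-\int_{\mathbb B\setminus r\mathbb B}$ and claiming the tail is $\le C\|f\|_{\mathcal B}(1-\lvert x\rvert^2)^{-t}$ ``by the same absolute-value estimate restricted to that region.'' But you yourself spent most of the proposal explaining that the absolute-value estimate $\int_{\mathbb B}\lvert\mathcal R_{s+t}(x,y)\rvert\lvert f(y)\rvert\,d\nu_s(y)$ picks up an extra factor $\log\tfrac{1}{1-\lvert x\rvert}$; restricting to the annulus $\mathbb B\setminus r\mathbb B$ does not help, since for $\lvert x\rvert$ close to $1$ the mass of the integrand is concentrated near $y=x$, which lies in that annulus. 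So this step is not justified as written.

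The clean fix---and this is what the paper does---is to perform the decomposition $f(y)=f(x)+(f(y)-f(x))$ directly on $r\mathbb B$, not on $\mathbb B$. The point you noticed for $r=1$ (that $\int_{\mathbb B}\mathcal R_{s+t}(x,y)\,d\nu_s(y)$ is independent of $x$) in fact holds for every $r$: integrating in polar coordinates and using the $\mathcal H$-harmonic mean-value property gives
\[
\int_{r\mathbb B}\mathcal R_{s+t}(x,y)\,d\nu_s(y)
=\int_0^r n\tau^{n-1}(1-\tau^2)^s\,\mathcal R_{s+t}(x,0)\,d\tau
=c_0(s+t)\int_0^r n\tau^{n-1}(1-\tau^2)^s\,d\tau,
\]
which is bounded uniformly in $r\in(0,1]$ and $x\in\mathbb B$. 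With this observation the $f(x)$ piece is controlled exactly as in the $r=1$ case, and the $f(y)-f(x)$ piece is bounded by enlarging $r\mathbb B$ to $\mathbb B$ inside the absolute value and invoking Lemma~\ref{Lest1}(i). No detour through $r=1$ is needed.
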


\begin{proof}
We write
\begin{align*}
\int_{r\mathbb B} \mathcal R_{s+t}(x,y)f(y)d\nu_s(y)
&=\int_{r\mathbb B} \mathcal R_{s+t}(x,y)f(x)d\nu_s(y)\\
&+\int_{r\mathbb B} \mathcal R_{s+t}(x,y)\big(f(y)-f(x)\big)d\nu_s(y)
=:h_{1,r}(x)+h_{2,r}(x).
\end{align*}
Integrating in polar coordinates gives
\begin{equation*}
h_{1,r}(x)=f(x)\int_0^r n\tau^{n-1}(1-\tau^2)^s
\int_{\mathbb S}\mathcal R_{s+t}(x,\tau\zeta)\,d\sigma(\zeta)\,d\tau.
\end{equation*}
Now, by the mean-value theorem for $\mathcal H$-harmonic functions,
the inner integral is $\mathcal R_{s+t}(x,0)$ and by
\eqref{KernelExp}, $\mathcal R_{s+t}(x,0)=c_0(s+t)$ for all
$x\in\mathbb B$ because $Z_m(x,0)=0$ for $m\geq 1$, $Z_0\equiv 1$
and $S_0\equiv 1$.
Therefore, using also \eqref{Blochpoint}, we obtain
\begin{align}\label{Esth1}
\begin{split}
\bigl\lvert h_{1,r}(x)\bigr\rvert
&\leq c_0(s+t)\lvert f(x)\rvert
\int_0^1n\tau^{n-1}(1-\tau^2)^s\,d\tau
=C\lvert f(x)\rvert\\
&\leq C\|f\|_{\mathcal B}
\Bigl(1+\log\frac{1}{1-\lvert x\rvert}\Bigr),
\end{split}
\end{align}
with $C$ depending only on $n,s,t$.
This shows
$(1-\lvert x\rvert^2)^t\lvert h_{1,r}(x)\rvert
\lesssim \|f|_{\mathcal B}$.

Next, by \eqref{BlochDif} and Lemma \ref{LEstR}(a),
\begin{equation*}
\lvert h_{2,r}(x)\rvert
\lesssim p_{\mathcal B}(f)\int_{r\mathbb B}
\frac{\beta(x,y)\,d\nu_s(y)}{[x,y]^{n+s+t}}
\lesssim \|f\|_{\mathcal B}\int_{\mathbb B}
\frac{\beta(x,y)\,d\nu_s(y)}{[x,y]^{n+s+t}}
\lesssim \frac{\|f\|_{\mathcal B}}{(1-\lvert x\rvert^2)^t},
\end{equation*}
where in the last inequality we use Lemma \ref{Lest1}(i).
This proves the lemma.
\end{proof}

\begin{proposition}\label{PDstchar}
Let $s>-1$, $t>0$ and $f\in\mathcal H(\mathbb B)$.
\begin{enumerate}
\item[(i)] $f\in\mathcal B$ if and only if
$(1-\lvert x\rvert^2)^tD^t_sf(x)\in L^\infty(\mathbb B)$.
In this case,
\begin{equation*}
  \|f\|_{\mathcal B}\sim\|(1-\lvert x\rvert^2)^tD^t_sf(x)\|_{L^\infty},
\end{equation*}
where the implied constants depend only on $n,s,t$, and are
independent of $f$.
In addition,
\begin{equation}\label{PsIst}
P_s\bigl[(1-\lvert x\rvert^2)^tD^t_sf(x)\bigr]=f.
\end{equation}
\item[(ii)] $f\in\mathcal B_0$ if and only if
$(1-\lvert x\rvert^2)^tD^t_sf(x)\in C_0(\mathbb B)$.
\end{enumerate}
\end{proposition}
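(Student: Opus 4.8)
The plan is to prove (i) first, obtaining the norm equivalence together with the reproducing identity \eqref{PsIst}, and then to deduce (ii) by a density/approximation argument. For the forward direction of (i), assume $f\in\mathcal B$. Since $f\in L^1_s(\mathbb B)$ by \eqref{Blochpoint} (the logarithmic growth is integrable against any $d\nu_s$ with $s>-1$), Lemma \ref{Lproject} gives $P_sf=f$, and then Lemma \ref{LDstInt}(ii) yields $D^t_sf(x)=\int_{\mathbb B}\mathcal R_{s+t}(x,y)f(y)\,d\nu_s(y)$. Applying Lemma \ref{LEstrB} with $r=1$ gives immediately $(1-\lvert x\rvert^2)^t\lvert D^t_sf(x)\rvert\le C\|f\|_{\mathcal B}$, so $(1-\lvert x\rvert^2)^tD^t_sf(x)\in L^\infty(\mathbb B)$ with norm controlled by $\|f\|_{\mathcal B}$.

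For the reverse direction, set $\phi(x)=(1-\lvert x\rvert^2)^tD^t_sf(x)$ and assume $\phi\in L^\infty(\mathbb B)$. The key computation is $P_s\phi$: by definition $P_s\phi(x)=\int_{\mathbb B}\mathcal R_s(x,y)(1-\lvert y\rvert^2)^tD^t_sf(y)\,d\nu_s(y)$, and since $d\nu_{s}(y)(1-\lvert y\rvert^2)^t=d\nu_{s+t}(y)$, this equals $\int_{\mathbb B}\mathcal R_s(x,y)D^t_sf(y)\,d\nu_{s+t}(y)=P_{s+t}(D^t_sf)(x)$. Now $D^t_sf\in L^1_{s+t}(\mathbb B)\cap\mathcal H(\mathbb B)$ because $\phi$ is bounded, so by Lemma \ref{Lproject} (or the reproducing property of Lemma \ref{LDstInt}(ii) combined with \eqref{DstInverse}) we get $P_{s+t}(D^t_sf)=D^{-t}_{s+t}(D^t_sf)=f$; one checks this cleanly on the series expansions using \eqref{DefineDst} and \eqref{DstRs}, which also establishes \eqref{PsIst}. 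Having $f=P_s\phi$ with $\phi\in L^\infty$, Lemma \ref{LPintoB} gives $f\in\mathcal B$ with $\|f\|_{\mathcal B}\le C\|\phi\|_{L^\infty}$. Combining the two directions yields the norm equivalence.

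For (ii), suppose first $\phi(x)=(1-\lvert x\rvert^2)^tD^t_sf(x)\in C_0(\mathbb B)$. Then $f=P_s\phi$ by \eqref{PsIst}, and since $\phi\in C_0(\mathbb B)\subset C(\overline{\mathbb B})$, Lemma \ref{LPintoB} gives $f\in\mathcal B_0$. Conversely, suppose $f\in\mathcal B_0$. The natural approach is to approximate $f$ in the $\mathcal B$-norm by functions whose transforms lie in $C_0(\mathbb B)$; a convenient choice is the partial sums or, better, the dilates/truncations arising from Remark \ref{Rpolyproj} and Lemma \ref{LBergmanpoly}. Concretely, I expect that finite linear combinations $\sum_{m}S_m(\lvert x\rvert)q_m(x)$ are dense in $\mathcal B_0$ (this should follow from a standard Cesàro-type argument on the series expansion together with the M\"obius-invariance of $p_{\mathcal B}$ and the estimate \eqref{Smbound}), and for each such finite sum $g$, $D^t_sg$ is again a finite sum of the same type, so $(1-\lvert x\rvert^2)^tD^t_sg(x)\in C_0(\mathbb B)$ by Lemma \ref{LSmpminB0} applied to each term. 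Since $\phi\mapsto$ the corresponding transform is continuous from $\mathcal B$ to $L^\infty$ by (i), and $C_0(\mathbb B)$ is closed in $L^\infty$, passing to the limit gives $(1-\lvert x\rvert^2)^tD^t_sf(x)\in C_0(\mathbb B)$.

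The main obstacle I anticipate is the density of the span of $\{S_m(\lvert x\rvert)q_m(x)\}$ in $\mathcal B_0$ used in the converse of (ii): unlike the holomorphic or Euclidean-harmonic settings, dilations $f_r(x)=f(rx)$ are \emph{not} $\mathcal H$-harmonic, so one cannot simply argue $f_r\to f$ in $\mathcal B_0$ by the usual route. A workable substitute is to use the heat-type smoothing given by the operators themselves — e.g. show $D^{-\varepsilon}_{s+\varepsilon}D^{\varepsilon}_s f\to f$ in $\mathcal B_0$ as $\varepsilon\to 0^+$ is not obviously helpful either — so instead I would truncate the series after finitely many terms and control the tail using that $f\in\mathcal B_0$ forces the appropriate decay of the transform, i.e. work on the $L^\infty$/$C_0$ side via (i) rather than directly on $\mathcal B_0$. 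This reduces the problem to showing that compactly-supported-away-from-the-boundary smooth functions (or polynomials restricted suitably) are dense in $C_0(\mathbb B)$, which is elementary, and then transporting back through $P_s$.
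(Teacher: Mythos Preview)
Your proof of part (i) and of the reverse implication in (ii) matches the paper's argument exactly.

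The gap is in the forward implication of (ii), where your approach is circular. You want to approximate $f\in\mathcal B_0$ in the $\mathcal B$-norm by finite sums $\sum S_m(\lvert x\rvert)q_m(x)$, but the density of this span in $\mathcal B_0$ is precisely Corollary~\ref{Cdense} of the paper, and it is proved there \emph{using} Proposition~\ref{PDstchar}(ii), not the other way around. Your fallback suggestion---work on the $L^\infty/C_0$ side, approximate by compactly supported functions, and transport back through $P_s$---has the same defect: you would need to approximate $\phi=(1-\lvert x\rvert^2)^tD^t_sf$ in $L^\infty$ by functions in $C_0(\mathbb B)$, but that is only possible if you already know $\phi\in C_0(\mathbb B)$, which is exactly the claim. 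You correctly diagnose that dilations and standard Ces\`aro arguments fail in the $\mathcal H$-harmonic setting, but you do not supply a replacement.

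The paper avoids density altogether and argues directly. For $f\in\mathcal B_0$ it writes
\[
D^t_sf(x)=\int_{\mathbb B}\mathcal R_{s+t}(x,y)f(y)\,d\nu_s(y)
\]
and splits this as $h_1+h_2+h_3$: the term $h_1$ replaces $f(y)$ by $f(x)$ (giving a constant times $f(x)$ by the mean-value property, so $(1-\lvert x\rvert^2)^th_1(x)\in C_0$ via \eqref{Esth1}); the term $h_2$ integrates $f(y)-f(x)$ over $\mathbb B\setminus E_r(x)$ and is controlled by Lemma~\ref{Lest1}(ii), which makes $(1-\lvert x\rvert^2)^t\lvert h_2(x)\rvert\lesssim\varepsilon$ once $r$ is chosen large enough; and $h_3$ integrates over $E_r(x)$, where $\lvert f(y)-f(x)\rvert\lesssim\sup_{z\in E_r(x)}(1-\lvert z\rvert^2)\lvert\nabla f(z)\rvert$, which tends to $0$ as $\lvert x\rvert\to 1^-$ since $f\in\mathcal B_0$. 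The key technical input you are missing is Lemma~\ref{Lest1}(ii), the uniform smallness of the off-diagonal piece of the kernel integral; this is what replaces the unavailable dilation argument.
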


This proposition verifies the onto part of Theorem \ref{Tproj}
and completes its proof.
It also shows that for every $s>-1$ and $t>0$,
$\|(1-\lvert x\rvert^2)^tD^t_sf(x)\|_{L^\infty}$ is a norm on
$\mathcal B$ equivalent to $\|f\|_{\mathcal B}$.
In the rest of the paper we mostly employ these norms as they are
easier to work with than $\|\cdot\|_{\mathcal B}$.

\begin{proof}
Suppose $f\in\mathcal B$.
Then $f\in L^1_s\cap\mathcal H(\mathbb B)$ by \eqref{Blochpoint}
and by Lemma \ref{LDstInt}(ii) we have
$D^t_s f(x)=\int_{\mathbb B}\mathcal R_{s+t}(x,y)f(y)d\nu_s(y)$.
That
$\|(1-\lvert x\rvert^2)^t D^t_sf(x)\|_{L^\infty}\lesssim \|f\|_{\mathcal B}$
follows now from Lemma \ref{LEstrB}.

Suppose now that $(1-\lvert x\rvert^2)^tD^t_sf(x)\in L^\infty(\mathbb B)$.
We first show \eqref{PsIst} which holds because
$D^t_sf\in L^1_{s+t}(\mathbb B)\cap\mathcal H(\mathbb B)$
and by Lemma \ref{LDstInt}(ii) and \eqref{DstInverse},
\begin{equation*}
P_s\bigl[(1-\lvert x\rvert^2)^tD^t_sf(x)\bigr](x)
=\int_{\mathbb B} \mathcal R_s(x,y)D^t_sf(y)\,d\nu_{s+t}(y)
=D^{-t}_{s+t}(D^t_sf)(x)=f(x).
\end{equation*}
It now follows from Lemma \ref{LPintoB} that $f\in\mathcal B$
and $\|f\|_{\mathcal B}
\leq \|P_s\|\,\|(1-\lvert x\rvert^2)^tD^t_sf(x)\|_{L^\infty}$.

(ii) Suppose $f\in\mathcal B_0$.
For $\varepsilon>0$, pick $r>r_\varepsilon$ with $r_\varepsilon$
is as given in Lemma  \ref{Lest1} (ii).
As in the proof of Lemma \ref{LEstrB}, we write
$D^t_s f(x)=\int_{\mathbb B}\mathcal R_{s+t}(x,y)f(y)d\nu_s(y)$
in the form
\begin{align*}
D^t_sf(x)
={}&\int_{\mathbb B} \mathcal R_{s+t}(x,y)f(x)\,d\nu_s(y)
+\int_{\mathbb B\backslash E_r(x)}
\mathcal R_{s+t}(x,y)\big(f(y)-f(x)\big)\,d\nu_s(y)\\
&+\int_{E_r(x)} \mathcal R_{s+t}(x,y)\big(f(y)-f(x)\big)\,d\nu_s(y)
=:h_1(x)+h_2(x)+h_3(x).
\end{align*}
We have $(1-\lvert x\rvert^2)^th_1(x)\in C_0(\mathbb B)$ by \eqref{Esth1}.
Next, applying \eqref{BlochDif}, Lemma \ref{LEstR}(a) and then
Lemma \ref{Lest1}(ii) shows that for some constant $C=C(n,s,t)$,
\begin{equation*}
\lvert h_2(x)\rvert
\leq C p_{\mathcal B}(f)\int_{\mathbb B\backslash E_r(x)}
\frac{\beta(x,y)\,(1-\lvert y\rvert^2)^s}{[x,y]^{n+s+t}}\,d\nu(y)
<C p_{\mathcal B}(f)\frac{\varepsilon}{(1-\lvert x\rvert^2)^t}.
\end{equation*}
Thus $(1-\lvert x\rvert^2)^t\lvert h_2(x)\rvert\lesssim \varepsilon$.

To estimate $h_3$ note that for $y\in E_r(x)$, by the mean-value inequality,
\begin{equation*}
\lvert f(y)-f(x)\rvert\leq \lvert y-x\rvert\sup_{z\in E_r(x)}\lvert\nabla f(z)\rvert
\end{equation*}
and by \eqref{phmetric},
$\lvert y-x\rvert=\rho(x,y)[x,y]<r[x,y]\lesssim r(1-\lvert x\rvert^2)$ since
by part (ii) of Lemma \ref{Lratiobracket}, $[x,y]\sim[x,x]=1-\lvert x\rvert^2$.
Therefore
\begin{equation*}
\lvert f(y)-f(x)\rvert
\lesssim (1-\lvert x\rvert^2)\sup_{z\in E_r(x)}\lvert\nabla f(z)\rvert
\lesssim \sup_{z\in E_r(x)}(1-\lvert z\rvert^2)\lvert\nabla f(z)\rvert,
\end{equation*}
where the last inequality follows from Lemma \ref{Lratiobracket}(i).
Hence, by Lemma \ref{LIntonSandB},
\begin{align*}
(1-\lvert x\rvert^2)^t\lvert h_3(x)\rvert
&\lesssim \sup_{z\in E_r(x)}(1-\lvert z\rvert^2)\lvert\nabla f(z)\rvert\,
(1-\lvert x\rvert^2)^t\int_{\mathbb B}\frac{d\nu_s(y)}{[x,y]^{n+s+t}}\\
&\lesssim \sup_{z\in E_r(x)}(1-\lvert z\rvert^2)\lvert\nabla f(z)\rvert.
\end{align*}
Now, by \eqref{Ercenter}, for $z\in E_r(x)$ we have
$\lvert z\rvert
\geq \dfrac{\bigl\lvert\lvert x\rvert-r\bigr\rvert}{1-r\lvert x\rvert}$
and the right-hand side tends to $1$ as $\lvert x\rvert\to 1^-$.
Since $f\in\mathcal B_0$, this shows that
$\lim_{\lvert x\rvert\to 1^-}(1-\lvert x\rvert^2)^t\lvert h_3(x)\rvert=0$;
and we conclude that $(1-\lvert x\rvert^2)^tD^t_sf(x)\in C_0(\mathbb B)$.

Conversely, if $(1-\lvert x\rvert^2)^tD^t_sf(x)\in C_0(\mathbb B)$, then
$f\in\mathcal B_0$ by Lemma \ref{LPintoB} and \eqref{PsIst}.
\end{proof}

The next corollary is the $\mathcal H$-harmonic counterpart of the fact that
harmonic polynomials are dense in the harmonic little Bloch space.

\begin{corollary}\label{Cdense}
$\text{span}\bigl\{S_m(\lvert x\rvert)q_m(x)
\,\vert\, q_m\in H_m(\mathbb R^n), m=0,1,2\dotsc \bigr\}$
is dense in $\mathcal B_0$ and therefore $\mathcal B_0$ is separable.
\end{corollary}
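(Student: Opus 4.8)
The plan is to deduce Corollary \ref{Cdense} from the characterization of $\mathcal B_0$ in Proposition \ref{PDstchar}(ii), together with the density of polynomials in $C_0(\mathbb B)$ and the mapping properties of $P_s$ and $D^t_s$. Fix $s>-1$ and $t>0$. Let $f\in\mathcal B_0$. By Proposition \ref{PDstchar}(ii), the function $g(x):=(1-\lvert x\rvert^2)^tD^t_sf(x)$ belongs to $C_0(\mathbb B)$, and by \eqref{PsIst} we have $P_sg=f$. Since $C_0(\mathbb B)\subset L^\infty(\mathbb B)$ and $P_s\colon L^\infty\to\mathcal B$ is bounded (Lemma \ref{LPintoB}), it suffices to approximate $g$ in the $L^\infty$-norm by functions whose $P_s$-image lies in the claimed span.

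The key observation is that $(1-\lvert x\rvert^2)^t$ is itself continuous on $\overline{\mathbb B}$ and vanishes on $\mathbb S$, so $g\in C_0(\mathbb B)$ can be uniformly approximated by functions of the form $(1-\lvert x\rvert^2)^tp(x)$ with $p$ a polynomial: indeed, by the Stone--Weierstrass theorem, polynomials are dense in $C(\overline{\mathbb B})$, so choosing a polynomial $p$ with $\|g/(1-\lvert x\rvert^2)^t\cdots\|$ — more carefully, we first note $g$ can be uniformly approximated on $\overline{\mathbb B}$ by polynomials $p_k$; then $p_k$ need not vanish on $\mathbb S$, but multiplying by a fixed cutoff is awkward. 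A cleaner route: since $g\in C_0(\mathbb B)$, for each $\varepsilon>0$ there is $\delta<1$ with $\lvert g(x)\rvert<\varepsilon$ for $\lvert x\rvert>\delta$; apply Stone--Weierstrass on $\overline{\mathbb B}$ to get a polynomial $q$ with $\|g-q\|_{L^\infty(\overline{\mathbb B})}<\varepsilon$; then $q$ is a polynomial with $\|g-q\|_{L^\infty}<\varepsilon$, and although $q$ itself is not in $C_0(\mathbb B)$ the difference $g-q$ is still small in sup-norm, which is all we need since $P_s$ is bounded on $L^\infty$. Thus $\|f-P_sq\|_{\mathcal B}=\|P_s(g-q)\|_{\mathcal B}\le\|P_s\|\,\varepsilon$.

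It remains to identify $P_sq$. By Remark \ref{Rpolyproj}, for every polynomial $q$ we have $P_sq\in\text{span}\bigl\{S_m(\lvert x\rvert)q_m(x)\,\vert\, q_m\in H_m(\mathbb R^n), m=0,1,2,\dots\bigr\}$. Hence $P_sq$ lies in the asserted span, and we have shown that $f$ is an $\mathcal B$-norm limit of elements of that span; this proves density in $\mathcal B_0$. Separability then follows because the span has a countable dense subset: the spaces $H_m(\mathbb R^n)$ are finite-dimensional, so finite $\mathbb Q(i)$-linear combinations of a basis of each $H_m(\mathbb R^n)$, multiplied by $S_m(\lvert x\rvert)$, form a countable dense subset of the span, hence of $\mathcal B_0$.

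I do not anticipate a serious obstacle; the argument is a soft functional-analytic wrap-up. The only point requiring a little care is the approximation step in $C_0(\mathbb B)$: one must not be distracted into trying to approximate $g$ by elements of $C_0(\mathbb B)$ that are polynomials (impossible, since a nonzero polynomial cannot vanish on all of $\mathbb S$), but instead simply approximate $g$ in $L^\infty$-norm by arbitrary polynomials and exploit the boundedness of $P_s$ on $L^\infty$ to transfer the approximation to $\mathcal B$. Everything else — the identity $P_sg=f$, the form of $P_sq$, and the finite-dimensionality of $H_m(\mathbb R^n)$ — is already available from the preceding results.
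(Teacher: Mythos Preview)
Your proposal is correct and follows essentially the same route as the paper: use Proposition \ref{PDstchar}(ii) to write $f=P_s g$ with $g\in C_0(\mathbb B)$, approximate $g$ uniformly by polynomials via Stone--Weierstrass, and invoke Remark \ref{Rpolyproj} together with the boundedness of $P_s$ on $L^\infty$. The meandering false start about approximating by $(1-\lvert x\rvert^2)^t p(x)$ should be excised, but the final argument (and your explicit note that the approximating polynomials need not lie in $C_0(\mathbb B)$, only in $L^\infty$) is exactly what is needed.
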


\begin{proof}
Pick any $\alpha>-1$ and $t>0$.
If $f\in\mathcal B_0$, then
$\phi(x)=(1-\lvert x\rvert^2)^tD^t_\alpha f(x)$ is in $C_0(\mathbb B)$
and $P_\alpha\phi=f$.
Since $\phi$ can be approximated in $C_0(\mathbb B)$ by polynomials,
the result follows from Remark \ref{Rpolyproj}.
\end{proof}

\section{Duality}

We begin with writing the pairing in \eqref{pairing} as an absolutely
convergent integral.

\begin{lemma}\label{Lpairingabs}
Let $\alpha>-1$, $f\in\mathcal B^1_\alpha$ and $g\in\mathcal B$.
For every $t>0$,
\begin{equation*}
\langle f,g\rangle_\alpha
=\lim_{r\to 1^-}\int_{r\mathbb B}f(x)g(x)\,d\nu_\alpha(x)
=\int_{\mathbb B}f(x)(1-\lvert x\rvert^2)^tD^t_\alpha g(x)\,d\nu_\alpha(x).
\end{equation*}
\end{lemma}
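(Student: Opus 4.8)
The goal is to establish two equalities: first that the limit defining $\langle f,g\rangle_\alpha$ equals the absolutely convergent integral $\int_{\mathbb B}f(x)(1-|x|^2)^tD^t_\alpha g(x)\,d\nu_\alpha(x)$, and implicitly that this integral does converge absolutely. The natural strategy is to work on $r\mathbb B$ first, where everything is finite, and move a differential operator off of $g$ and onto the reproducing-kernel side via the self-adjointness built into the $D^t_s$ formalism. Concretely, since $g\in\mathcal B$, Proposition \ref{PDstchar}(i) gives $P_\alpha[(1-|x|^2)^tD^t_\alpha g]=g$, i.e.
\begin{equation*}
g(x)=\int_{\mathbb B}\mathcal R_\alpha(x,y)(1-|y|^2)^tD^t_\alpha g(y)\,d\nu_\alpha(y),
\end{equation*}
with the integrand bounded by a constant times $[x,y]^{-(\alpha+n)}$ (Lemma \ref{LEstR}(a)) times a bounded function, hence absolutely convergent. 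First I would substitute this representation into $\int_{r\mathbb B}f(x)g(x)\,d\nu_\alpha(x)$ and apply Fubini; the double integral over $r\mathbb B\times\mathbb B$ is absolutely convergent because $f\in L^1_\alpha$, $(1-|y|^2)^tD^t_\alpha g(y)$ is bounded, and $\int_{r\mathbb B}|\mathcal R_\alpha(x,y)|\,d\nu_\alpha(x)$ is finite (indeed $\int_{\mathbb B}[x,y]^{-(\alpha+n)}\,d\nu_\alpha(x)\lesssim 1+\log\frac1{1-|y|^2}$ by Lemma \ref{LIntonSandB} with $t=0$, so uniformly bounded on compact $y$-sets and at worst logarithmic near the boundary — combined with the extra decay from $(1-|y|^2)^t$ this is integrable).

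**Executing the interchange.** After Fubini we get
\begin{equation*}
\int_{r\mathbb B}f(x)g(x)\,d\nu_\alpha(x)
=\int_{\mathbb B}(1-|y|^2)^tD^t_\alpha g(y)
\Bigl(\int_{r\mathbb B}f(x)\mathcal R_\alpha(x,y)\,d\nu_\alpha(x)\Bigr)d\nu_\alpha(y).
\end{equation*}
The inner integral is $\int_{r\mathbb B}f(x)\mathcal R_\alpha(y,x)\,d\nu_\alpha(x)$ by symmetry of $\mathcal R_\alpha$; I would recognize this as a truncated Bergman projection of $f$ evaluated at $y$. Since $f\in\mathcal B^1_\alpha\subset L^1_\alpha\cap\mathcal H(\mathbb B)$, the reproducing property (cited after \eqref{Reproduce}, and used in the proof of Lemma \ref{LDstInt}) gives $\int_{\mathbb B}f(x)\mathcal R_\alpha(y,x)\,d\nu_\alpha(x)=f(y)$; so the inner integral converges to $f(y)$ as $r\to1^-$. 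To pass the limit through the outer $y$-integral I would invoke dominated convergence: the inner truncated integrals are dominated uniformly in $r$ by $\int_{\mathbb B}|f(x)|\,|\mathcal R_\alpha(y,x)|\,d\nu_\alpha(x)\lesssim \|f\|_{L^1_\alpha}\cdot(\text{something locally bounded in }y)$ — but here one must be slightly careful since the bound $\int_{\mathbb B}|\mathcal R_\alpha(y,x)|\,d\nu_\alpha(x)$ may blow up logarithmically as $|y|\to1$, so the honest dominating function is $(1-|y|^2)^t|D^t_\alpha g(y)|\cdot\bigl(1+\log\frac1{1-|y|^2}\bigr)\|f\|_{L^1_\alpha}$ times a constant, which is in $L^1_\alpha(d\nu_\alpha)$ because the $(1-|y|^2)^t$ factor absorbs the log. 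This justifies passing to the limit, yielding $\langle f,g\rangle_\alpha=\int_{\mathbb B}f(y)(1-|y|^2)^tD^t_\alpha g(y)\,d\nu_\alpha(y)$, which also shows this integral is absolutely convergent (the same dominating function bounds $|f(y)(1-|y|^2)^tD^t_\alpha g(y)|$ pointwise, crudely, since $|f(y)|\lesssim\|f\|$ is false — rather one uses $|f|\in L^1_\alpha$ directly and $(1-|y|^2)^t|D^t_\alpha g(y)|$ bounded).

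**The main obstacle** I anticipate is handling the logarithmic divergence of $\int_{\mathbb B}|\mathcal R_\alpha(y,x)|\,d\nu_\alpha(x)$ near $|y|=1$ cleanly enough to justify both the Fubini step and the dominated-convergence step simultaneously; the fix is exactly the observation that $(1-|y|^2)^t$ (with $t>0$) beats any power of $\log\frac1{1-|y|^2}$, so all the relevant integrals in the $y$-variable remain finite. A secondary point worth stating explicitly is that the existence of the limit in \eqref{pairing} — asserted in the introduction but not yet proved — drops out of this argument for free, since we have exhibited the limit as a convergent integral. I would also remark that the identity is independent of the particular $t>0$ chosen, which is consistent with the fact (Proposition \ref{PDstchar}) that all these norms are equivalent and all these representations reproduce the same $g$; formally one can see it by noting $D^t_\alpha$ and $D^{t'}_\alpha$ both invert back to $g$ under $P_\alpha$, so the pairing value cannot depend on $t$.
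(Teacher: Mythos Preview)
Your approach is a natural dual to the paper's: you expand $g$ via the identity $g=P_\alpha\bigl[(1-\lvert\cdot\rvert^2)^tD^t_\alpha g\bigr]$ from Proposition~\ref{PDstchar}, Fubini, and then try to pass to the limit in the inner integral $\int_{r\mathbb B}f(x)\mathcal R_\alpha(y,x)\,d\nu_\alpha(x)\to f(y)$. The paper instead expands $f$ via $f=P_{\alpha+t}f$ and handles the resulting inner integral $\int_{r\mathbb B}\mathcal R_{\alpha+t}(x,y)g(x)\,d\nu_\alpha(x)$ with Lemma~\ref{LEstrB}.

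There is, however, a genuine gap in your dominated-convergence step. You assert that the inner integral is dominated, uniformly in $r$, by
\[
\int_{\mathbb B}\lvert f(x)\rvert\,\lvert\mathcal R_\alpha(y,x)\rvert\,d\nu_\alpha(x)
\;\lesssim\;\|f\|_{L^1_\alpha}\Bigl(1+\log\frac{1}{1-\lvert y\rvert^2}\Bigr),
\]
but this inequality is false. The logarithmic bound $1+\log\tfrac{1}{1-\lvert y\rvert^2}$ controls $\int_{\mathbb B}\lvert\mathcal R_\alpha(y,x)\rvert\,d\nu_\alpha(x)$, \emph{not} $\sup_x\lvert\mathcal R_\alpha(y,x)\rvert$; to pull out $\|f\|_{L^1_\alpha}$ you would need the latter, which is of size $(1-\lvert y\rvert)^{-(\alpha+n)}$. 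With that correct bound, your proposed dominating function becomes $\lesssim(1-\lvert y\rvert)^{-(\alpha+n)}$, which is not in $L^1_\alpha$ for any $t>0$. If instead you take $H(y)=(1-\lvert y\rvert^2)^t\lvert D^t_\alpha g(y)\rvert\int_{\mathbb B}\lvert f(x)\rvert\lvert\mathcal R_\alpha(y,x)\rvert\,d\nu_\alpha(x)$ and integrate in $y$ first (Tonelli), you get $\int H\,d\nu_\alpha\lesssim\int_{\mathbb B}\lvert f(x)\rvert\bigl(1+\log\tfrac{1}{1-\lvert x\rvert}\bigr)\,d\nu_\alpha(x)$, and this is \emph{not} known to be finite for $f\in\mathcal B^1_\alpha$; it is strictly stronger than $f\in L^1_\alpha$ and is not established anywhere in the paper.

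The paper's route avoids exactly this difficulty. By expanding $f$ with the kernel $\mathcal R_{\alpha+t}$, the inner integral becomes $(1-\lvert y\rvert^2)^t\int_{r\mathbb B}\mathcal R_{\alpha+t}(x,y)g(x)\,d\nu_\alpha(x)$, and Lemma~\ref{LEstrB} (which exploits the mean-value property of $\mathcal R_{\alpha+t}(\cdot,y)$ together with \eqref{BlochDif}, i.e., cancellation that an absolute-value estimate discards) bounds this by $C\|g\|_{\mathcal B}$ uniformly in $r$ and $y$. The dominating function is then simply $C\|g\|_{\mathcal B}\lvert f(y)\rvert\in L^1_\alpha$, and dominated convergence goes through cleanly.
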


\begin{proof}
Since the reproducing property in \eqref{Reproduce} holds also
in $\mathcal B^1_\alpha$ and
$f\in\mathcal B^1_\alpha\subset\mathcal B^1_{\alpha+t}$,
\begin{equation*}
\lim_{r\to 1^-}\int_{r\mathbb B}f(x)g(x)\,d\nu_\alpha(x)
=\lim_{r\to 1^-}\int_{r\mathbb B}
\int_{\mathbb B}\mathcal R_{\alpha+t}(x,y)f(y)\,d\nu_{\alpha+t}(y)
g(x)\,d\nu_{\alpha}(x),
\end{equation*}
which, after changing the order of the integrals (possible since for
$\lvert x\rvert\leq r$, the functions $\mathcal R_{\alpha+t}(x,y)$
and $g(x)$ are bounded), equals
\begin{equation*}
\lim_{r\to 1^-}\int_{\mathbb B} f(y)(1-\lvert y\rvert^2)^t
\int_{r\mathbb B}\mathcal R_{\alpha+t}(x,y) g(x)\,d\nu_{\alpha}(x)
\,d\nu_\alpha(y).
\end{equation*}
The term $(1-\lvert y\rvert^2)^t
\bigl\rvert\int_{r\mathbb B}\mathcal R_{\alpha+t}(x,y) g(x)
d\nu_{\alpha}(x)\bigr\rvert$
is bounded by a constant independent of $r$ and $y$ by
Lemma \ref{LEstrB} and the fact that $\mathcal R_{\alpha+t}$ is
symmetric.
Thus, by the dominated convergence theorem, we can push the
limit into the integral and obtain
\begin{equation*}
\lim_{r\to 1^-}\int_{r\mathbb B}f(x)g(x)\,d\nu_\alpha(x)
=\int_{\mathbb B} f(y)(1-\lvert y\rvert^2)^t
\int_{\mathbb B}\mathcal R_{\alpha+t}(x,y) g(x)\,d\nu_{\alpha}(x)
\,d\nu_\alpha(y).
\end{equation*}
This gives the desired result since the inner integral is $D^t_\alpha g(y)$
by Lemma \ref{LDstInt}(ii).
\end{proof}

\begin{proof}[Proof of Theorem \ref{Tdual}]
For $g\in\mathcal B$, define
$\Lambda_g\colon\mathcal B^1_\alpha\to\mathbb C$
by $\Lambda_g(f)=\langle f,g\rangle_\alpha$.
Pick some $t>0$.
By Lemma \ref{Lpairingabs} and Proposition \ref{PDstchar},
\begin{equation}\label{pairbound}
\lvert\langle f,g\rangle_\alpha\rvert
\leq \|f\|_{\mathcal B^1_\alpha}
\|(1-\lvert x\rvert^2)^tD^t_\alpha g\|_{L^\infty}
\lesssim \|f\|_{\mathcal B^1_\alpha}\|g\|_{\mathcal B},
\end{equation}
and so $\Lambda_g\in(\mathcal B^1_\alpha)^*$ and
$\|\Lambda_g\|\lesssim \|g\|_{\mathcal B}$.

Conversely, let $\Lambda\in(\mathcal B^1_\alpha)^*$.
Pick $\gamma>\alpha$.
Then, by Lemma \ref{Lproject}, $\Lambda\circ P_\gamma\in (L^1_\alpha)^*$
and by the Riesz representation theorem there exists $\psi\in L^\infty(\mathbb B)$
with $\|\psi\|_{L^\infty}=\|\Lambda\circ P_\gamma\|$ such that for all
$\phi\in L^1_\alpha(\mathbb B)$,
\begin{equation*}
(\Lambda\circ P_\gamma)\phi=\int_{\mathbb B}\phi(y)\psi(y)\,d\nu_\alpha(y).
\end{equation*}
When $\phi=f\in\mathcal B^1_\alpha$, we have $P_\gamma f=f$ and so
\begin{align}
\Lambda(f)
&=\int_{\mathbb B}P_\gamma f(y)\psi(y)\,d\nu_\alpha(y)
=\int_{\mathbb B}\int_{\mathbb B}\mathcal R_\gamma(y,x)f(x)\,d\nu_\gamma(x)
\psi(y)\,d\nu_\alpha(y)\notag\\
&=\int_{\mathbb B}f(x)\int_{\mathbb B}\mathcal R_\gamma(y,x)\psi(y)\,d\nu_\alpha(y)
\,d\nu_\gamma(x)\label{dualeq},
\end{align}
where we can change the order of the integrals because $\psi\in L^\infty$, and
by Lemma \ref{LEstR}(a) and Lemma \ref{LIntonSandB},
$\int_{\mathbb B}\lvert \mathcal R_\gamma(y,x)\rvert\,d\nu_\alpha(y)
\lesssim (1-\lvert x\rvert^2)^{-(\gamma-\alpha)}$.
Let
\begin{equation*}
g(x):=P_\alpha\psi(x)
=\int_{\mathbb B}\mathcal R_\alpha(x,y)\psi(y)\,d\nu_\alpha(y).
\end{equation*}
By Theorem \ref{Tproj}, $g$ is in $\mathcal B$ and
$\|g\|_{\mathcal B}\lesssim \|\psi\|_{L^\infty}=\|\Lambda\circ P_\gamma\|
\lesssim \|\Lambda\|$.
Further, by Lemma \ref{LDstInt}(i),
\begin{equation*}
D^{\gamma-\alpha}_\alpha g(x)
=D^{\gamma-\alpha}_\alpha
\int_{\mathbb B}\mathcal R_\alpha(x,y)\psi(y)\,d\nu_\alpha(y)
=\int_{\mathbb B}\mathcal R_\gamma(x,y)\psi(y)\,d\nu_\alpha(y).
\end{equation*}
Hence by \eqref{dualeq} and the symmetry of
$\mathcal R_\gamma$,
\begin{equation*}
\Lambda(f)=\int_{\mathbb B}f(x)
(1-\lvert x\rvert^2)^{\gamma-\alpha}D^{\gamma-\alpha}_\alpha g(x)
\,d\nu_\alpha(x),
\end{equation*}
which shows that $\Lambda=\Lambda_g$.

To see the uniqueness of $g$, note that for $x_0\in\mathbb B$,
$\mathcal R_\alpha(x_0,\cdot)$ is bounded on $\mathbb B$ and so
belongs to $\mathcal B^1_\alpha$.
In addition, if $g\in\mathcal B$, then $\mathcal R_\alpha(x_0,\cdot)g$
is in $L^1_\alpha(\mathbb B)$ by \eqref{Blochpoint}.
Thus
\begin{align}\label{uniq}
\begin{split}
\Lambda_g(\mathcal R_{\alpha}(x_0,\cdot))
&=\langle \mathcal R_{\alpha}(x_0,\cdot), g\rangle_\alpha
=\lim_{r\to 1^-}\int_{r\mathbb B}
\mathcal R_{\alpha}(x_0,x)g(x)d\nu_\alpha(x)\\
&=\int_{\mathbb B} \mathcal R_{\alpha}(x_0,x)g(x)d\nu_\alpha(x)
=g(x_0),
\end{split}
\end{align}
by the reproducing property.
Hence, if $g_1\neq g_2$, then $\Lambda_{g_1}\neq\Lambda_{g_2}$.
We conclude that to each $\Lambda\in(\mathcal B^1_\alpha)^*$,
there corresponds a unique $g\in\mathcal B$ with
$\|g\|_{\mathcal B}\sim\|\Lambda\|$ and $\Lambda=\Lambda_g$.

We next show that $\mathcal B_0^*$ can be identified with
$\mathcal B^1_\alpha$ for any $\alpha>-1$.
For $f\in\mathcal B^1_\alpha$, define
$\Lambda_f\colon\mathcal B_0\to\mathbb C$ by
$\Lambda_f(g)=\langle f,g\rangle_\alpha$.
By \eqref{pairbound}, $\Lambda_f\in\mathcal B_0^*$ and
$\|\Lambda_f\|\lesssim\|f\|_{\mathcal B^1_\alpha}$.
Suppose now that $\Lambda\in\mathcal B_0^*$.
By Theorem \ref{Tproj}, $\Lambda\circ P_\alpha\in C_0(\mathbb B)^*$
and by the Riesz representation theorem, there exists a
complex Borel measure $\mu$ on $\mathbb B$ with
$\lvert\mu\rvert(\mathbb B)=\|\Lambda\circ P_\alpha\|$ such that
for all $\phi\in C_0(\mathbb B)$,
\begin{equation*}
(\Lambda\circ P_\alpha)\phi=\int_{\mathbb B}\phi(y)\,d\mu(y).
\end{equation*}
Pick a $t>0$.
For $g\in\mathcal B_0$, let
$\phi(x)=(1-\lvert x\rvert^2)^tD^t_\alpha g(x)$.
Then by Proposition \ref{PDstchar}, $\phi\in C_0(\mathbb B)$ with
$\|\phi\|_{L^\infty}\sim \|g\|_{\mathcal B}$ and
$P_\alpha\phi=g$.
Thus
\begin{equation*}
\Lambda(g)=(\Lambda\circ P_\alpha)\phi
=\int_{\mathbb B}\phi(y)\,d\mu(y)
=\int_{\mathbb B}(1-\lvert y\rvert^2)^tD^t_\alpha g(y)\,d\mu(y).
\end{equation*}
Further, since $D^t_\alpha g\in L^1_{\alpha+t}$, by the reproducing
property, $D^t_\alpha g=P_{\alpha+t}(D^t_\alpha g)$.
Inserting this into the above equation and using the symmetry of
$\mathcal R_{\alpha+t}$ we obtain
\begin{align}\label{dual2}
\begin{split}
\Lambda(g)&=\int_{\mathbb B}(1-\lvert y\rvert^2)^t
\int_{\mathbb B}\mathcal R_{\alpha+t}(y,x)D^t_\alpha g(x)
\,d\nu_{\alpha+t}(x)\,d\mu(y)\\
&=\int_{\mathbb B}
\int_{\mathbb B}\mathcal R_{\alpha+t}(x,y)(1-\lvert y\rvert^2)^t\,d\mu(y)
(1-\lvert x\rvert^2)^t D^t_\alpha g(x)\,d\nu_{\alpha}(x),
\end{split}
\end{align}
where we can change the order of the integrals since
$(1-\lvert x\rvert^2)^t D^t_\alpha g(x)$ is bounded, and
by Lemma \ref{LEstR}(a) and Lemma \ref{LIntonSandB},
$\int_{\mathbb B}\lvert \mathcal R_{\alpha+t}(y,x)\rvert\,d\nu_\alpha(x)
\lesssim (1-\lvert y\rvert^2)^{-t}$.
Let
\begin{equation*}
f(x):=\int_{\mathbb B}\mathcal R_{\alpha+t}(x,y)
(1-\lvert y\rvert^2)^t\,d\mu(y).
\end{equation*}
Then $f\in\mathcal H(\mathbb B)$ and by Fubini's theorem and
the estimate in the previous line,
\begin{equation*}
\|f\|_{L^1_\alpha}
\leq\int_{\mathbb B}(1-\lvert y\rvert^2)^t\int_{\mathbb B}
\lvert\mathcal R_{\alpha+t}(x,y)\rvert
\,d\nu_\alpha(x)\,d\lvert\mu\rvert(y)
\lesssim \lvert\mu\rvert(\mathbb B).
\end{equation*}
Thus $f\in\mathcal B^1_\alpha$ with
$\|f\|_{\mathcal B^1_\alpha}\lesssim\lvert\mu\rvert(\mathbb B)
=\|\Lambda\circ P_\alpha\|\lesssim\|\Lambda\|$, and $\Lambda=\Lambda_f$
by \eqref{dual2}.
Uniqueness of $f$ can be verified in the same way as the previous part.
We note that for $x_0\in\mathbb B$,
$\mathcal R_\alpha(x_0,\cdot)$ is in $\mathcal B_0$
because $\lvert\nabla\mathcal R_\alpha(x_0,\cdot)\rvert$
is bounded by Lemma \ref{LEstR}(b).
Using also the fact that $\mathcal R_{\alpha}(x_0,\cdot)$ is bounded,
we see that $f\mathcal R_{\alpha}(x_0,\cdot)$ is in $L^1_\alpha$ and
obtain
$\langle f,\mathcal R_{\alpha}(x_0,\cdot)\rangle_\alpha=f(x_0)$
as in \eqref{uniq}.
\end{proof}

We finish this section by verifying that there exists an unbounded
$\mathcal H$-harmonic Bloch function.

\begin{lemma}\label{Lunbdd}
There exists an unbounded function in $\mathcal B$.
\end{lemma}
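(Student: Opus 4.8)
The plan is to exhibit an explicit candidate built from the building blocks $S_m(\lvert x\rvert)q_m(x)$ and to control its Bloch seminorm and its growth using the fractional-operator characterization of $\mathcal B$ from Proposition \ref{PDstchar}, rather than attempting to estimate $(1-\lvert x\rvert^2)\lvert\nabla f\rvert$ directly. Fix $s>-1$ and $t>0$. By Proposition \ref{PDstchar}(i), a function $f=\sum_m S_m(\lvert x\rvert)q_m(x)\in\mathcal H(\mathbb B)$ lies in $\mathcal B$ iff $(1-\lvert x\rvert^2)^tD^t_sf(x)\in L^\infty(\mathbb B)$, and $\|f\|_{\mathcal B}\sim\|(1-\lvert x\rvert^2)^tD^t_sf\|_{L^\infty}$. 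So it suffices to produce a bounded $\phi$ with $P_s\phi$ unbounded; equivalently, by \eqref{PsIst}, to find coefficients making $(1-\lvert x\rvert^2)^tD^t_sf$ bounded while $f$ itself is unbounded.

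First I would take a \emph{single} zonal term. Fix $\eta\in\mathbb S$ and consider, for a sequence of exponents, $f(x)=\sum_{m=0}^\infty b_m S_m(\lvert x\rvert)Z_m(x,\eta)$ for suitable real $b_m\geq 0$; then $D^t_sf(x)=\sum_m b_m(c_m(s+t)/c_m(s))S_m(\lvert x\rvert)Z_m(x,\eta)$. Since $c_m(s+t)/c_m(s)\sim m^t$ by \eqref{cmasym}, $1\leq S_m\leq Cm^{n/2-1}$ by \eqref{Smbound}, and $\lvert Z_m(x,\eta)\rvert\lesssim\lvert x\rvert^m m^{n-2}$ by \eqref{Zmless}, the natural choice is $b_m\sim m^{-t-(3n/2)+3}$ (or any slightly smaller power-like weight), which makes the $D^t_sf$ series dominated by $\sum_m \lvert x\rvert^m$, hence $\lvert D^t_sf(x)\rvert\lesssim 1/(1-\lvert x\rvert)\lesssim 1/(1-\lvert x\rvert^2)$, so $(1-\lvert x\rvert^2)^tD^t_sf(x)$ is bounded provided $t\geq 1$ — choose, say, $t=1$. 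It then remains to check that $f$ itself is \emph{unbounded}. For this I evaluate along the ray $x=\tau\eta$, $\tau\to 1^-$: using $Z_m(\tau\eta,\eta)=\tau^mZ_m(\eta,\eta)=\tau^m\operatorname{dim}H_m\sim\tau^m m^{n-2}$ and $S_m(\tau)\geq 1$, we get $f(\tau\eta)\gtrsim\sum_m b_m\tau^m m^{n-2}\sim\sum_m \tau^m m^{-t-(n/2)+1}$, and with $t=1$ the exponent $-t-(n/2)+1=-n/2<0$; a power series $\sum m^{-\beta}\tau^m$ with $0<\beta$ diverges to $+\infty$ as $\tau\to1^-$ precisely when $\beta\leq 1$, so for $n=2$ this already works, and for $n\geq 3$ one instead picks $t$ large and $b_m$ as large as the $D^t_sf$-boundedness permits so that the resulting exponent on the ray stays $\leq 1$ — concretely, matching the two constraints forces a relation one verifies is consistent, e.g. taking $b_m=m^{-a}$ with $a$ chosen in the (nonempty) window dictated by \eqref{Smbound}, \eqref{Zmless}, and the divergence threshold $1$.

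The cleanest way to avoid bookkeeping with the $n$-dependent exponents is a Hadamard-gap construction: let $f(x)=\sum_{k=1}^\infty 2^{-k}\, 2^{k(t+ (3n/2)-3)}\,\big(\text{something}\big)$ — more simply, set $m_k=2^k$ and $f(x)=\sum_{k=1}^\infty \lambda_k S_{m_k}(\lvert x\rvert)Z_{m_k}(x,\eta)$ with $\lambda_k$ chosen so that $\lambda_k m_k^t\sup_{\mathbb B}\big(S_{m_k}\lvert Z_{m_k}(\cdot,\eta)\rvert\big)\asymp 1$ after multiplying by $(1-\lvert x\rvert^2)$; lacunarity makes the $L^\infty$-estimate for $(1-\lvert x\rvert^2)D^t_sf$ immediate (the terms have essentially disjoint ``active'' annuli $1-\lvert x\rvert\asymp 2^{-k}$), while on the ray $x=\tau\eta$ the partial sum up to $k$ at $\tau=1-2^{-k}$ is $\gtrsim\sum_{j\le k}\lambda_j 2^{j(n-2)}\asymp\sum_{j\le k}1\asymp k\to\infty$, giving unboundedness. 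I would also remark that $f\in\mathcal B\setminus\mathcal B_0$ automatically (the $D^t_sf$ weighted function does not vanish at the boundary along the ray), which parallels the holomorphic $\log 1/(1-z_1)$ example mentioned in the introduction. The main obstacle is purely arithmetic: ensuring the single band of exponents $n,t$ simultaneously allows the multiplier weight $m^t$, the spherical-harmonic dimension growth $m^{n-2}$, and the factor $S_m\lesssim m^{n/2-1}$ to be absorbed by a coefficient sequence that is still ``fat enough'' to force logarithmic (hence unbounded) growth of $f$ on a radius — the gap series sidesteps this by decoupling the two estimates, so I expect the lacunary version to be the one actually written out.
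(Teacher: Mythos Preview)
Your arithmetic doesn't close for $n\geq 3$, and the lacunary variant does not rescue it. The obstruction is the gap between the two bounds in \eqref{Smbound}: to control $(1-\lvert x\rvert^2)^tD^t_sf$ term-by-term you must use $S_m\lesssim m^{n/2-1}$, whereas on the ray you only have $S_m\geq 1$. Concretely, with $b_m$ (or $\lambda_k$) chosen so that the $m$th term of $(1-\lvert x\rvert^2)^tD^t_sf$ is $O(1)$, the crude bound forces $b_m\, m^{t}\, m^{n/2-1}\, m^{n-2}\lesssim m^{t}$, i.e.\ $b_m\lesssim m^{3-3n/2}$; but unboundedness along $x=\tau\eta$ requires $\sum_m b_m m^{n-2}=\infty$, i.e.\ $b_m\gtrsim m^{-(n-2)}$ (up to summability). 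These two windows overlap only when $3-3n/2\geq 2-n$, that is $n\leq 2$. Your lacunary calculation assumes $\lambda_j 2^{j(n-2)}\asymp 1$ for the ray while simultaneously absorbing the factor $m_k^{n/2-1}$ from $S_{m_k}$ in the $L^\infty$ estimate; for $n\geq 3$ you cannot do both, so the claim ``lacunarity makes the $L^\infty$-estimate immediate'' is where the argument breaks.

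The paper avoids term-by-term estimation for the Bloch bound altogether. It takes $\phi(x)=(1-\lvert x\rvert^2)^{n-1}P_h(x,e_1)=(1-\lvert x\rvert^2)^{2(n-1)}/\lvert x-e_1\rvert^{2(n-1)}$, which is \emph{manifestly} in $L^\infty(\mathbb B)$ by the closed form of the Poisson kernel (no use of \eqref{Smbound} at all), so $f:=P_0\phi\in\mathcal B$ by Theorem~\ref{Tproj}. The series expansion is invoked only for the \emph{lower} bound on the ray: via Lemma~\ref{LDstInt}(ii), $f=D^{-(n-1)}_{n-1}P_h(\cdot,e_1)=\sum_m\bigl(c_m(0)/c_m(n-1)\bigr)S_m(\lvert x\rvert)Z_m(x,e_1)$, and on $x=re_1$ every term is positive with $c_m(0)/c_m(n-1)\sim m^{-(n-1)}$, $Z_m(re_1,e_1)\sim r^m m^{n-2}$, $S_m\geq 1$, giving $f(re_1)\gtrsim\sum_{m\geq 1} r^m/m\to\infty$. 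The key idea you are missing is that the closed form of $P_h$ provides the sharp $L^\infty$ control that the bound $S_m\lesssim m^{n/2-1}$ is too weak to recover.
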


\begin{proof}
Let $e_1=(1,0,\dots,0)\in\mathbb S$ and
$\phi(x)=(1-\lvert x\rvert^2)^{n-1}P_h(x,e_1)$, where $P_h$ is the
hyperbolic Poisson kernel in \eqref{Poisson}.
It is obvious that $\phi\in L^{\infty}(\mathbb B)$ and so the
Bergman projection
\begin{equation*}
f(x):=P_0\phi(x)
=\int_{\mathbb B}\mathcal R_0(x,y)P_h(y,e_1)
(1-\lvert y\rvert^2)^{n-1}\,d\nu(y)
\end{equation*}
is in $\mathcal B$ by Theorem \ref{Tproj}.
To see that $f$ is unbounded we find its series expansion.
Note that by the integral representation of $D^t_s$ in Lemma \ref{LDstInt}(ii)
(with $s=n-1$ and $t=-(n-1)$) we have $f(x)=D^{-(n-1)}_{n-1} P_h(x,e_1)$.
Therefore, by the series expansion of $P_h$ in \eqref{Poisson},
\begin{equation*}
f(x)=D^{-(n-1)}_{n-1} P_h(x,e_1)
=\sum_{m=0}^\infty\frac{c_m(0)}{c_m(n-1)}S_m(\lvert x\rvert)Z_m(x,e_1).
\end{equation*}
Observe that when $x=re_1$, all the terms in the above series are positive.
We have $Z_m(re_1,e_1)=r^mZ_m(e_1,e_1)\sim r^mm^{n-2}$ $(m\geq 1)$,
$c_m(0)/c_m(n-1)\sim m^{-(n-1)}$ by \eqref{cmasym}, and $S_m(r)\geq 1$
by \eqref{Smbound}.
Thus
\begin{equation*}
f(re_1)\gtrsim 1+\sum_{m=1}^\infty\frac{r^m}{m}
\end{equation*}
which tends to $\infty$ as $r\to 1^-$.
\end{proof}

\section{Atomic Decomposition}

Throughout the section we employ Proposition \ref{PDstchar}
and use any one of the equivalent norms
$\|(1-\lvert x\rvert^2)^t D^t_\alpha f(x)\|_{L^\infty}$
$(\alpha>-1, t>0)$ for the Bloch space $\mathcal B$.

\begin{lemma}\label{LkernelinB}
For every $\alpha>-1$ and $a\in\mathbb B$, the kernel
$\mathcal R_\alpha(\cdot,a)$ is in $\mathcal B_0$.
In addition, there exists $C=C(n,\alpha)>0$ such that
for all $a\in\mathbb B$,
\begin{equation}\label{NormR}
\frac{1}{C\,(1-\lvert a\rvert^2)^{\alpha+n}}
\leq \|\mathcal R_\alpha(\cdot,a)\|_{\mathcal B}
\leq \frac{C}{(1-\lvert a\rvert^2)^{\alpha+n}}.
\end{equation}
\end{lemma}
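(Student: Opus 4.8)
The plan is to push everything onto the shifted kernel $\mathcal R_{\alpha+t}$ via the operators $D^t_s$, and then exploit the pointwise estimate of Lemma \ref{LEstR}(a) together with the series expansion \eqref{KernelExp}. Fix $\alpha>-1$, $a\in\mathbb B$, and choose any $t>0$. Since $\mathcal R_\alpha(\cdot,a)\in\mathcal H(\mathbb B)$, formula \eqref{DstRs} gives $D^t_\alpha\mathcal R_\alpha(\cdot,a)(x)=\mathcal R_{\alpha+t}(x,a)$, so the whole lemma reduces to understanding the single function $x\mapsto(1-\lvert x\rvert^2)^t\mathcal R_{\alpha+t}(x,a)$: once we show it lies in $L^\infty(\mathbb B)$ (resp.\ $C_0(\mathbb B)$), Proposition \ref{PDstchar} gives $\mathcal R_\alpha(\cdot,a)\in\mathcal B$ (resp.\ $\mathcal B_0$) together with $\|\mathcal R_\alpha(\cdot,a)\|_{\mathcal B}\sim\|(1-\lvert x\rvert^2)^t\mathcal R_{\alpha+t}(x,a)\|_{L^\infty}$, with constants depending only on $n,\alpha,t$.

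For the $\mathcal B_0$-membership and the upper bound in \eqref{NormR} I would use Lemma \ref{LEstR}(a) together with the elementary consequences of \eqref{xybig} that $[x,a]\geq 1-\lvert x\rvert\lvert a\rvert\geq\tfrac12(1-\lvert x\rvert^2)$ and $[x,a]\geq\tfrac12(1-\lvert a\rvert^2)$. Splitting $[x,a]^{\alpha+t+n}=[x,a]^t[x,a]^{\alpha+n}\gtrsim(1-\lvert x\rvert^2)^t(1-\lvert a\rvert^2)^{\alpha+n}$ (all exponents are positive, as $\alpha+n>0$ and $t>0$) yields
\begin{equation*}
(1-\lvert x\rvert^2)^t\lvert\mathcal R_{\alpha+t}(x,a)\rvert
\leq\frac{C(1-\lvert x\rvert^2)^t}{[x,a]^{\alpha+t+n}}
\leq\frac{C}{(1-\lvert a\rvert^2)^{\alpha+n}}
\qquad(x\in\mathbb B),
\end{equation*}
which is exactly the desired upper bound. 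Moreover, for $a$ fixed, $\lvert\mathcal R_{\alpha+t}(x,a)\rvert\leq C[x,a]^{-(\alpha+t+n)}\leq C(1-\lvert a\rvert)^{-(\alpha+t+n)}$ is bounded on $\mathbb B$, so $(1-\lvert x\rvert^2)^t\mathcal R_{\alpha+t}(x,a)\to 0$ as $\lvert x\rvert\to 1^-$; since this function is also continuous on $\mathbb B$ (being $\mathcal H$-harmonic, hence $C^2$, in $x$), it belongs to $C_0(\mathbb B)$, and Proposition \ref{PDstchar}(ii) gives $\mathcal R_\alpha(\cdot,a)\in\mathcal B_0$.

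For the lower bound in \eqref{NormR} I would simply evaluate the supremum at the point $x=a$:
\begin{equation*}
\|(1-\lvert x\rvert^2)^t\mathcal R_{\alpha+t}(x,a)\|_{L^\infty}
\geq(1-\lvert a\rvert^2)^t\,\mathcal R_{\alpha+t}(a,a),
\end{equation*}
and bound $\mathcal R_{\alpha+t}(a,a)$ from below using \eqref{KernelExp}. Since $c_m(\alpha+t)>0$, $S_m\geq 1$, and $Z_m(a,a)=\lvert a\rvert^{2m}\dim H_m\geq 0$ (by homogeneity of $Z_m$ and $Z_m(\zeta,\zeta)=\dim H_m$), every term of the series is nonnegative; discarding finitely many terms and using $c_m(\alpha+t)\sim m^{\alpha+t+1}$ and $\dim H_m\sim m^{n-2}$ gives $\mathcal R_{\alpha+t}(a,a)\gtrsim\sum_m m^{\alpha+t+n-1}\lvert a\rvert^{2m}\sim(1-\lvert a\rvert^2)^{-(\alpha+t+n)}$ as $\lvert a\rvert\to 1^-$. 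Together with the bound $\mathcal R_{\alpha+t}(a,a)\geq c_0(\alpha+t)>0$ (the $m=0$ term), which controls the range where $\lvert a\rvert$ stays away from $1$, this yields $\mathcal R_{\alpha+t}(a,a)\gtrsim(1-\lvert a\rvert^2)^{-(\alpha+t+n)}$ uniformly in $a$, hence $(1-\lvert a\rvert^2)^t\mathcal R_{\alpha+t}(a,a)\gtrsim(1-\lvert a\rvert^2)^{-(\alpha+n)}$, completing the proof.

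The only non-formal step is the lower estimate of the diagonal value $\mathcal R_{\alpha+t}(a,a)$: here one invokes the standard power-series asymptotic $\sum_{m\geq 1}m^{\beta}r^{m}\sim(1-r)^{-(\beta+1)}$ (valid for $\beta>-1$, here with $\beta=\alpha+t+n-1>0$) and takes a little care to make the inequality uniform in $a\in\mathbb B$ rather than just an asymptotic as $\lvert a\rvert\to 1^{-}$. Everything else is a direct application of \eqref{DstRs}, Lemma \ref{LEstR}(a), the inequality \eqref{xybig}, and Proposition \ref{PDstchar}.
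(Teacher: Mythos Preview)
Your proof is correct and follows essentially the same route as the paper's: apply \eqref{DstRs} and Proposition \ref{PDstchar} to reduce everything to $(1-\lvert x\rvert^2)^t\mathcal R_{\alpha+t}(x,a)$, use Lemma \ref{LEstR}(a) together with \eqref{xybig} for the upper bound and the $C_0$-membership, and evaluate at $x=a$ for the lower bound. The only difference is that the paper invokes \cite[Lemma 6.1]{U1} for the diagonal estimate $\mathcal R_{\alpha+t}(a,a)\sim(1-\lvert a\rvert^2)^{-(\alpha+t+n)}$, whereas you reprove it inline from the series \eqref{KernelExp} using positivity of the terms and the asymptotics \eqref{cmasym}; your argument for this is sound.
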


\begin{proof}
Pick some $t>0$.
For fixed $a\in\mathbb B$, by \eqref{DstRs},
$D^t_\alpha\mathcal R_\alpha(x,a)=\mathcal R_{\alpha+t}(x,a)$
which is bounded by Lemma \ref{LEstR}(a) and the
inequality $[x,a]\geq 1-\lvert a\rvert$ by \eqref{xybig}.
Thus $(1-\lvert x\rvert^2)^t D^t_\alpha \mathcal R_\alpha(x,a)$
is in $C_0(\mathbb B)$ and
$\mathcal R_\alpha(x,a)\in\mathcal B_0$.
Further,
\begin{equation*}
(1-\lvert x\rvert^2)^t\bigl\lvert\mathcal R_{\alpha+t}(x,a)\bigr\rvert
\lesssim \frac{(1-\lvert x\rvert^2)^t}{[x,a]^{\alpha+t+n}}
\lesssim\frac{1}{(1-\lvert a\rvert^2)^{\alpha+n}},
\end{equation*}
again by $[x,a]\geq 1-\lvert x\rvert$
and $[x,a]\geq 1-\lvert a\rvert$, which gives the second inequality
in \eqref{NormR}.
The first inequality follows from \cite[Lemma 6.1]{U1}
which shows that when $x=a$,
$\mathcal R_{\alpha+t}(a,a)\sim1/(1-\lvert a\rvert^2)^{\alpha+t+n}$,
and the fact that $\mathcal R_{\alpha+t}(\cdot,a)$ is continuous.
\end{proof}

\begin{lemma}\label{LOperT}
Suppose $\alpha>-1$ and $\{a_m\}$ is $r$-separated for some $0<r<1$.
Then the operator $T=T_{\{a_m\},\alpha}\colon\ell^\infty\to\mathcal B$
mapping $\lambda=\{\lambda_m\}$ to
\begin{equation*}
T\lambda(x)=\sum_{m=1}^\infty\lambda_m
\frac{\mathcal R_\alpha(x,a_m)}
{\|\mathcal R_\alpha(\cdot,a_m)\|_{\mathcal B}}
\qquad (x\in\mathbb B),
\end{equation*}
is bounded.
The above series converges absolutely and uniformly on
compact subsets of $\mathbb B$.
In addition, if $\lambda\in c_0$, then $T\lambda\in\mathcal B_0$.
\end{lemma}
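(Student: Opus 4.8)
The plan is to estimate the series defining $T\lambda$ using the $D^t_\alpha$-characterization of the Bloch norm from Proposition \ref{PDstchar}. Fix some $t>0$. Since the operators $D^t_\alpha$ act term-by-term on the series expansion and commute with the uniformly convergent kernel expansion, we have $D^t_\alpha\mathcal R_\alpha(x,a_m)=\mathcal R_{\alpha+t}(x,a_m)$ by \eqref{DstRs}. Assuming for the moment that the series can be differentiated term by term (which I will justify via the uniform convergence established below), it suffices to bound
\begin{equation*}
(1-\lvert x\rvert^2)^t\sum_{m=1}^\infty\lvert\lambda_m\rvert
\frac{\lvert\mathcal R_{\alpha+t}(x,a_m)\rvert}
{\|\mathcal R_\alpha(\cdot,a_m)\|_{\mathcal B}}
\lesssim \|\lambda\|_{\ell^\infty}\,
(1-\lvert x\rvert^2)^t\sum_{m=1}^\infty
\frac{(1-\lvert a_m\rvert^2)^{\alpha+n}}{[x,a_m]^{\alpha+t+n}},
\end{equation*}
where I used Lemma \ref{LEstR}(a) for the numerator and Lemma \ref{LkernelinB} (the equivalence $\|\mathcal R_\alpha(\cdot,a_m)\|_{\mathcal B}\sim(1-\lvert a_m\rvert^2)^{-(\alpha+n)}$) for the denominator.

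The heart of the matter is therefore the estimate
\begin{equation*}
(1-\lvert x\rvert^2)^t\sum_{m=1}^\infty
\frac{(1-\lvert a_m\rvert^2)^{\alpha+n}}{[x,a_m]^{\alpha+t+n}}
\lesssim 1
\end{equation*}
uniformly in $x\in\mathbb B$, which is the standard discretization of Lemma \ref{LIntonSandB}. The mechanism: because $\{a_m\}$ is $r$-separated, the pseudo-hyperbolic balls $E_{r/2}(a_m)$ are pairwise disjoint, each has $\nu$-measure comparable to $(1-\lvert a_m\rvert^2)^n$ by \eqref{Ercenter}, and on $E_{r/2}(a_m)$ the quantities $1-\lvert y\rvert^2$ and $[x,y]$ are comparable to $1-\lvert a_m\rvert^2$ and $[x,a_m]$ respectively by Lemma \ref{Lratiobracket}. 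Hence each term of the sum is bounded by a constant times $\int_{E_{r/2}(a_m)}(1-\lvert y\rvert^2)^{\alpha}[x,y]^{-(\alpha+t+n)}\,d\nu(y)$, and summing over the disjoint regions gives at most $\int_{\mathbb B}(1-\lvert y\rvert^2)^{\alpha}[x,y]^{-(\alpha+t+n)}\,d\nu(y)\lesssim (1-\lvert x\rvert^2)^{-t}$ by Lemma \ref{LIntonSandB} with $s=\alpha$ and exponent $t>0$. This yields $\|T\lambda\|_{\mathcal B}\lesssim\|\lambda\|_{\ell^\infty}$.

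For the convergence claims: the bound above applied to tails shows the series $\sum\lambda_m(1-\lvert a_m\rvert^2)^{\alpha+n}\mathcal R_{\alpha+t}(x,a_m)$, and similarly the original series for $T\lambda$, converge absolutely and uniformly on each compact $K\subset\mathbb B$ (on such $K$ one has $[x,a_m]\gtrsim 1$ and $1-\lvert x\rvert^2\gtrsim 1$, so the geometric-type decay is uniform); this also legitimizes the term-by-term application of $D^t_\alpha$ and shows $T\lambda\in\mathcal H(\mathbb B)$. Finally, for $\lambda\in c_0$: split the sum at index $N$. The tail $\sum_{m>N}$ contributes, by the same disjointness argument restricted to $\bigcup_{m>N}E_{r/2}(a_m)$, at most $C\sup_{m>N}\lvert\lambda_m\rvert$ to $(1-\lvert x\rvert^2)^t\lvert D^t_\alpha T\lambda(x)\rvert$, which is small. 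The head $\sum_{m\le N}\lambda_m(1-\lvert x\rvert^2)^t\mathcal R_{\alpha+t}(x,a_m)/\|\mathcal R_\alpha(\cdot,a_m)\|_{\mathcal B}$ is a finite sum of functions each lying in $C_0(\mathbb B)$ by Lemma \ref{LkernelinB}, hence tends to $0$ as $\lvert x\rvert\to 1^-$. Therefore $\limsup_{\lvert x\rvert\to1^-}(1-\lvert x\rvert^2)^t\lvert D^t_\alpha T\lambda(x)\rvert\le C\sup_{m>N}\lvert\lambda_m\rvert$ for every $N$, which is $0$; so $(1-\lvert x\rvert^2)^tD^t_\alpha T\lambda\in C_0(\mathbb B)$ and $T\lambda\in\mathcal B_0$ by Proposition \ref{PDstchar}(ii). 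The main obstacle is the discretized integral estimate, but it is routine given Lemma \ref{LIntonSandB} and the comparability lemmas; the only care needed is tracking that the $r$-separation constant does not enter the final bound except through an overall multiplicative constant.
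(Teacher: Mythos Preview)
Your proposal is correct and follows essentially the same route as the paper: the discretization of the integral estimate in Lemma~\ref{LIntonSandB} via the disjoint balls $E_{r/2}(a_m)$, combined with Lemma~\ref{LEstR}(a), Lemma~\ref{LkernelinB}, and the $D^t_\alpha$-characterization from Proposition~\ref{PDstchar}. The paper organizes the argument in a slightly different order---first establishing $\sum_m(1-\lvert a_m\rvert^2)^{\alpha+n}<\infty$ and uniform convergence on compacts, then invoking the continuity of $D^t_\alpha$ (Lemma~\ref{LDstcont}) to justify term-by-term differentiation, then the boundedness estimate---but the substance is identical.
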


\begin{proof}
We first verify that
$\sum_{m=1}^\infty (1-\lvert a_m\rvert^2)^{\alpha+n}<\infty$.
To see this, note that the balls $E_{r/2}(a_m)$ are disjoint,
and for fixed $r$,
$\nu(E_{r/2}(a_m))\sim (1-\lvert a_m\rvert^2)^n$ by \eqref{Ercenter}.
Also, for $y\in E_{r/2}(a_m)$, we have
$(1-\lvert y\rvert^2)\sim(1-\lvert a_m\rvert^2)$ by
Lemma \ref{Lratiobracket}(i).
Thus
\begin{equation*}
\sum_{m=1}^\infty (1-\lvert a_m\rvert^2)^{\alpha+n}
\sim\sum_{m=1}^\infty
\int_{E_{r/2}(a_m)}(1-\lvert y\rvert^2)^\alpha\,d\nu(y)
\leq\int_{\mathbb B}(1-\lvert y\rvert^2)^\alpha\,d\nu(y)
<\infty.
\end{equation*}
To see that the series absolutely and uniformly converges
on compact subsets of $\mathbb B$, suppose $\lvert x\rvert\leq R<1$.
Then $\lvert\mathcal R_\alpha(x,a_m)\rvert\leq C$ for all $m$ by
Lemma \ref{LEstR} and \eqref{xybig}.
Using also Lemma \ref{LkernelinB}, we obtain
\begin{equation*}
\sum_{m=1}^\infty\lvert\lambda_m\rvert
\frac{\lvert\mathcal R_\alpha(x,a_m)\rvert}
{\|\mathcal R_\alpha(\cdot,a_m)\|_{\mathcal B}}
\lesssim\|\lambda\|_{\ell^\infty}
\sum_{m=1}^\infty (1-\lvert a_m\rvert^2)^{\alpha+n}
<\infty.
\end{equation*}

Next, we pick some $t>0$.
By the continuity of $D^t_\alpha$ in Lemma \ref{LDstcont},
we can push $D^t_\alpha$ into the series and using
\eqref{DstRs}, Lemma \ref{LEstR}(a), Lemma \ref{LkernelinB},
and the fact that $[x,y]\sim[x,a_m]$ for $y\in E_{r/2}(a_m)$
by Lemma \ref{Lratiobracket}, we obtain
\begin{align*}
\big\lvert D^t_\alpha(T\lambda)(x)\big\rvert
&\lesssim\|\lambda\|_{\ell^\infty}\sum_{m=1}^\infty
\frac{(1-\lvert a_m\rvert^2)^{\alpha+n}}
{[x,a_m]^{\alpha+t+n}}
\sim\|\lambda\|_{\ell^\infty}\sum_{m=1}^\infty
\int_{E_{r/2}(a_m)}
\frac{(1-\lvert y\rvert^2)^\alpha}{[x,y]^{\alpha+t+n}}d\nu(y)\\
&\leq\|\lambda\|_{\ell^\infty}
\int_{\mathbb B}
\frac{(1-\lvert y\rvert^2)^\alpha}{[x,y]^{\alpha+t+n}}d\nu(y)
\lesssim\frac{\|\lambda\|_{\ell^\infty}}{(1-\lvert x\rvert^2)^t},
\end{align*}
where in the last inequality we use Lemma \ref{LIntonSandB}.
Hence $T\lambda\in\mathcal B$ and
$\|T\lambda\|_{\mathcal B}\lesssim\|\lambda\|_{\ell^\infty}$.

Finally, suppose $\lambda\in c_0$.
For $\varepsilon>0$, let $M$ be such that
$\sup_{m\geq M}\lvert\lambda_m\rvert<\varepsilon$.
Then
\begin{equation*}
T\lambda(x)
=\sum_{m=1}^{M-1}\lambda_m
\frac{\mathcal R_\alpha(x,a_m)}
{\|\mathcal R_\alpha(\cdot,a_m)\|_{\mathcal B}}
+\sum_{m=M}^\infty\lambda_m
\frac{\mathcal R_\alpha(x,a_m)}
{\|\mathcal R_\alpha(\cdot,a_m)\|_{\mathcal B}}
=:h_1(x)+h_2(x).
\end{equation*}
By Lemma \ref{LkernelinB}, $h_1$ is in $\mathcal B_0$,
and by the previous paragraph
$\|h_2\|_{\mathcal B}\lesssim\sup_{m\geq M}\lvert\lambda_m\rvert$.
Thus
$\limsup_{\lvert x\rvert\to 1^-}
(1-\lvert x\rvert^2)^t
\lvert D^t_\alpha(T\lambda)(x)\rvert\lesssim\varepsilon$
and $T\lambda$ is in $\mathcal B_0$.
\end{proof}

Following \cite{CR}, we associate with an $r$-lattice $\{a_m\}$
the following disjoint partition $\{E_m\}$ of $\mathbb B$.
Let $E_1=E_r(a_1)\backslash\bigcup_{m=2}^\infty E_{r/2}(a_m)$ and
for $m=2,3,\dots$, inductively define
\begin{equation*}
E_m=E_r(a_m)\backslash
\biggl(\bigcup_{k=1}^{m-1}E_k\,\bigcup\bigcup_{k=m+1}^\infty E_{r/2}(a_k)\biggr).
\end{equation*}
The following properties hold:
(i) $E_{r/2}(a_m)\subset E_m\subset E_r(a_m)$,
(ii) The sets $E_m$ are disjoint,
(iii) $\bigcup_{m=1}^\infty E_m=\mathbb B$.

\begin{lemma}\label{LOperU}
Suppose $\alpha>-1$, $t>0$, and $\{a_m\}$ is an $r$-lattice.
If $\{E_m\}$ is the associated sequence defined above, then
the operator
$U=U_{\{a_m\},\alpha,t}\colon\mathcal B\to\ell^\infty$,
\begin{equation*}
Uf=\Big\{D^t_\alpha f(a_m)\,\|\mathcal R_\alpha(\cdot,a_m)\|_{\mathcal B}
\,\nu_{\alpha+t}(E_m)\Big\}_{m=1}^\infty
\end{equation*}
is bounded.
In addition, if $f\in\mathcal B_0$, then $Uf\in c_0$.
\end{lemma}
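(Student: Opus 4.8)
The plan is to estimate the $\ell^\infty$-norm of $Uf$ directly using the characterization of $\mathcal{B}$ from Proposition \ref{PDstchar}, namely that $\|(1-\lvert x\rvert^2)^t D^t_\alpha f(x)\|_{L^\infty}\sim\|f\|_{\mathcal B}$. First I would fix $m$ and bound the $m$-th component of $Uf$. By Lemma \ref{LkernelinB}, $\|\mathcal R_\alpha(\cdot,a_m)\|_{\mathcal B}\sim(1-\lvert a_m\rvert^2)^{-(\alpha+n)}$, and by \eqref{Ercenter} together with Lemma \ref{Lratiobracket}(i), $\nu_{\alpha+t}(E_m)\leq\nu_{\alpha+t}(E_r(a_m))\sim(1-\lvert a_m\rvert^2)^{\alpha+t+n}$ (since on $E_r(a_m)$ one has $1-\lvert y\rvert^2\sim 1-\lvert a_m\rvert^2$ and the Euclidean volume of $E_r(a_m)$ is $\sim(1-\lvert a_m\rvert^2)^n$). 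Hence
\begin{equation*}
\big\lvert D^t_\alpha f(a_m)\big\rvert\,\|\mathcal R_\alpha(\cdot,a_m)\|_{\mathcal B}\,\nu_{\alpha+t}(E_m)
\lesssim \big\lvert D^t_\alpha f(a_m)\big\rvert\,(1-\lvert a_m\rvert^2)^{t}
\leq \|(1-\lvert x\rvert^2)^tD^t_\alpha f(x)\|_{L^\infty}\lesssim\|f\|_{\mathcal B},
\end{equation*}
with the constant independent of $m$. Taking the supremum over $m$ gives $\|Uf\|_{\ell^\infty}\lesssim\|f\|_{\mathcal B}$, so $U$ is bounded.

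For the little Bloch part, suppose $f\in\mathcal B_0$. Then by Proposition \ref{PDstchar}(ii), $(1-\lvert x\rvert^2)^tD^t_\alpha f(x)\in C_0(\mathbb B)$, i.e. it tends to $0$ as $\lvert x\rvert\to1^-$. The estimate above shows the $m$-th term of $Uf$ is controlled by $\big\lvert(1-\lvert a_m\rvert^2)^tD^t_\alpha f(a_m)\big\rvert$ up to a uniform constant, so it suffices to observe that $\lvert a_m\rvert\to1$ as $m\to\infty$. This holds because $\{a_m\}$ is an $r$-lattice: only finitely many $a_m$ can lie in any compact subset of $\mathbb B$, since the separation condition $\rho(a_m,a_k)\geq r$ forces the balls $E_{r/2}(a_m)$ to be disjoint, and a compact subset of $\mathbb B$ has finite hyperbolic volume while each $E_{r/2}(a_m)$ meeting it has hyperbolic volume bounded below. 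Consequently, given $\varepsilon>0$, all but finitely many $a_m$ satisfy $(1-\lvert a_m\rvert^2)^t\lvert D^t_\alpha f(a_m)\rvert<\varepsilon$, whence $Uf\in c_0$.

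The bulk of the work is routine: the only slightly delicate points are the two-sided volume estimate $\nu_{\alpha+t}(E_r(a_m))\sim(1-\lvert a_m\rvert^2)^{\alpha+t+n}$ (which follows from the explicit center and radius in \eqref{Ercenter} and the comparability of $1-\lvert y\rvert^2$ with $1-\lvert a_m\rvert^2$ on pseudo-hyperbolic balls, Lemma \ref{Lratiobracket}(i)) and the fact that an $r$-lattice escapes to the boundary. Neither presents a real obstacle, so I do not anticipate a hard step; the proof is essentially a bookkeeping combination of Lemma \ref{LkernelinB}, Proposition \ref{PDstchar}, and the geometry of pseudo-hyperbolic balls.
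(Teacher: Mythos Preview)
Your proposal is correct and follows essentially the same route as the paper: both combine Lemma~\ref{LkernelinB} with the volume estimate $\nu_{\alpha+t}(E_m)\sim(1-\lvert a_m\rvert^2)^{\alpha+t+n}$ (from \eqref{Ercenter} and Lemma~\ref{Lratiobracket}) to reduce the $m$-th entry to $(1-\lvert a_m\rvert^2)^t\lvert D^t_\alpha f(a_m)\rvert$, then invoke Proposition~\ref{PDstchar}. Your extra justification that $\lvert a_m\rvert\to 1$ via the disjointness of the $E_{r/2}(a_m)$ is a welcome detail the paper simply asserts.
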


\begin{proof}
Because $E_{r/2}(a_m)\subset E_m\subset E_r(a_m)$
and $r$ is fixed, by Lemma \ref{Lratiobracket} and \eqref{Ercenter},
$\nu_{\alpha+t}(E_m)
\sim(1-\lvert a_m\rvert^2)^{\alpha+t+n}$.
Combining this with Lemma \ref{LkernelinB} shows
\begin{equation*}
\lvert D^t_\alpha f(a_m)\rvert\,
\|\mathcal R_\alpha(\cdot,a_m)\|_{\mathcal B}\,\nu_{\alpha+t}(E_m)
\sim (1-\lvert a_m\rvert^2)^t\lvert D^t_\alpha f(a_m)\rvert.
\end{equation*}
Thus
$\|Uf\|_{\ell^\infty}
\lesssim\|f\|_{\mathcal B}$.
If $f\in\mathcal B_0$,
then $(1-\lvert x\rvert^2)^tD^t_\alpha f(x)\in C_0(\mathbb B)$
and so $Uf$ is in $c_0$, since $\lim_{m\to\infty}\lvert a_m\rvert=1$.
\end{proof}

\begin{proof}[Proof of Theorem \ref{TAtomic}]
Pick some $t>0$ and define the operators
$U\colon\mathcal B\to\ell^\infty$
and $T\colon\ell^\infty\to\mathcal B$ as above.
We show that there exists a constant $C=C(n,\alpha,t)$ such that
$\|I-TU\|_{\mathcal B\to\mathcal B}\leq Cr$, where $I$ is the identity operator.
This implies that $\|I-TU\|_{\mathcal B\to\mathcal B}<1$
when $r$ is sufficiently small, $TU$ is invertible,
and hence $T$ is onto.
In the little Bloch case replacing $\mathcal B$ with $\mathcal B_0$
and $\ell^\infty$ with $c_0$, we obtain
$T\colon c_0\to\mathcal B_0$ is onto.

Let $f\in\mathcal B$.
In the calculations below we suppress constants that depend only on
$n,\alpha,t$, and be careful that they do not depend on $r$ or $f$.
Note that
\begin{equation*}
TUf(x)=\sum_{m=1}^\infty
D^t_\alpha f(a_m)\mathcal R_\alpha(x,a_m)\,\nu_{\alpha+t}(E_m)
\end{equation*}
and the series converges absolutely and uniformly on compact subsets
of $\mathbb B$.
By continuity we can push $D^t_\alpha$ into the series,
and using \eqref{DstRs} obtain
\begin{align*}
D^t_\alpha (TUf)(x)
&=\sum_{m=1}^\infty
D^t_\alpha f(a_m)\mathcal R_{\alpha+t}(x,a_m)\,\nu_{\alpha+t}(E_m)\\
&=\sum_{m=1}^\infty\int_{E_m}
D^t_\alpha f(a_m)\mathcal R_{\alpha+t}(x,a_m)\,\nu_{\alpha+t}(y).
\end{align*}
Further, since $D^t_\alpha f\in L^1_{\alpha+t}(\mathbb B)$, by the
reproducing property,
\begin{equation*}
D^t_\alpha f(x)
=\int_{\mathbb B}\mathcal R_{\alpha+t}(x,y)D^t_\alpha f(y)d\nu_{\alpha+t}(y)
=\sum_{m=1}^\infty
\int_{E_m}\mathcal R_{\alpha+t}(x,y)D^t_\alpha f(y)d\nu_{\alpha+t}(y).
\end{equation*}
Thus
\begin{align*}
D^t_\alpha(I-TU)f(x)
&=\sum_{m=1}^\infty\int_{E_m}
\bigl(\mathcal R_{\alpha+t}(x,y)-\mathcal R_{\alpha+t}(x,a_m)\bigr)
D^t_\alpha f(y)\,d\nu_{\alpha+t}(y)\\
&+\sum_{m=1}^\infty\int_{E_m}
\bigl(D^t_\alpha f(y)-D^t_\alpha f(a_m)\bigr)\mathcal R_{\alpha+t}(x,a_m)
\,d\nu_{\alpha+t}(y)\\
&=:h_1(x)+h_2(x).
\end{align*}

We first estimate $h_1$.
Pick $y\in E_m$.
Since $E_m\subset E_r(a_m)$, a convex set, by the mean value inequality
\begin{equation*}
\bigl\lvert\mathcal R_{\alpha+t}(x,y)-\mathcal R_{\alpha+t}(x,a_m)\bigr\rvert
\leq \lvert y-a_m\rvert
\sup_{z\in E_r(a_m)}\lvert\nabla_z\mathcal R_{\alpha+t}(x,z)\rvert.
\end{equation*}
By \eqref{phmetric}, $\lvert y-a_m\rvert=\rho(y,a_m)[y,a_m]$ and since
$\rho(y,a_m)<r<1/2$, we have $[y,a_m]\sim [y,y]=1-\lvert y\rvert^2$ by
Lemma \ref{Lratiobracket}(ii), with the suppressed constants not
depending on $r$.
Thus $\lvert y-a_m\rvert\lesssim r(1-\lvert y\rvert^2)$.
Similarly, for all $x\in\mathbb B$ and $z\in E_r(a_m)$, we have
$[x,z]\sim[x,a_m]\sim[x,y]$ by Lemma \ref{Lratiobracket}(ii).
Hence, by Lemma \ref{LEstR}(b),
\begin{equation*}
\bigl\lvert\nabla_z\mathcal R_{\alpha+t}(x,z)\bigr\rvert
\lesssim\frac{1}{[x,z]^{\alpha+t+n+1}}
\sim\frac{1}{[x,y]^{\alpha+t+n+1}}.
\end{equation*}
We conclude that for all $x\in\mathbb B$ and $y\in E_m$,
\begin{equation}\label{KernelDiff}
\bigl\lvert\mathcal R_{\alpha+t}(x,y)-\mathcal R_{\alpha+t}(x,a_m)\bigr\rvert
\lesssim r\frac{1-\lvert y\rvert^2}{[x,y]^{\alpha+t+n+1}}
\lesssim \frac{r}{[x,y]^{\alpha+t+n}},
\end{equation}
where in the last inequality we use $[x,y]\geq 1-\lvert y\rvert$ by
\eqref{xybig}.
Thus
\begin{align*}
\lvert h_1(x)\rvert
&\lesssim r\sum_{m=1}^\infty\int_{E_m}
\frac{(1-\lvert y\rvert^2)^{\alpha+t}\lvert D^t_\alpha f(y)\rvert}{[x,y]^{\alpha+t+n}}
\,d\nu(y)
\lesssim r\|f\|_{\mathcal B}
\int_{\mathbb B}\frac{(1-\lvert y\rvert^2)^{\alpha}}{[x,y]^{\alpha+t+n}}
\,d\nu(y)\\
&\lesssim r\|f\|_{\mathcal B}\frac{1}{(1-\lvert x\rvert^2)^t},
\end{align*}
where the last inequality follows from Lemma \ref{LIntonSandB}.

To estimate $h_2$, note first that by the reproducing property, we have
\begin{equation*}
D^t_\alpha f(y)-D^t_\alpha f(a_m)
=\int_{\mathbb B}
\bigl(\mathcal R_{\alpha+t}(y,z)-\mathcal R_{\alpha+t}(a_m,z)\bigr)
D^t_\alpha f(z)\,d\nu_{\alpha+t}(z).
\end{equation*}
Therefore, by \eqref{KernelDiff} with the symmetry of $\mathcal R_{\alpha+t}$,
and Lemma \ref{LIntonSandB},
\begin{equation*}
\lvert D^t_\alpha f(y)-D^t_\alpha f(a_m)\rvert
\lesssim r\int_{\mathbb B}
\frac{(1-\lvert z\rvert^2)^{\alpha+t}\lvert D^t_\alpha f(z)\rvert}{[y,z]^{\alpha+t+n}}
\,d\nu(z)
\lesssim r\|f\|_{\mathcal B}\frac{1}{(1-\lvert y\rvert^2)^t}.
\end{equation*}
Hence, using also Lemma \ref{LEstR}, the fact that $[x,a_m]\sim [x,y]$
for all $x\in\mathbb B$ and $y\in E_m$, and finally Lemma \ref{LIntonSandB},
we deduce
\begin{align*}
\lvert h_2(x)\rvert
&\lesssim r\|f\|_{\mathcal B}
\sum_{m=1}^\infty\int_{E_m}
\frac{(1-\lvert y\rvert^2)^\alpha}{[x,a_m]^{\alpha+t+n}}\,d\nu(y)
\sim r\|f\|_{\mathcal B}
\sum_{m=1}^\infty\int_{E_m}
\frac{(1-\lvert y\rvert^2)^\alpha}{[x,y]^{\alpha+t+n}}\,d\nu(y)\\
&=r\|f\|_{\mathcal B}\int_{\mathbb B}
\frac{(1-\lvert y\rvert^2)^\alpha}{[x,y]^{\alpha+t+n}}\,d\nu(y)
\lesssim r\|f\|_{\mathcal B}\frac{1}{(1-\lvert x\rvert^2)^t}.
\end{align*}
Thus,
$\|(1-\lvert x\rvert^2)^t D^t_\alpha(I-TU)f(x)\|_{L^\infty}
\leq Cr\|f\|_{\mathcal B}$
and the proof is completed.
\end{proof}

To see that the representation \eqref{Repr2} can be used instead
of \eqref{atomicrepr}, the only change needed is to replace
$\|\mathcal R_\alpha(\cdot,a_m)\|_{\mathcal B}$ with
$(1-\lvert a_m\rvert^2)^{-(\alpha+n)}$ in the definitions of
$T$ and $U$.
The proofs of the Lemmas \ref{LOperT} and \ref{LOperU} become simpler;
and $TU$ and the proof of Theorem \ref{TAtomic} remain the same.

\end{document}